\documentclass[a4paper,11pt,oneside]{article}

\usepackage{a4wide}
\usepackage[utf8]{inputenc}
\usepackage[T1]{fontenc}
\usepackage{textcomp}
\usepackage[english]{babel}
\usepackage{amsmath}
\usepackage{amssymb}
\usepackage{graphicx}
\usepackage{amsthm}
\usepackage{hyperref}
\usepackage{enumitem}
\usepackage{csquotes}
\usepackage{mathrsfs}
\usepackage{color}
\usepackage{bm}
\usepackage{mathtools}

\usepackage{pgfplots}
\pgfplotsset{compat=1.18}

\DeclareMathOperator{\R}{\mathbb{R}}

\DeclareMathOperator{\N}{\mathbb{N}}

\DeclareMathOperator{\Z}{\mathbb{Z}}
\DeclareMathOperator{\leb}{\mathcal{L}}

\DeclareMathOperator{\prob}{\mathbb{P}}

\newcommand{\coint}[1]{\left[{#1}\right)} 
\newcommand{\ccint}[1]{\left[{#1}\right]} 
\newcommand{\ooint}[1]{\left({#1}\right)} 
\newcommand{\ocint}[1]{\left({#1}\right]}

\newcommand{\modu}[1]{\left\vert{#1}\right\vert}
\newcommand{\iid}{  \stackrel{\mathrm{i.i.d.}}{\sim} }
\addto\captionsfrench{}

\theoremstyle{definition}
\newtheorem{defi}{Definition}[section]

\newtheorem{rem}[defi]{Remark}

\theoremstyle{plain}
\newtheorem{prop}[defi]{Proposition}
\newtheorem{thm}[defi]{Theorem}
\newtheorem{lemma}[defi]{Lemma}
\newtheorem{cor}[defi]{Corollary}

\title{The $p$-spectrum of Random Wavelet Series}
\author{C\'eline Esser$^a$\footnote{Corresponding author. Email:
    celine.esser@uliege.be} , Thelma Lambert$^a$ and B\'eatrice Vedel$^b$\\\\
$^a$ Universit\'e de Li\`ege,  All\'ee de la D\'ecouverte 12, B-4000
Li\`ege, Belgium \\
$^b$ Universit\'e Bretagne Sud, CNRS UMR 6205, LMBA, F-56000 Vannes, France} 
\date{}

\begin{document}
\maketitle

% \textcolor{red}{
% \begin{itemize}
%     \item Anglais US et anglais UK (neighbourhood, behavior)
%     \item Vérifier tous les $\N$ et $\N_0$
%     \item Ajouter un dessin du petit-soleil et dans la preuve de la borne sup ?
% \end{itemize}}

\begin{abstract}
The goal of multifractal analysis is to characterize the variations in local regularity of functions or signals by computing the Hausdorff dimension of the sets of points that share the same regularity. While classical approaches rely on H\"older exponents and are limited to locally bounded functions, the notion of $p$-exponents extends multifractal analysis to functions locally in $L^p$, allowing a rigorous characterization of singularities in more general settings. In this work, we propose a wavelet-based methodology to estimate the $p$-spectrum from the distribution of wavelet coefficients across scales. First, we establish an upper bound for the $p$-spectrum in terms of this distribution, generalizing the classical H\"older case. The sharpness of this bound is demonstrated for \textit{Random Wavelet Series}, showing that it can be attained for a broad class of admissible distributions of wavelet coefficients. Finally, within the class of functions sharing a prescribed wavelet statistic, we prove that this upper bound is realized by a prevalent set of functions, highlighting both its theoretical optimality and its representativity of the typical multifractal behaviour in constrained function spaces. 
\end{abstract}

\noindent  {\bf Keywords :} Multifractal Analysis, Multifractal Formalism, Random wavelet series,  Large deviation spectrum, $p$-exponent\\

\noindent {\bf 2010 Mathematics Subject Classification : } 42C40, 28A80, 26A16, 60G17\\

\section{Introduction}

Multifractal analysis provides a framework to describe the fluctuations of pointwise regularity in functions, signals and sample paths of stochastic processes, see e.g. \cite{ArneodoBacryMuzy1998,Balanca2014,BarralFournierJaffardSeuret2010,BarralSeuret2007,jaffard04,Jaffard1996,Jaffard1997,JaffardMartin2017}. Over the past decades, it has become a standard tool in signal and image processing and has been widely applied across diverse domains, including physics, finance, neuroscience, and urban studies \cite{MandMemor,AbryWendtJaffard2012,Arneodo:02,Arneodo1999,Gerasimova2013,LAROCCA:2018:A,Lashermes2005,Lavallee1993,Lengyel2022,pexp2,Mandelbrot1974,Parisi:85,WenAbrJaf07,LevyVehel1994}. Traditionally, this analysis has focused on locally bounded functions whose pointwise regularity can be characterized by H\"older exponents. Recall that for $\alpha>0$ and $x_0 \in \R$, a locally bounded function $f$ belongs to the \textit{H\"older space} $C^\alpha(x_0)$ if there exist a positive constant $C$ and a polynomial $P$ of degree less than $\alpha$ such that
\[
\modu{f(x)-P(x)}\leq C\modu{x-x_0}^\alpha
\]
for every $x$ in a neighbourhood of $x_0$. As $\alpha$ grows, the condition required to belong to $C^\alpha(x_0)$ becomes increasingly restrictive. It is therefore natural to characterize the regularity of $f$ at $x_0$ by determining its \textit{H\"older exponent} defined by
\[
h_f(x_0)=\sup\{\alpha\geq0:f\in C^\alpha(x_0)\}.
\]
Given the possibly erratic behaviour of the function $x_0\mapsto h_f(x_0)$, one usually seeks to determine a geometric interpretation of the different singularities that appear in $f$ and  their significance. The \emph{multifractal} or \textit{singularity spectrum} of $f$ defined by 
\[
\mathscr{D}_f:\ccint{0,+\infty}\rightarrow\{-\infty\}\cup\ccint{0,1}:h\mapsto\dim_{\mathcal{H}}\big\{x_0\in\R:h_f(x_0)=h\big\}
\]
aims to provide such a description. By convention, the Hausdorff dimension of the empty set is equal to $-\infty$, and the support of the spectrum is defined as the set of H\"older exponents actually observed. See Section~\ref{secHaus} for a brief review of the Hausdorff dimension.

As soon as a function satisfies a H\"older-type condition at $x_0$, it is bounded on a neighbourhood of $x_0$, which justifies the study of H\"older exponents being limited to locally bounded functions. However, many functions of interest in both theoretical and applied contexts are not locally bounded, rendering the classical notion of pointwise H\"older regularity meaningless. To overcome this limitation, Calderón and Zygmund introduced in 1961 the concept of $p$-exponents, which generalize the H\"older exponent to functions that are locally in $L^p$ by substituting the $L^\infty_{\text{loc}}$-norm with any $L^p_{\text{loc}}$-norm \cite{cald}.

\begin{defi}
Fix $p\in\coint{1,+\infty}$ and $f\in L^p_{\text{loc}}(\R)$. If $\alpha\geq\frac{-1}{p}$ and $x_0\in\R$, then  $f$ belongs to the space $T^p_\alpha(x_0)$ if there exist a positive constant $C$, a polynomial $P$ of degree less than $\alpha$ and a positive radius $R$ such that for every $r\leq R$, 
\[
\left(\frac{1}{r}\int_{B(x_0,r)}\modu{f(x)-P(x)}^p\;dx\right)^{\frac{1}{p}}\leq Cr^\alpha.
\]
The \textit{$p$-exponent} of $f$ at $x_0$ is then defined as
\[
h_f^{(p)}(x_0)=\sup\left\{\alpha\geq\frac{-1}{p}:f\in T^p_\alpha(x_0)\right\}.
\]
\end{defi}

The $p$-exponent measures the rate of decay of local $L^p$ norms of the oscillation of the function around a point and thus provides a natural tool for multifractal analysis in the non-locally bounded setting. The corresponding \textit{$p$-spectrum} describes the size of the sets of points where the $p$-exponent takes a given value, extending the classical multifractal framework.

\begin{defi}
    The \emph{$p$-spectrum} of $f \in L^p_{\text{loc}}(\R)$ is the mapping defined by
\[
\mathscr{D}^{(p)}_f:\ccint{\frac{-1}{p},+\infty}\rightarrow\{-\infty\}\cup\ccint{0,1}:h\mapsto\dim_{\mathcal{H}}\left\{x_0\in\R:h^{(p)}_f(x_0)=h\right\}.
\]
\end{defi}

First introduced in the setting of partial differential equations, the concept of $p$-exponents only began to be applied in signal processing much later, once their wavelet-based characterization had been established \cite{jaffard_melot05}. In particular, the studies \cite{pexp1,pexp2} investigate the information on the local behaviour of functions near singularities that can be derived from the collection of $p$-exponents. For additional results concerning $p$-exponents, see \cite{MR3775457,MR3305158,MR3254609,MR2172009,MR3666574}.

Indeed, for the multifractal analysis of  signals, wavelet methods are among the most powerful and widely used tools available. A function $f \in L^2$ can be expanded in an orthonormal wavelet basis $\psi_{j,k}$, constructed by dilations and translations of a mother wavelet $\psi$. The corresponding wavelet coefficients encode detailed information about the local regularity of the function. By examining their distribution across scales, one can derive sharp estimates of the singularity spectrum and establish a rigorous \emph{multifractal formalism}, that is, a numerically robust framework for estimating the multifractal spectrum. This wavelet-based approach was initially motivated by the study of fully developed turbulence, and has since become a standard methodology for the analysis of complex natural signals \cite{Abry:14,Arneodo:02,Parisi:85}. Since we are interested in local notions, we may from now on consider 1-periodic functions and restrict their study to the unit interval. Therefore, we assume that a periodized wavelet basis, indexed by the dyadic tree, is fixed in the Schwartz class. See Section~\ref{secWavelet} for further details on wavelets.

In the present study, we address the problem of estimating the $p$-spectrum from the distribution of wavelet coefficients across scales. As a starting point, we recall the estimates on the singularity spectrum obtained in the classical case $p=+\infty$. To this end, we introduce the notion of wavelet density and wavelet profile: A wavelet coefficient sequence refers to any complex sequence \mbox{$\vec{c}=(c_{j,k})_{j\in\N_0,\,k\in\{0,\ldots,2^j-1\}}$}. To any such sequence $\vec{c}$, and for any $\alpha\in\R$, we associate quantities $\rho_{\vec{c}}(\alpha)$ and $\nu_{\vec{c}}(\alpha)$ such that, intuitively, at each large scale $j$, there are approximately $2^{\rho_{\vec{c}}(\alpha)j}$ coefficients of order $2^{-\alpha j}$ and $2^{\nu_{\vec{c}}(\alpha)j}$ coefficients larger than $2^{-\alpha j}$. These notions are formalized as follows.

\begin{defi}\label{def:prof}
Let $\vec{c}\,$ a wavelet coefficients sequence. The \textit{wavelet density} and the \textit{wavelet profile} of the sequence $\vec{c}$ are the functions $\rho_{\vec{c}}$ and $\nu_{\vec{c}}$ respectively defined for every $\alpha\in\R$ by
\[
\rho_{\vec{c}}(\alpha)=\lim_{\varepsilon\rightarrow0^+}\limsup_{j\rightarrow+\infty}\frac{\log_2\left(\#\{k\in\{0,\ldots,2^j-1\}:2^{-(\alpha+\varepsilon)j}\leq\modu{c_{j,k}}\leq 2^{-(\alpha-\varepsilon)j}\}\right)}{j}
\]
and
\[
\nu_{\vec{c}}(\alpha)=\lim_{\varepsilon\rightarrow0^+}\limsup_{j\rightarrow+\infty}\frac{\log_2\left(\#\{k\in\{0,\ldots,2^j-1\}:\modu{c_{j,k}}\geq 2^{-(\alpha+\varepsilon)j}\}\right)}{j}.
\]

Notice that, as soon as $\{\alpha\in\R:\nu_{\vec{c}}(\alpha)=-\infty\}\neq\emptyset$, $\nu_{\vec{c}}$ is the increasing hull of $\rho_{\vec{c}}$, that is,
\begin{equation}\label{eq:enveloppe}
\nu_{\vec{c}}(\alpha)=\sup_{\alpha'\leq \alpha} \rho_{\vec{c}}(\alpha')\;\;\forall\alpha\in\R
\end{equation}
(which can be proved as in \cite{largeDev}).
\end{defi}

These quantities play a key role in the upper bound of the multifractal spectrum, as obtained in \cite{RWS}: 
If $f$ is a uniformly H\"older function and if $\vec{c}$ denotes its sequence of wavelet coefficients in a given wavelet basis, then for every $h\geq 0$,
\begin{equation}
\label{eq:bornesupHolder}
\mathscr{D}_f(h)\leq h\sup_{\alpha\in\ocint{0,h}}\frac{\rho_{\vec{c}}(\alpha)}{\alpha} = h \sup_{\alpha\in\ocint{0,h}}\frac{\nu_{\vec{c}}(\alpha)}{\alpha}
\end{equation}
(where the equality follows from Equation~\eqref{eq:enveloppe}).

Furthermore, it was proved in \cite{RWS} that this upper bound \eqref{eq:bornesupHolder} becomes an equality as soon as the wavelet coefficients are independently sampled at each scale according to a fixed distribution, such series being called \textit{Random Wavelet Series}. See Section~\ref{secRWS} for a precise definition of these series.

In addition, it was established in \cite{aubryPrevalence} that, within the so-called $S^{\nu}$ class of functions sharing a prescribed wavelet statistic, the maximal multifractal richness allowed by the distribution of wavelet coefficients across scales is achieved for ``almost all'' functions. More formally, in the space of functions defined by a given wavelet profile, this upper bound is realized by a \emph{prevalent} set of functions, in the sense defined by Hunt, Sauer, and Yorke. The concept of prevalence provides a precise mathematical framework to capture the notion of genericity in infinite-dimensional spaces. See Section~\ref{secSnu} for some clarifications regarding $S^\nu$ spaces and prevalence.

These three properties -- namely, upper bounds that are sharp for Random Wavelet Series and, more generally, for generic functions in certain function spaces -- are crucial to define the right-hand side of $\eqref{eq:bornesupHolder}$ as a valid formalism. In particular, this expression can be employed numerically to estimate the multifractal spectrum, since it typically coincides with or provides a rigorous upper bound for the true  spectrum.

\medskip

In the context of non-locally bounded functions, previous studies mainly focused on specific models such as \emph{Lacunary Wavelet Series} introduced in \cite{LWS}. In this model, at a given scale $j$, a wavelet coefficient $c_{j,k}$ takes the value $2^{-\alpha j}$ with probability $2^{(\eta-1)j}$, where $\alpha>0$ and $\eta \in (0,1)$, and vanishes otherwise. This construction ensures that, on average, there are $2^{\eta j}$ non-zero coefficients at each scale. The parameter $\eta$ controls the lacunarity of the series, whereas $\alpha$ is directly related to its uniform H\"older regularity. The exact determination of the $p$-spectrum of Lacunary Wavelet Series was completed in \cite{pLWS}, paving the way to the study of the $p$-spectrum in a more general setting.

The aim of our paper is therefore to extend the three results mentioned in the H\"older case, offering a practical method to estimate the $p$-spectrum from the distribution of wavelet coefficients. As to obtain Inequality~\eqref{eq:bornesupHolder}, the requirement of being locally in $L^p$ is replaced by a stronger assumption that can be easily read on wavelet coefficients. This assumption relies on the \textit{scaling function} $\eta_f$, which is defined for every $p>0$ by
\[
\eta_f(p)=\liminf_{j\rightarrow+\infty}\frac{-1}{j} \log_2\left(2^{-j}\displaystyle\sum_{k=0}^{2^j-1}\modu{c_{j,k}}^p\right),
\]
and more precisely on the best value of $p$ for which the scaling function is positive, i.e.
\begin{equation}\label{eq:p_0}
p_0(f)=\sup\{p>0:\eta_f(p)>0\}.
\end{equation}
The relevance of this quantity is justified by the following precise criterion for local $p$-integrability: for $p\geq1$, if $\eta_f(p)>0$, then $f\in L^p_{\text{loc}}$, and if $\eta_f(p)<0$, then $f\notin L^p_{\text{loc}}$ \cite{pexp1}. In addition, it allows one to consider values of $p$ in $\ooint{0,1}$. Our first main result is the following.

\begin{thm}\label{mainSup}
If $f$ is a function for which $p_0(f)>0$, then for every $0<p<p_0(f)$ and every $h\geq \frac{-1}{p}$,
\[
\mathscr{D}_f^{(p)}(h)\leq \min\left(\left(h+\frac{1}{p}\right)\sup_{\alpha\in\ocint{\frac{-1}{p},h}}\frac{\rho_{\vec{c}}(\alpha)}{\alpha+\frac{1}{p}}\,\, ,\,1\right).
\]
\end{thm}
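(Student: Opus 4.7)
The plan is to adapt the wavelet-based approach of the H\"older case (\cite{RWS}) to the $L^p$ setting by using the $p$-leader characterization of the $p$-exponent from \cite{jaffard_melot05}. I would first associate to each dyadic interval $\lambda_{j,k}$ its $p$-leader
\[
\ell^p_{\lambda_{j,k}} = \Bigl(\sum_{\lambda' \subset 3\lambda_{j,k}} |c_{\lambda'}|^p \, 2^{-(j'-j)}\Bigr)^{1/p},
\]
summed over dyadic sub-intervals $\lambda'$ of scale $j' \geq j$, and recall that under mild regularity assumptions, $h^{(p)}_f(x_0) = \liminf_j \big(-\log_2 \ell^p_{\lambda_j(x_0)}\big)/j$. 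Consequently $\{x_0 : h^{(p)}_f(x_0) = h\} \subseteq \bigcap_{\delta>0} E_\delta$ with $E_\delta := \{x_0 : \ell^p_{\lambda_j(x_0)} \geq 2^{-(h+\delta)j} \text{ infinitely often}\}$, and it will suffice to bound $\dim_{\mathcal{H}} E_\delta$ and send $\delta \to 0^+$.

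The geometric observation driving the argument is that a single wavelet coefficient $c_{j,k}$ of size $|c_{j,k}| = 2^{-\alpha j}$ lying in $3\lambda_J(x_0)$ contributes $2^{J/p - j(\alpha+1/p)}$ to $\ell^p_{\lambda_J(x_0)}$, so it alone raises the $p$-leader at scale $J$ above $2^{-(h+\delta)J}$ precisely when $J \geq j(\alpha+1/p)/(h+\delta+1/p)$. This determines a natural \emph{witness ball} of radius $\sim 2^{-j(\alpha+1/p)/(h+\delta+1/p)}$ around each coefficient of size $\sim 2^{-\alpha j}$; the plan is to show that any $x_0 \in E_\delta$ lies in such a witness ball around some coefficient, and then to perform a covering argument.

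To transfer the $p$-leader condition into a single-coefficient condition, the hypothesis $0 < p < p_0(f)$ plays an essential role. Since $\eta_f(p) > 0$, the tail of $(\ell^p_{\lambda_J(x_0)})^p$ coming from scales $j > \gamma J$ is geometrically small, $\lesssim 2^{J(1-\gamma\eta_f(p))}$, and becomes negligible compared to $2^{-(h+\delta)pJ}$ once $\gamma$ is chosen large enough in terms of $h$, $\delta$ and $\eta_f(p)$. A decomposition of the remaining finite-range sum by coefficient magnitude, combined with a careful pigeonhole, should then exhibit, at each scale $J$ witnessing $E_\delta$, a coefficient $c_{j,k}$ with $\lambda_{j,k} \subset 3\lambda_J(x_0)$ whose own size $\alpha = -\log_2|c_{j,k}|/j$ satisfies $J \gtrsim j(\alpha+1/p)/(h+\delta+1/p)$, placing $x_0$ in its witness ball.

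For the dimension computation, I would discretize $\alpha$ on a finite grid covering $\ocint{\frac{-1}{p}, h+2\delta}$; by Definition~\ref{def:prof}, the number of coefficients of size $\sim 2^{-\alpha j}$ at scale $j$ is at most $2^{(\rho_{\vec{c}}(\alpha)+\varepsilon)j}$, and the previous step covers $E_\delta$ by a finite union of $\limsup$-sets of balls of radius $\simeq 2^{-j(\alpha+1/p)/(h+\delta+1/p)}$. The standard Hausdorff-dimension estimate for such $\limsup$-sets then gives
\[
\dim_{\mathcal{H}} E_\delta \leq \sup_{\alpha \in \ocint{\frac{-1}{p}, h+2\delta}} \frac{(h+\delta+\tfrac{1}{p})(\rho_{\vec{c}}(\alpha)+\varepsilon)}{\alpha+\tfrac{1}{p}},
\]
and letting $\varepsilon, \delta \to 0^+$, together with the upper semicontinuity of $\rho_{\vec{c}}$ used to shrink the supremum back to $\ocint{\frac{-1}{p}, h}$, yields the first factor in the announced bound; the inequality $\leq 1$ is automatic since $E_\delta \subseteq \R$. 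The main obstacle will be the $p$-leader-to-coefficient transfer: it must produce a coefficient whose own scale-relative size $\alpha$ (and not merely some weak uniform bound $|c| \geq 2^{-(h+\delta)J}$) places $x_0$ inside the witness ball of radius $2^{-j(\alpha+1/p)/(h+\delta+1/p)}$, and this forces a magnitude-aware pigeonhole that makes essential use of $\eta_f(p) > 0$.
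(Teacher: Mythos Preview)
Your overall strategy is sound and close in spirit to the paper's: use the $p$-leader characterisation, cut off deep scales using $\eta_f(p)>0$, decompose by scale and by coefficient magnitude, and pigeonhole. The gap is in the transfer step, and it is exactly the one you flag as the ``main obstacle''. You claim that from
\[
\sum_{J\le j\le \gamma J}\ \sum_{\lambda'\subset 3\lambda_J(x_0)}|c_{\lambda'}|^p\,2^{J-j}\ \gtrsim\ 2^{-(h+\delta)pJ}
\]
one can extract a \emph{single} coefficient $c_{j,k}$ with $J\gtrsim j(\alpha+1/p)/(h+\delta+1/p)$, where $\alpha=-\log_2|c_{j,k}|/j$. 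But if the dominant contribution comes from $N$ coefficients of common size $\sim 2^{-\alpha j}$ at some scale $j>J$, then the arithmetic gives
\[
j\alpha+\tfrac{j}{p}\ =\ Jh+\tfrac{J}{p}+\tfrac{\log_2 N}{p}+O(\varepsilon J),
\]
so the desired inequality $j(\alpha+1/p)\le J(h+\delta+1/p)$ fails as soon as $\log_2 N$ is of order $J$ (which is allowed, since $N$ can be as large as $3\cdot 2^{j-J}$). In that regime every individual witness ball has radius strictly smaller than $2^{-J}$ and none of them contains $x_0$. No amount of ``magnitude-aware pigeonhole'' on a single $p$-leader will produce a single coefficient with the right ratio; the information that saves the day is \emph{global}, not local to one $\lambda_J(x_0)$.

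The paper circumvents this by abandoning the covering-by-witness-balls picture and instead bounding directly the number of dyadic intervals $\lambda\in\Lambda_J$ whose (restricted) $p$-leader is of order $2^{-hJ}$, i.e.\ the $p$-leader density $\rho_{\vec c}^{(p),\ast}(h)$. After the same pigeonhole on $(j,\alpha)$, one records not just that each such $\lambda$ contains at least one coefficient of order $\alpha$ at scale $j$, but that it contains $2^{r_\lambda j - J}$ of them, with $r_\lambda$ explicitly bounded below via the relation $N\,2^{-\alpha pj+J-j}\gtrsim 2^{-(h+\delta)pJ}$. Since the intervals $\lambda$ are disjoint, summing these local counts yields a lower bound on the \emph{total} number of coefficients of order $\alpha$ at scale $j$, hence on $\rho_{\vec c}(\alpha)$; rearranging gives $\rho_{\vec c}^{(p),\ast}(h)\le (h+1/p)\,\rho_{\vec c}(\alpha)/(\alpha+1/p)$. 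In short: the step you are missing is to replace ``find one coefficient per $x_0$'' by ``count coefficients per $\lambda$ and sum over disjoint $\lambda$'s''.
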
 

Theorem \ref{mainSup} suggests a natural candidate for a multifractal formalism, namely the quantity appearing on the right-hand side of the inequality. Moreover, it is natural to consider the almost everywhere regularity of $f$, i.e. the value of $h$ at which the upper bound reaches 1. This critical value is denoted $h_{\max}^{(p)}$.  This leads us to the following definition.  

\begin{defi}
Let $f\in L^p_{\mathrm{loc}}$ be a function whose wavelet coefficients form the sequence $\vec{c}$. We define
\[
D^{(p)}_f(h) = \left(h+\frac{1}{p}\right)\sup_{\alpha\in\ocint{\frac{-1}{p},h}}\frac{\rho_{\vec{c}}(\alpha)}{\alpha+1/p},
\]
and denote by $h^{(p)}_{\max}$ the smallest $h$ such that $D_f^{(p)}(h)=1$.  
We say that $f$ satisfies the \emph{$p$-large deviation wavelet formalism}  if
\[
\mathscr{D}_f^{(p)} = D^{(p)}_f \quad \text{on} \quad \ocint{-\infty,h^{(p)}_{\max}}.
\]
\end{defi}

Note that the equality in Equation~\eqref{eq:enveloppe} implies
\[
D_f^{(p)}(h)=\left(h+\tfrac{1}{p}\right)\sup_{\alpha\in\ocint{\frac{-1}{p},h}}\frac{\nu_{\vec{c}}(\alpha)}{\alpha+1/p},
\]
which shows that the $p$-large deviation wavelet formalism can equivalently be defined in terms of the wavelet profile of the sequence of wavelet coefficients.  

Our second main result establishes that it is possible to construct a large class of random functions for which the $p$-large deviation wavelet formalism holds. These functions, called Random Wavelet Series, are defined by choosing the wavelet coefficients at each scale $j$ as independent and identically distributed random variables. Given a probability distribution for the coefficients at each scale, it can be shown that they almost surely share the same wavelet density and the same wavelet profile.  
These random series coincide with the processes considered in the classical case $p=+\infty$ in \cite{RWS}, except that here the definition is extended to allow functions that are only locally in $L^p$, rather than necessarily locally bounded.

The parameters involved in the next result are defined in Section~\ref{secRWStot}:  
$p_0$ is an almost sure version of $p_0(f)$ and $h_{\min}$ is the smallest exponent at which the wavelet profile (or density) takes a finite value.

\begin{thm}\label{main_RWS}
Let $f$ be a Random Wavelet Series with $p_0>0$.  
Then, almost surely, for all $0<p<p_0$, the support of $\mathscr{D}^{(p)}_f$ is $\ccint{h_{\min},h_{\max}^{(p)}}$, and $f$ satisfies the $p$-large deviation wavelet formalism.
\end{thm}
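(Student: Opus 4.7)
The upper bound $\mathscr{D}^{(p)}_f(h)\leq\min(D_f^{(p)}(h),1)$ is a pathwise consequence of Theorem~\ref{mainSup}, so the content of Theorem~\ref{main_RWS} is the matching lower bound and the identification of the support. The plan follows the ubiquity scheme of \cite{RWS}, recast in the $p$-setting via the wavelet characterization of $p$-exponents of \cite{jaffard_melot05}. The first ingredient is a single pathwise statistical description of the coefficients, valid for all $p$ at once: since within each scale $(c_{j,k})_{0\leq k<2^j}$ are i.i.d., a Borel-Cantelli argument produces an almost sure event on which, for every $\alpha$ in the support of $\rho_{\vec{c}}$ and every $\varepsilon>0$, at every sufficiently large scale $j$, the number of coefficients with modulus comparable to $2^{-\alpha j}$ lies in $[2^{(\rho_{\vec{c}}(\alpha)-\varepsilon)j},2^{(\rho_{\vec{c}}(\alpha)+\varepsilon)j}]$, their positions are approximately equidistributed on $\{0,\ldots,2^j-1\}$, the global upper bound on $\nu_{\vec{c}}$ holds, and $h_{\min}$ is realized.

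The heart of the argument is the construction of rich level sets. Fix $p\in\ooint{0,p_0}$ and $h\in\ccint{h_{\min},h_{\max}^{(p)}}$. For each $\alpha\in\ooint{-1/p,h}$ in the support of $\rho_{\vec{c}}$, set $\delta=(\alpha+1/p)/(h+1/p)\in\ocint{0,1}$ and consider
\[
E_\alpha=\bigcap_{J\in\N}\bigcup_{j\geq J}\bigcup_{k\,:\,|c_{j,k}|\simeq 2^{-\alpha j}}B(k2^{-j},2^{-\delta j}).
\]
A Jarn\'ik--Besicovitch / ubiquity principle, fuelled by the equidistribution above, yields
\[
\dim_{\mathcal{H}} E_\alpha\geq\frac{\rho_{\vec{c}}(\alpha)}{\delta}=(h+1/p)\,\frac{\rho_{\vec{c}}(\alpha)}{\alpha+1/p}.
\]
A direct computation shows that a single wavelet term $c_{j,k}\psi_{j,k}$ of size $2^{-\alpha j}$ at dyadic distance $r=2^{-\delta j}$ contributes of order $2^{-\delta h j}=r^h$ to the local $L^p$ average on $B(x_0,r)$ precisely because of the choice of $\delta$; together with the wavelet characterization of $T^p_\alpha(x_0)$, this yields $h_f^{(p)}(x_0)\leq h$ for every $x_0\in E_\alpha$. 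Optimizing over $\alpha$ then produces, at least formally, the target dimension $D_f^{(p)}(h)$.

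The main obstacle is upgrading the last inequality to an equality: at a generic $x_0\in E_\alpha$, one must rule out that $x_0$ is \emph{also} abnormally well approximated by dyadic intervals carrying coefficients of size larger than $2^{-\alpha j}$, which would push $h_f^{(p)}(x_0)$ strictly below $h$. This is handled by subtracting from $E_\alpha$ countably many auxiliary limsup sets indexed by an $\varepsilon$-grid of ``forbidden'' exponents $\alpha'<\alpha$; their Hausdorff dimensions, controlled by the almost sure upper bound on $\nu_{\vec{c}}$, can be shown to be strictly smaller than $\rho_{\vec{c}}(\alpha)/\delta$, so that the remaining subset still realizes the target dimension and its points have $p$-exponent exactly $h$. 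The value $h_{\min}$ is reached directly since the dyadics carrying the largest coefficients are dense, while the endpoint $h_{\max}^{(p)}$ follows from the left-continuity of $D_f^{(p)}$. Finally, the emptiness of the level sets above $h_{\max}^{(p)}$ and below $h_{\min}$ is itself a direct consequence of the same statistical ubiquity, which forces $h_f^{(p)}(x_0)\in\ccint{h_{\min},h_{\max}^{(p)}}$ for every $x_0\in[0,1]$ on the almost sure event.
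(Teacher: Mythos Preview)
Your overall architecture---ubiquity for the lower bound on $\dim_{\mathcal{H}}\{h_f^{(p)}\leq h\}$, then a subtraction to pass to $\{h_f^{(p)}=h\}$---is the right one, and is close in spirit to what the paper does. There are, however, two genuine gaps.

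\medskip

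\textbf{The subtraction step does not close.} For a fixed $\alpha$ you obtain $\dim_{\mathcal{H}}E_\alpha\geq d_\alpha:=(h+1/p)\,\rho_{\vec{c}}(\alpha)/(\alpha+1/p)$, and you want to remove $\{h_f^{(p)}<h\}=\bigcup_m\{h_f^{(p)}\leq h-1/m\}$. The difficulty is that Hausdorff dimension is not subtractive: you need a \emph{measure} statement, say $\mathcal{H}^{d_\alpha}(E_\alpha)>0$, together with $\mathcal{H}^{d_\alpha}(\{h_f^{(p)}\leq h-1/m\})=0$ for every $m$. The second condition requires $D_f^{(p)}(h-1/m)<d_\alpha$. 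But if the supremum defining $D_f^{(p)}(h)$ is \emph{not attained}, then for every admissible $\alpha$ one has $d_\alpha<D_f^{(p)}(h)$, while $D_f^{(p)}(h-1/m)\uparrow D_f^{(p)}(h)$; hence for $m$ large the forbidden set can have dimension strictly above $d_\alpha$, and the subtraction yields nothing. Your ``optimizing over $\alpha$'' and ``subtracting forbidden sets'' cannot be performed in that order. The paper resolves this by treating all $\alpha$'s \emph{simultaneously}: it builds a single limsup set $E_h^{(p)}$ (Equation~\eqref{eq:E}) mixing all scales and all $\alpha_n\leq h$, and then invokes a general mass transference principle with \emph{varying} contraction ratios (Theorem~\ref{edouard}) to produce a gauge function $\xi$ with $\lim_{r\to0}\log\xi(r)/\log r$ equal to the \emph{supremum} $D_f^{(p)}(h)$ itself, and $\mathcal{H}^\xi(E_h^{(p)})>0$. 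The subtraction is then done at the level of $\mathcal{H}^\xi$ (Lemma~\ref{gauge}), which succeeds because $D_f^{(p)}(h-1/m)<D_f^{(p)}(h)$ for every $m$.

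\medskip

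\textbf{The statistical input is misstated.} Your claim that ``at every sufficiently large scale $j$'' the number of coefficients of size $\simeq 2^{-\alpha j}$ lies in $[2^{(\rho-\varepsilon)j},2^{(\rho+\varepsilon)j}]$ is false: $\rho_{\vec{c}}$ is defined via a $\limsup$, so the lower bound only holds along a subsequence. More importantly, having roughly $2^{\rho(\alpha)j}$ large coefficients at scale $j$ is \emph{not} by itself enough to feed a Jarn\'ik--Besicovitch argument; one needs the full-measure input $\mathcal{L}(E(\alpha,\bm{\nu}(\alpha)-\varepsilon))=1$. The paper obtains this not from ``approximate equidistribution'' but from Lemma~\ref{j_alpha}, which shows that under \emph{every} dyadic interval $\lambda\in\Lambda_j$ one finds a coefficient of size $\geq 2^{-\alpha j'}$ at a controlled depth $j'\leq (j+\log_2 j)/(\bm{\lambda}(\alpha)-\varepsilon)$; this is the correct probabilistic lemma, and it is what makes Proposition~\ref{leb1} (hence the mass transference) work.
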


The almost sure $p$-spectrum of a Random Wavelet Series is illustrated in Figure \ref{figureRWS}.

\begin{figure}\label{figureRWS}
    \centering
\includegraphics{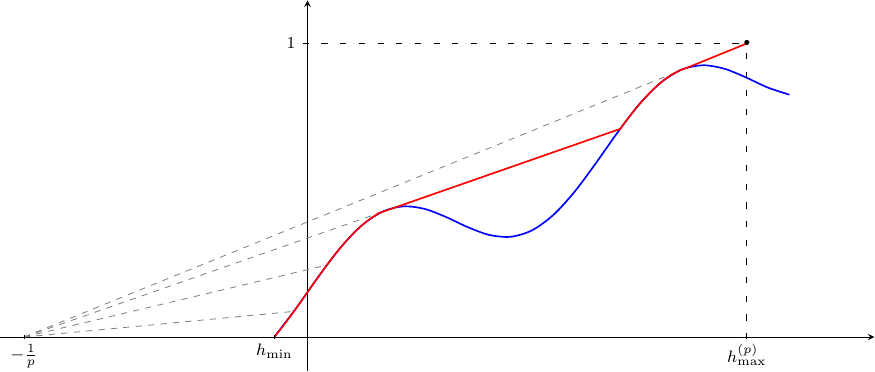}    \caption{The almost sure $p$-spectrum of a Random Wavelet Series (in red) together with the corresponding wavelet density (in blue). }
\end{figure}

As in the classical case $p = +\infty$, one can show that if an asymptotic distribution of wavelet coefficients is prescribed, then the $p$-large deviation wavelet formalism is almost surely satisfied, in the sense of prevalence, which constitutes our third main result.
Here, the coefficients distribution -- given by a so-called admissible profile $\nu$, which defines the space $S^\nu$ -- is allowed to generate functions that are locally in $L^p$, rather than necessarily locally bounded.  
In Section~\ref{secSnu}, we provide precise definitions of the admissible profiles $\nu$ and of the quantity $p_\nu$ used in the next result, to guarantee that $\eta_f(p)>0$ for every $f\in S^\nu$ and every $p<p_\nu$. We also clarify the role of $h_{\min}$ and $h_{\max}^{(p)}$, analogous to those defined in the context of Random Wavelet Series, and show that these quantities can be determined solely from the profile $\nu$.

\begin{thm}\label{mainPrev}
For a prevalent set of functions $f$ in $S^\nu$, for all $0<p<p_\nu$, the support of $\mathscr{D}_f^{(p)}$ is $\ccint{h_{\min},h_{\max}^{(p)}}$, and $f$ satisfies the $p$-large deviation wavelet formalism.
\end{thm}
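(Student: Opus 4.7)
The plan is to adapt the strategy used for the Hölder case in \cite{aubryPrevalence}, exploiting the upper bound of Theorem~\ref{mainSup} and the almost sure behaviour of Random Wavelet Series established in Theorem~\ref{main_RWS}. For every $f\in S^\nu$, the defining property of $S^\nu$ gives $\rho_{\vec c}\le\nu$ pointwise, and $p<p_\nu$ guarantees $\eta_f(p)>0$, so Theorem~\ref{mainSup} immediately yields
\[
\mathscr{D}_f^{(p)}(h)\le \min\!\left(\left(h+\tfrac{1}{p}\right)\sup_{\alpha\in\ocint{-1/p,\,h}}\frac{\nu(\alpha)}{\alpha+1/p},\,1\right)
\]
for every admissible $h$. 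The right-hand side equals $D^{(p)}_\nu(h)$, so only the matching lower bound and the identification of the support $\ccint{h_{\min},h_{\max}^{(p)}}$ require a prevalence argument.

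For that step I would take as probe measure the law $\mu_X$ of a Random Wavelet Series $X$ whose scale-wise distributions are calibrated to realize the profile $\nu$. Admissibility of $\nu$ ensures such an $X$ exists with $p_0\ge p_\nu$, and Theorem~\ref{main_RWS} guarantees that $X$ almost surely satisfies the conclusion. The set $E\subset S^\nu$ on which the conclusion of Theorem~\ref{mainPrev} holds is Borel, being defined through $\liminf$/$\limsup$ of Borel-measurable quantities, so by the Hunt--Sauer--Yorke criterion it suffices to verify $\prob(f+X\in E)=1$ for every fixed $f\in S^\nu$. The upper side of the formalism is stable under this perturbation: since $\vec c(f)$ and $\vec c(X)$ are both dominated by $\nu$ and $\nu_{\vec c(X)}=\nu$ almost surely, one has $\nu_{\vec c(f+X)}=\nu$ and $\eta_{f+X}(p)>0$ for $p<p_\nu$ almost surely.

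The heart of the proof is to transfer the lower bound from $X$ to $f+X$. I would revisit the ubiquity / mass-transference construction underlying Theorem~\ref{main_RWS}: for each $h\in\ccint{h_{\min},h_{\max}^{(p)}}$, it produces a limsup set of dyadic cubes on which the wavelet-based $L^p$ quantity governing the $p$-exponent of $X$ decays precisely like $2^{-hj}$. A deterministic perturbation $f\in S^\nu$ contributes at most an admissible amount to that quantity, so the sharp estimate persists on a random subcollection of those cubes. A Borel--Cantelli estimate on the cancellation events of $X$---whose probabilities are controlled because the coefficient distribution is non-degenerate enough to realize the full profile $\nu$---then ensures that a subcollection of full asymptotic density survives almost surely, yielding the required lower bound on $\mathscr{D}^{(p)}_{f+X}(h)$. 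The same construction at $h=h_{\min}$ and $h=h_{\max}^{(p)}$ pins down the endpoints of the support.

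The main obstacle lies precisely in this last transfer. In the case $p=+\infty$, pointwise Hölder exponents combine gracefully with sums through the sup-norm; for finite $p$ one must instead control local $L^p$-oscillations, and the worst scenario is that $f$ exactly cancels the coefficients of $X$ responsible for a given exponent $h$. Making the Borel--Cantelli estimate quantitative enough to survive this cancellation, and uniform in both $h$ and $p$ (through a countable dyadic discretization and a diagonal extraction so that the conclusion holds simultaneously for every $p<p_\nu$ and every $h\in\ccint{h_{\min},h_{\max}^{(p)}}$ on a single probability-one event), is the genuinely technical step and is where I expect the bulk of the work.
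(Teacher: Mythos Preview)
Your overall architecture matches the paper's: use a Random Wavelet Series associated to $\nu$ as the probe measure, get the upper bound from Theorem~\ref{mainSup}, and transfer the lower-bound construction of Section~\ref{secpRWS} to $f+X$. However, two points in your outline are not yet settled, and the paper resolves them in a specific way that you have not identified.

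First, your assertion that ``$\nu_{\vec c(f+X)}=\nu$ almost surely'' does not follow from $\nu_{\vec c(f)}\le\nu$ and $\nu_{\vec c(X)}=\nu$; membership in $S^\nu$ only gives $\nu_{\vec c(f+X)}\le\nu$. The paper does \emph{not} establish this equality. Instead it proves, for every fixed $f\in S^\nu$, that almost surely the $p$-spectrum of $f+X$ equals $D^{(p)}_\nu$ on $\ccint{h_{\min},h_{\max}^{(p)}}$; this yields prevalence of the set of functions whose spectrum is $D^{(p)}_\nu$. To conclude that those functions satisfy the $p$-large deviation wavelet formalism (which is stated in terms of $\nu_{\vec c}$, not $\nu$), the paper then intersects with the set $\{f\in S^\nu:\nu_{\vec c}=\nu\}$, whose prevalence is already known from \cite{aubryPrevalence}, and uses that finite intersections of prevalent sets are prevalent.

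Second, and more importantly, the anti-cancellation step you flag as ``the bulk of the work'' is resolved in the paper by a concrete device you do not mention: the probe $X$ is taken with coefficients $C_{j,k}=\varepsilon_{j,k}\modu{C_{j,k}}$, where $(\varepsilon_{j,k})$ are i.i.d.\ Rademacher signs independent of the moduli. For any deterministic $c_{j,k}$ one then has $\prob(\modu{C_{j,k}+c_{j,k}}\ge\modu{C_{j,k}})\ge\tfrac12$, and Hoeffding's inequality guarantees that, under each dyadic cube $\lambda\in\Lambda_j$, at least a third of the sub-coefficients at a suitable scale $J_n$ satisfy $\modu{C_{\lambda'}+c_{\lambda'}}\ge\modu{C_{\lambda'}}$ with failure probability $O(e^{-j})$. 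This is exactly what is needed to rerun Lemma~\ref{j_alpha} for the perturbed coefficients and hence Proposition~\ref{leb1}, Proposition~\ref{reg_max}, and Theorem~\ref{inf_spectre}. Without the sign randomization, a generic RWS ``calibrated to realize $\nu$'' need not be ``non-degenerate enough'': its modulus distribution can be atomic, and then no Borel--Cantelli argument of the type you sketch controls the cancellation with a fixed $f$. So the missing idea is not a matter of making estimates quantitative; it is the Rademacher construction itself.
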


\medskip

Our paper is organized as follows. In Section~\ref{secnot}, we recall the necessary notations, introduce wavelets, and define the local $\ell^p$-norm of wavelet coefficients ($p$-leaders), which allow to characterize the pointwise $p$-regularity. We also review the Hausdorff measure and dimension.   Section~\ref{secSuptot} is devoted to the proof of Theorem~\ref{mainSup}. In Section~\ref{secRWStot}, we focus on the particular case of Random Wavelet Series, including a precise definition of these functions, and we prove Theorem~\ref{main_RWS}. In this section, we provide a lower bound for the spectrum, which, combined with the upper bound given by the previous result, shows that the upper bound is optimal.  Finally, in Section~\ref{secSnutot}, we recall the notion of prevalence and the spaces $S^\nu$, and we prove Theorem~\ref{mainPrev}. Some auxiliary results related to Random Wavelet Series are provided in the Appendix \ref{appA}.

In this paper, $\mathbb{N}$ denotes the set $\{1,2,\ldots\}$ of positive integers, whereas $\mathbb{N}_0$ denotes the set $\{0,1,2,\ldots\}$ of non-negative integers. Moreover, $\lceil \cdot \rceil$ stands for the ceiling function, defined for every $x \geq 0$ by
\[
\lceil x \rceil = \min\{n \in \mathbb{N}_0 : x \leq n\}.
\]
We also adopt the conventions $\inf \emptyset = +\infty$ and $\frac{1}{+\infty} = 0$.

\section{Notations and definitions}\label{secnot}

\subsection{Wavelets and leaders}\label{secWavelet}

We consider a mother wavelet $\psi$ in the Schwartz class.\footnote{A compactly supported wavelet could be used as well, provided that its regularity is larger than the pointwise regularity of the signal.} Then the collection
\[
\left\{2^{\frac{j}{2}}\psi_{j,k} :j\in\N,k\in\{0,\ldots,2^j-1\}\right\}\cup\{\psi_{0,0}=1\},
\]
where $\psi_{j,k}$ is the periodized wavelet
\[
\psi_{j,k}(x)=\sum_{l\in\Z}\psi\left(2^j(x-l)-k\right),\; x\in\ccint{0,1},
\]
forms an orthonormal basis of $L^2(\ccint{0,1})$ (see \cite{LM86,Daubechies:92}). We use a $L^\infty$-normalisation, in which case any one-periodic function $f$ of $L^2$ can be written as
\[
f=\sum_{j\in\N_0}\sum_{k=0}^{2^j-1}c_{j,k}\psi_{j,k},
\]
where the wavelet coefficients of $f$ are defined by
\[
c_{j,k}=2^j\int_0^1\psi_{j,k}(x)f(x)\,dx.
\]
Note that the wavelet coefficients can be defined even when $f$ does not belong to $L^2$.

Dyadic intervals are classically used to index wavelets and wavelet coefficients: if we set \mbox{$\lambda_{j,k}=\coint{k2^{-j},(k+1)2^{-j}}$}, then $\psi_{\lambda_{j,k}}=\psi_{j,k}$ and $c_{\lambda_{j,k}}=c_{j,k}$ for every $k\in\{0,\ldots,2^j-1\}$ and every $j\in\N_0$. Therefore, for all $j\in\N_0$, we identify the set of all dyadic intervals at scale $j$, that is, $\{\lambda_{j,k}:k\in\{0,\ldots,2^j-1\}\}$, with the set of positions associated with such dyadic intervals, i.e. $\{0,\ldots,2^j-1\}$. Those two sets are denoted by $\Lambda_j$, and $\Lambda$ is both the set of all dyadic intervals included in $\ccint{0,1}$ and the set of pairs $(j,k)$ with $j\in\N_0$ and $k\in\Lambda_j$.
With these notations, for a fixed wavelet basis, the function $f$ is identified with a sequence $\vec{c}$ of $\R^\Lambda$. Finally, in the context of pointwise properties, it is useful to refer to $\lambda_j(x_0)$ as the only dyadic interval of $\Lambda_j$ that contains $x_0$.

\bigskip

One can investigate the pointwise regularity of a function $f$ using its wavelet coefficients $\vec{c}$. Similarly to the H\"older case, where the \textit{wavelet leaders} defined by 
\begin{equation}\label{eq:leader}
l_\lambda=\sup_{j'\geq j}\sup_{\lambda'\in\Lambda_{j'},\,\lambda'\subseteq 3\lambda}\modu{c_{\lambda'}}\quad (\lambda\in\Lambda_j,j\in\N_0)
\end{equation}
allow to compute the H\"older exponent through a log-log regression \cite{jaffard04}, one  can define quantities, called \textit{$p$-leaders}, which provide a way to compute the $p$-exponents. In this work, we do not use the classical definition of $p$-leaders as in \cite{pexp1}, but rather a version introduced in \cite{laurent} to facilitate their use. In this case, at each large scale, the local supremum over coefficients in \eqref{eq:leader} is replaced by the mean of these same coefficients to the power $p$, that is, a weighted $l^p$-norm.

\begin{defi}
Fix $p>0$, a scale $j\in\N_0$ and a dyadic interval $\lambda\in\Lambda_j$. The \textit{$p$-leader} associated to $\lambda$ is 
\[
l_\lambda^{(p)}=\sup_{j'\geq j}\left(\sum_{\lambda'\in\Lambda_{j'},\,\lambda'\subseteq 3\lambda}\modu{c_{\lambda'}}^p2^{-(j'-j)}\right)^{\frac{1}{p}}.
\]
\end{defi}

The main purpose of introducing $p$-leaders is to obtain the following characterization of $p$-regularity. Note that if $p\in(0,1)$, this property is used to define $p$-exponents as in \cite{pexp1}.

\begin{prop}\cite{pexp1,laurent}
Let $f:\ccint{0,1}\rightarrow\R$ and $p\geq 1$ be such that $\eta_f(p)>0$. Then for every $x_0\in\ccint{0,1}$,
\[
h^{(p)}_f(x_0)
=\liminf_{j\rightarrow+\infty}\frac{\log\left(l_{\lambda_j(x_0)}^{(p)}\right)}{\log \left(2^{-j}\right)}.
\]
\end{prop}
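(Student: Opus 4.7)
The plan is to establish the identity by two complementary inequalities, both reached by decomposing $f$ (minus a suitable polynomial) in the wavelet basis and comparing local $L^p$-norms against sums of wavelet coefficients. Throughout I would fix $x_0\in\ccint{0,1}$, write $\lambda_j=\lambda_j(x_0)$, and exploit the Schwartz decay of $\psi$ together with the assumption $\eta_f(p)>0$, which is exactly what is needed to absorb large-scale and far-away contributions into geometrically convergent sums.

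\emph{Lower bound for $h^{(p)}_f(x_0)$ in terms of the $p$-leader decay.} Assume that $\liminf_{j\to\infty}\log(l^{(p)}_{\lambda_j})/\log(2^{-j})>\alpha$, so that $l^{(p)}_{\lambda_j}\leq C2^{-\alpha j}$ for all $j$ large. Decompose
\[
f = P_{x_0} + \sum_{j'\geq j,\ \lambda'\subseteq 3\lambda_j}c_{\lambda'}\psi_{\lambda'} + R_j,
\]
where $P_{x_0}$ is the Taylor-type polynomial built from the low-scale part of the expansion and $R_j$ gathers the contributions from wavelets $\psi_{\lambda'}$ at scales $j'\geq j$ whose (essential) support lies outside $3\lambda_j$. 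For the first sum, at a point $x\in\lambda_j$ and for each scale $j'\geq j$, the Schwartz decay of $\psi$ localizes $\sum_{\lambda'\subseteq 3\lambda_j}c_{\lambda'}\psi_{\lambda'}(x)$ and allows a pointwise bound whose $L^p(\lambda_j)$-norm is controlled by the weighted $\ell^p$-quantity defining $l^{(p)}_{\lambda_j}$, up to a geometric series in $j'-j$. Integrating on $\lambda_j$ of length $2^{-j}$ then produces a factor $2^{-j}$, so this term contributes $\lesssim (l^{(p)}_{\lambda_j})^{p}\cdot 2^{-j}\lesssim 2^{-(\alpha p+1)j}$, matching the right-hand side of the $T^p_\alpha$ condition. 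The remainder $R_j$ is handled by Schwartz decay: for wavelets with $\lambda'\not\subseteq 3\lambda_j$, $\psi_{\lambda'}(x)$ decays polynomially in $2^{j'}\,\mathrm{dist}(x,\lambda')$, and the coefficients are controlled crudely by the scaling function, so $\eta_f(p)>0$ yields the required decay.

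\emph{Upper bound for $h^{(p)}_f(x_0)$ in terms of the $p$-leader decay.} Conversely, assume $f\in T^p_\alpha(x_0)$, so that $\int_{B(x_0,r)}\modu{f(x)-P(x)}^p\,dx\leq C r^{\alpha p+1}$ for some polynomial $P$ of degree $<\alpha$ and all small $r$. Since the mother wavelet has vanishing moments of order greater than $\alpha$, the coefficient $c_{\lambda'}=2^{j'}\int\psi_{\lambda'}(x)(f(x)-P(x))\,dx$ can be split according to the distance of $\mathrm{supp}\,\psi_{\lambda'}$ from $x_0$. For $\lambda'\subseteq 3\lambda_j$ with $j'\geq j$, applying Hölder's inequality to the integral on an annulus around $x_0$ and summing using the Schwartz decay of $\psi$ yields
\[
\sum_{\lambda'\subseteq 3\lambda_j,\ \lambda'\in\Lambda_{j'}}\modu{c_{\lambda'}}^p\,2^{-(j'-j)}\ \lesssim\ 2^{j}\int_{B(x_0,C2^{-j})}\modu{f-P}^p\,dx\ \lesssim\ 2^{-\alpha p j},
\]
uniformly in $j'\geq j$, which gives $l^{(p)}_{\lambda_j}\lesssim 2^{-\alpha j}$ and therefore the desired inequality after taking $\liminf$ and letting $\alpha\uparrow h^{(p)}_f(x_0)$.

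\emph{Main obstacle.} The delicate part is controlling the global tails in both directions, since the $p$-leader is a purely local object on $3\lambda_j$ while $f-P$ is reconstructed from wavelet coefficients over the whole tree. In the first direction one must show that wavelets outside $3\lambda_j$ contribute negligibly to $\|f-P\|_{L^p(\lambda_j)}$, and in the second direction that the polynomial $P_{x_0}$ extracted from the low-scale part of the wavelet expansion really coincides (up to controlled error) with the $T^p_\alpha$ approximating polynomial. Both issues are resolved by combining the fast decay of $\psi$ with the hypothesis $\eta_f(p)>0$: this hypothesis guarantees a uniform geometric bound on $2^{-j}\sum_k\modu{c_{j,k}}^p$, which is precisely what makes the geometric series over distant coefficients summable and justifies interchanging summation and integration throughout.
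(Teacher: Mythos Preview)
The paper does not supply its own proof of this proposition: it is stated with a citation to \cite{pexp1,laurent} and used as a black box. There is therefore nothing in the paper to compare your argument against.

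That said, your outline follows exactly the strategy of the cited references. Both inequalities are obtained by the wavelet decomposition you describe: one direction reconstructs $f-P$ locally from coefficients in the subtree below $3\lambda_j$ and controls the tails outside $3\lambda_j$ via the fast decay of $\psi$ and the hypothesis $\eta_f(p)>0$; the other direction tests $f-P$ against $\psi_{\lambda'}$, uses vanishing moments to kill $P$, and recovers the weighted $\ell^p$ bound by H\"older plus dyadic annuli. The one point worth tightening in your sketch is the equivalence between the modified $p$-leader used here (a supremum over $j'\geq j$) and the classical one of \cite{pexp1} (a full sum over $j'\geq j$): the supremum is trivially dominated by the sum, while the converse requires precisely $\eta_f(p)>0$ to make the series over $j'$ geometrically convergent. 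Once that is noted, your argument is the standard one and is correct at the level of a sketch.
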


\subsection{Hausdorff measure and dimension}\label{secHaus}

A few fundamental concepts are outlined in this Section; for a more complete treatment, see e.g. \cite{Falconer:86}. Let $A$ be a subset of $\R^d$ and $\xi:\coint{0,+\infty}\rightarrow\coint{0,+\infty}$ be a function such that $\xi(0)=0$ and $\xi$ is increasing on a neighbourhood of 0. The \textit{Hausdorff outer measure} at scale $t\in\ocint{0,+\infty}$ associated with $\xi$ of the set $A$ is defined by
\[
\mathcal{H}_t^\xi(A)=\inf\left\{\sum_{n\in\N}\xi(\text{diam}(E_n)):\text{diam}(E_n)\leq t \text{ and } A\subseteq \bigcup_{n\in\N}E_n\right\}.
\]
The \textit{Hausdorff measure} associated with $\xi$ of the set $A$ is defined by
\[
\mathcal{H}^\xi(A)=\lim_{t\rightarrow0^+}\mathcal{H}^\xi_t(A).
\]
If $\xi(x)= x^s$ with $s\geq 0$, one simply uses the usual notations $\mathcal{H}^\xi_t(A)=\mathcal{H}^s_t(A)$ and \mbox{$\mathcal{H}^\xi(A)=\mathcal{H}^s(A)$}, and these measures are called \textit{$s$-dimensional Hausdorff outer measure} at scale $t$ and \textit{$s$-dimensional Hausdorff measure} respectively.

\medskip

If $A$ is non-empty, it can be proved that the function $s\mapsto \mathcal{H}^s(A)$ is non-decreasing and satisfies 
\[
\mathcal{H}^s(A)=+\infty\;\forall s\in\coint{0,h}\quad\text{and}\quad \mathcal{H}^s(A)=0\;\forall s\in\ooint{h,+\infty}.
\]
This threshold value $h$ is called the \textit{Hausdorff dimension} of $A$. More precisely,
\[
\dim_{\mathcal{H}}A=\left\{\begin{array}{ll}
\inf\{s\geq 0:\mathcal{H}^s(A)=0\} & \text{ if }A\neq\emptyset, \\
-\infty & \text{ if }A=\emptyset.
\end{array}\right.
\]
Moreover, if there exists a gauge function $\xi$ such that
\[
\lim_{r\rightarrow0^+}\frac{\log\xi(r)}{\log r}=h \quad\text{and}\quad \mathcal{H}^\xi(A)>0,
\]
then
\[
\dim_{\mathcal{H}}(A)\geq h.
\]

\section{Upper bound for the $p$-spectrum}\label{secSuptot}

The aim of this section is to prove Theorem~\ref{mainSup}, that is, to provide an upper bound for the $p$-multifractal spectrum of any fixed function $f$ with $p_0(f)>0$, for any fixed $p<p_0(f)$, recalling that $p_0(f)$ is defined in Equation \eqref{eq:p_0}. This upper bound is obtained using large deviation estimates on the distribution of the wavelet coefficients $\vec{c}$ of $f$.

\subsection{Large deviation estimates of $p$-leaders}\label{secSup}
We can define \textit{$p$-leader versions} of the wavelet density and the wavelet profile.

\begin{defi}
Let $(l^{(p)}_\lambda)_{\lambda\in\Lambda}$ be the $p$-leaders sequence associated with $\vec{c}$. 
The \textit{$p$-leader density} and the \textit{$p$-leader profile} of $\vec{c}$ are the functions $\rho^{(p)}_{\vec{c}}$ and $\nu^{(p)}_{\vec{c}}$ respectively defined by
\[
\rho^{(p)}_{\vec{c}}(\alpha)=\lim_{\varepsilon\rightarrow0^+}\limsup_{j\rightarrow+\infty}\frac{\log_2\left(\#\{\lambda\in\Lambda_j:2^{-(\alpha+\varepsilon)j}\leq l^{(p)}_\lambda\leq 2^{-(\alpha-\varepsilon)j}\}\right)}{j}
\]
and
\[
\nu^{(p)}_{\vec{c}}(\alpha)=\lim_{\varepsilon\rightarrow0^+}\limsup_{j\rightarrow+\infty}\frac{\log_2\left(\#\{\lambda\in\Lambda_j:l^{(p)}_\lambda\geq 2^{-(\alpha+\varepsilon)j}\}\right)}{j},
\]
for every $\alpha\in\R$.
\end{defi}

To avoid the overlap in the sums defining two neighboring $p$-leaders -- which is important to preserve independence across dyadic intervals at the same scale when working with independent random wavelet coefficients -- we use the following restricted definition and correspondingly adapt the definitions of the density and profile.

\begin{defi}
Fix $p>0$, a scale $j\in\N_0$, and a dyadic interval $\lambda\in\Lambda_j$. The \textit{restricted $p$-leader} associated with $\lambda$ is defined by
\[
e_\lambda^{(p)}=\sup_{j'\geq j}\left(\sum_{\lambda'\in\Lambda_{j'},\,\lambda'\subseteq \lambda}\modu{c_{\lambda'}}^p\,2^{-(j'-j)}\right)^{\frac{1}{p}}.
\]
If we consider these restricted $p$-leaders instead of the classical ones in the definitions of the $p$-leader density and profile, we denote the resulting functions by $\rho_{\vec{c}}^{(p),\ast}$ and $\nu_{\vec{c}}^{(p),\ast}$, in place of $\rho_{\vec{c}}^{(p)}$ and $\nu_{\vec{c}}^{(p)}$.
\end{defi}

Let us now compare the density based either on restricted or non-restricted modified $p$-leaders.

\begin{prop}\label{class_restr}
For every $\alpha\in\R$, one has
\[
\rho_{\vec{c}}^{(p)}(\alpha)\leq\rho_{\vec{c}}^{(p),\ast}(\alpha).
\]
\end{prop}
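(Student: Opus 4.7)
The plan is to deduce the stated inequality from a pointwise comparison between $l$- and $e$-leaders together with a simple counting transfer. The geometric fact I use is that $3\lambda=\lambda^L\cup\lambda\cup\lambda^R$, the union of $\lambda$ with its two (periodic) neighbors at the same scale. Splitting the sum in the definition of $l_\lambda^{(p)}$ according to these three sub-intervals yields at once the two estimates
\[
e_{\tilde\lambda}^{(p)}\leq l_\lambda^{(p)}\leq 3^{1/p}\max\bigl\{e_{\lambda^L}^{(p)},e_\lambda^{(p)},e_{\lambda^R}^{(p)}\bigr\}\qquad\forall\,\tilde\lambda\in\{\lambda^L,\lambda,\lambda^R\},
\]
the left inequality by monotonicity of the sum in the domain of summation, the right one from $(l_\lambda^{(p)})^p\leq(e_{\lambda^L}^{(p)})^p+(e_\lambda^{(p)})^p+(e_{\lambda^R}^{(p)})^p$ valid at each scale $j'\geq j$.

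Fix now $\alpha\in\R$, $\varepsilon>0$ and an auxiliary slack $\delta>0$. For $j$ large enough the constant $3^{-1/p}$ is absorbed into an exponential: $3^{-1/p}\geq 2^{-\delta j}$. Hence any $\lambda\in\Lambda_j$ with $l_\lambda^{(p)}\in[2^{-(\alpha+\varepsilon)j},2^{-(\alpha-\varepsilon)j}]$ admits some neighbor $\tilde\lambda\in\{\lambda^L,\lambda,\lambda^R\}$ with
\[
2^{-(\alpha+\varepsilon+\delta)j}\leq e_{\tilde\lambda}^{(p)}\leq l_\lambda^{(p)}\leq 2^{-(\alpha-\varepsilon)j}\leq 2^{-(\alpha-\varepsilon-\delta)j}.
\]
Since each $\tilde\lambda\in\Lambda_j$ is a neighbor of at most three distinct $\lambda$, the map $\lambda\mapsto\tilde\lambda$ is at most $3$-to-$1$, so
\[
\#\bigl\{\lambda\in\Lambda_j:l_\lambda^{(p)}\in[2^{-(\alpha+\varepsilon)j},2^{-(\alpha-\varepsilon)j}]\bigr\}\leq 3\,\#\bigl\{\tilde\lambda\in\Lambda_j:e_{\tilde\lambda}^{(p)}\in[2^{-(\alpha+\varepsilon+\delta)j},2^{-(\alpha-\varepsilon-\delta)j}]\bigr\}.
\]

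Taking $\log_2$, dividing by $j$ and passing to $\limsup_{j\to\infty}$ kills the factor $3$ and yields $g_l(\varepsilon,\alpha)\leq g_e(\varepsilon+\delta,\alpha)$, where $g_l$ and $g_e$ denote the inner $\limsup$ quantities appearing in the definitions of $\rho^{(p)}$ and $\rho^{(p),\ast}$. Both functions are non-decreasing in their first argument, so letting first $\varepsilon\to 0^+$ and then $\delta\to 0^+$ gives the desired $\rho_{\vec c}^{(p)}(\alpha)\leq\rho_{\vec c}^{(p),\ast}(\alpha)$. No substantial obstacle is expected: the proof reduces to careful bookkeeping of the $\varepsilon$--$\delta$ parameters, whose role is to make the constant $3^{1/p}$ exponentially negligible and to keep the widened range in the symmetric form required by the density definition. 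Periodicity of the wavelet basis ensures that every $\lambda\in\Lambda_j$ has two same-scale neighbors, so no boundary case has to be treated separately.
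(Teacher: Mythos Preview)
Your proof is correct and follows essentially the same approach as the paper's: both decompose $3\lambda$ into $\lambda$ and its two neighbors, use the resulting two-sided comparison $e_{\tilde\lambda}^{(p)}\leq l_\lambda^{(p)}\leq 3^{1/p}\max_{\tilde\lambda}e_{\tilde\lambda}^{(p)}$, and conclude by a $3$-to-$1$ counting argument that absorbs the constant $3^{1/p}$ into the exponential window. The paper simply writes the widened window as $[2^{-(\alpha+2\varepsilon)j},2^{-(\alpha-2\varepsilon)j}]$ rather than introducing an auxiliary $\delta$, but this is purely cosmetic.
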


\begin{proof}
It follows from the fact that for every scale $j\geq 2$ and every $\lambda\in\Lambda_j$, if $N(\lambda)$ denotes the set of the three neighbours of $\lambda$ in $\Lambda_j$, then
\[
l_\lambda^{(p)}=\left(\sum_{\mu\in N(\lambda)}\left(e_\mu^{(p)}\right)^p\right)^{\frac{1}{p}},
\]
which entails that for every $j\geq2$ and every $\varepsilon>0$, 
\[
\#\{\lambda\in\Lambda_j:2^{-(\alpha+\varepsilon)j}\leq 
l_\lambda^{(p)}<2^{-(\alpha-\varepsilon)j}\} \leq 3\cdot\#\{\lambda\in\Lambda_j:2^{-(\alpha+2\varepsilon)j}\leq e_\lambda^{(p)}<2^{-(\alpha-2\varepsilon)j}\}.
\]
\end{proof}

Note that, in the case of the $p$-leader profile, the functions $\nu_{\vec{c}}^{(p)}$ and $\nu_{\vec{c}}^{(p),\ast}$  actually coincide on $\R$. This result can be obtained as in \cite{largeDev}, where the classical case  $p=+\infty$ is treated.

\subsection{Proof of Theorem~\ref{mainSup}}\label{secpSup}

The proof of  Theorem~\ref{mainSup} is decomposed into Proposition~\ref{spectre_densite}, the previously established Proposition~\ref{class_restr} and Theorem~\ref{petitsoleil}, each of which proves one of the following inequalities:
\[
\mathscr{D}_f^{(p)}(h)\leq 
\rho_{\vec{c}}^{(p)}(h)\leq 
\rho_{\vec{c}}^{(p),\ast}(h)\leq\left(h+\frac{1}{p}\right)\sup_{\alpha\in\ocint{\frac{-1}{p},h}}\frac{\rho_{\vec{c}}(\alpha)}{\alpha+\frac{1}{p}}.
\]

The proof of Proposition~\ref{spectre_densite} works verbatim as in \cite{largeDev}, where the results are established in the case $p=+\infty$. It relies on Lemma \ref{lm_spectre_densite}, which itself follows immediately from the characterization of $p$-leaders via $p$-exponents.

\begin{lemma}\label{lm_spectre_densite}
For every $\alpha\geq\frac{-1}{p}$, define
\[
F_j^{(p)}(\alpha)=\left\{\lambda\in\Lambda_j:l_\lambda^{(p)}\geq 2^{-\alpha j}\right\} \quad\text{and}\quad E^{(p)}(\alpha)=\limsup_{j\rightarrow+\infty}\bigcup_{\lambda\in F_j^{(p)}(\alpha)}\lambda.
\]
Then
the following holds:
\begin{enumerate}
\item If $x_0\in E^{(p)}(\alpha)$, then $h^{(p)}_f(x_0)\leq \alpha$.
\item If $h_f^{(p)}(x_0)<\alpha$, then $x_0\in E^{(p)}(\alpha)$.
\end{enumerate}
\end{lemma}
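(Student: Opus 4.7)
The plan is to reduce both items to the characterization $h^{(p)}_f(x_0)=\liminf_{j\to+\infty}\log(l^{(p)}_{\lambda_j(x_0)})/\log(2^{-j})$ stated just before the lemma, so that the argument becomes essentially bookkeeping of inequalities. The key geometric observation is that, for every scale $j$, the family $\Lambda_j$ partitions $\ccint{0,1}$ (up to endpoints of dyadic intervals, which are irrelevant for pointwise regularity). Therefore, if $x_0$ lies in some $\lambda\in\Lambda_j$, then necessarily $\lambda=\lambda_j(x_0)$, and the membership $x_0\in\bigcup_{\lambda\in F_j^{(p)}(\alpha)}\lambda$ is equivalent to the numerical inequality $l^{(p)}_{\lambda_j(x_0)}\geq 2^{-\alpha j}$.

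For item 1, I would fix $x_0\in E^{(p)}(\alpha)$ and use the definition of the $\limsup$ of sets to extract an increasing sequence of scales $(j_n)$ along which $\lambda_{j_n}(x_0)\in F_{j_n}^{(p)}(\alpha)$, i.e., $l^{(p)}_{\lambda_{j_n}(x_0)}\geq 2^{-\alpha j_n}$ (in particular $l^{(p)}_{\lambda_{j_n}(x_0)}>0$, so the logarithm is well defined). Taking $\log$ and dividing by $\log(2^{-j_n})=-j_n\log 2<0$ reverses the inequality to $\log(l^{(p)}_{\lambda_{j_n}(x_0)})/\log(2^{-j_n})\leq\alpha$. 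The liminf along this subsequence is therefore $\leq\alpha$, hence so is the full liminf, which by the cited characterization equals $h^{(p)}_f(x_0)$.

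For item 2, unpacking the $\liminf$ in the characterization, the hypothesis $h^{(p)}_f(x_0)<\alpha$ produces infinitely many scales $j$ at which $\log(l^{(p)}_{\lambda_j(x_0)})/\log(2^{-j})<\alpha$; reversing once more under division by the negative quantity $\log(2^{-j})$ yields the strict inequality $l^{(p)}_{\lambda_j(x_0)}>2^{-\alpha j}$, so that $\lambda_j(x_0)\in F_j^{(p)}(\alpha)$. The partition observation of the first paragraph then places $x_0$ in the corresponding union for infinitely many $j$, i.e., $x_0\in E^{(p)}(\alpha)$. No substantive obstacle arises: once the $p$-leader characterization of $p$-exponents is granted, the proof is a direct translation of a liminf condition into a $\limsup$ of sets, with the only care needed being the systematic direction flip caused by dividing through the negative number $\log(2^{-j})$.
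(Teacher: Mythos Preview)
Your proof is correct and follows exactly the route the paper indicates: the authors do not spell out a proof but simply note that the lemma ``follows immediately from the characterization of $p$-leaders via $p$-exponents,'' which is precisely the reduction you carry out. Your careful handling of the sign flip under division by $\log(2^{-j})$ and the partition observation $\lambda=\lambda_j(x_0)$ are the only ingredients needed.
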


\begin{prop}\label{spectre_densite}
For every $h\geq\frac{-1}{p}$, we have
\[
\mathscr{D}^{(p)}_f(h)\leq 
\rho_{\vec{c}}^{(p)}(h).
\]
\end{prop}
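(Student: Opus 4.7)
The plan is to apply the classical large-deviation covering argument, following \cite{largeDev} verbatim in the $p$-leader setting. Write $A_h = \{x_0 \in \R : h_f^{(p)}(x_0) = h\}$. If $A_h$ is empty, then $\mathscr{D}_f^{(p)}(h) = -\infty$ and the inequality holds trivially, so assume $A_h \neq \emptyset$. First I would sandwich $A_h$ between two level sets $E^{(p)}(\cdot)$: Lemma~\ref{lm_spectre_densite}(2), applied with $\alpha = h+\varepsilon$, yields $A_h \subseteq E^{(p)}(h+\varepsilon)$ for every $\varepsilon > 0$, while Lemma~\ref{lm_spectre_densite}(1), applied with $\alpha = h-\varepsilon$, gives $A_h \cap E^{(p)}(h-\varepsilon) = \emptyset$. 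Combining these yields
\[
A_h \subseteq E^{(p)}(h+\varepsilon)\setminus E^{(p)}(h-\varepsilon) \qquad \text{for every }\varepsilon>0.
\]

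Next I would convert this sandwich into a covering of $A_h$ by dyadic intervals whose $p$-leaders fall in the density bracket around $2^{-hj}$. If $x_0$ lies in the above difference, then there exists $j_0(x_0)$ such that every $\lambda \in \Lambda_j$ with $j \geq j_0$ and $x_0 \in \lambda$ satisfies $l_\lambda^{(p)} < 2^{-(h-\varepsilon)j}$, while for infinitely many $j$ there is some $\lambda \in \Lambda_j$ containing $x_0$ with $l_\lambda^{(p)} \geq 2^{-(h+\varepsilon)j}$. Setting
\[
G_j(h,\varepsilon) = \bigl\{\lambda \in \Lambda_j : 2^{-(h+\varepsilon)j} \leq l_\lambda^{(p)} \leq 2^{-(h-\varepsilon)j}\bigr\},
\]
we obtain $A_h \subseteq \limsup_{j \to +\infty} \bigcup_{\lambda \in G_j(h,\varepsilon)} \lambda$.

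Finally I would invoke a standard Hausdorff-dimension covering estimate. Given $s > \rho_{\vec{c}}^{(p)}(h)$, pick $\delta > 0$ so that $s > \rho_{\vec{c}}^{(p)}(h) + 2\delta$; by the definition of $\rho_{\vec{c}}^{(p)}(h)$ as a limit in $\varepsilon$ of a $\limsup$ in $j$, one can select $\varepsilon>0$ small enough that $\#G_j(h,\varepsilon) \leq 2^{(\rho_{\vec{c}}^{(p)}(h)+2\delta)j}$ for all sufficiently large $j$. Since $A_h \subseteq \bigcup_{j\geq N} \bigcup_{\lambda \in G_j(h,\varepsilon)} \lambda$ for every $N$, the $s$-dimensional Hausdorff outer measure at scale $2^{-N}$ is bounded by $\sum_{j\geq N} 2^{(\rho_{\vec{c}}^{(p)}(h)+2\delta - s)j}$, a geometric series tending to $0$ as $N \to +\infty$. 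Hence $\mathcal{H}^s(A_h) = 0$ for every such $s$, which gives $\dim_{\mathcal{H}} A_h \leq \rho_{\vec{c}}^{(p)}(h)$ as required. There is no genuine obstacle: the only delicate point is to exchange the two limits in the definition of $\rho_{\vec{c}}^{(p)}$ in order to extract a single $\varepsilon$ producing a uniform-in-$j$ bound on $\#G_j(h,\varepsilon)$.
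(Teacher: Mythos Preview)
Your argument is correct and is precisely the classical large-deviation covering argument from \cite{largeDev} that the paper invokes: you use Lemma~\ref{lm_spectre_densite} to trap $A_h$ inside $\limsup_j \bigcup_{\lambda\in G_j(h,\varepsilon)}\lambda$, then bound the Hausdorff measure via the cardinality estimate on $G_j(h,\varepsilon)$ coming from the definition of $\rho_{\vec{c}}^{(p)}(h)$. The step you flag as ``exchanging limits'' is in fact just the monotonicity in $\varepsilon$ of $\limsup_j j^{-1}\log_2\#G_j(h,\varepsilon)$, so choosing a single small $\varepsilon$ with a uniform-in-$j$ bound is immediate.
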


The central part of this section is therefore to bound the large-deviation estimates of restricted $p$-leaders by our formalism. We will need the following Lemma, which enhances a result of \cite{pexp2} stating that for every $h\geq\frac{-1}{p}$,
\[
\mathscr{D}^{(p)}_f(h)\leq hp+1.
\]

\begin{lemma}\label{sup_B}
For every $h\geq \frac{-1}{p}$, 
\[
\nu_{\vec{c}}^{(p),\ast}(h)\leq hp+1.
\]
\end{lemma}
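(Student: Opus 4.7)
The plan is to use a Chebyshev-type counting argument: bound $\sum_{\lambda\in\Lambda_j}(e_\lambda^{(p)})^p$ from above, then deduce a bound on the number of dyadic intervals whose restricted $p$-leader exceeds $2^{-(h+\varepsilon)j}$ via a first-moment inequality.

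First I would fix $\varepsilon>0$, set $N_j(\varepsilon)=\#\{\lambda\in\Lambda_j:e_\lambda^{(p)}\geq 2^{-(h+\varepsilon)j}\}$, and observe that each such $\lambda$ contributes at least $2^{-(h+\varepsilon)jp}$ to the sum $\sum_{\lambda\in\Lambda_j}(e_\lambda^{(p)})^p$, giving $N_j(\varepsilon)\cdot 2^{-(h+\varepsilon)jp}\leq \sum_{\lambda\in\Lambda_j}(e_\lambda^{(p)})^p$. To control the right-hand side, I would dominate the supremum inside the definition of $e_\lambda^{(p)}$ by the sum of its (nonnegative) terms,
\[
(e_\lambda^{(p)})^p\leq \sum_{j'\geq j}2^{-(j'-j)}\sum_{\lambda'\in\Lambda_{j'},\,\lambda'\subseteq\lambda}|c_{\lambda'}|^p,
\]
and then sum in $\lambda$. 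The fact that, for each $j'\geq j$, the subtrees below distinct $\lambda\in\Lambda_j$ partition $\Lambda_{j'}$ then yields
\[
\sum_{\lambda\in\Lambda_j}(e_\lambda^{(p)})^p\leq \sum_{j'\geq j}2^{-(j'-j)}\sum_{\lambda'\in\Lambda_{j'}}|c_{\lambda'}|^p.
\]

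The quantitative decay of the inner sum is where the standing hypothesis $p<p_0(f)$ of the section enters: since $\eta_f(p)>0$, for any $0<\eta<\eta_f(p)$ and all $j'$ large enough, $\sum_{\lambda'\in\Lambda_{j'}}|c_{\lambda'}|^p\leq 2^{(1-\eta)j'}$. The geometric tail $\sum_{j'\geq j}2^{j-\eta j'}$ is then summable and bounded by a constant multiple of $2^{(1-\eta)j}$, so $\sum_{\lambda\in\Lambda_j}(e_\lambda^{(p)})^p\leq C\cdot 2^{(1-\eta)j}$. Plugging this back gives $N_j(\varepsilon)\leq C\cdot 2^{(hp+\varepsilon p+1-\eta)j}$; taking the logarithm in base $2$, dividing by $j$, passing to the $\limsup_{j\to\infty}$ and then to $\varepsilon\to 0^+$ produces $\nu^{(p),\ast}_{\vec c}(h)\leq hp+1-\eta$, which is in fact sharper than the announced bound since $\eta>0$.

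The only subtle step is the sup-to-sum domination, which is a priori lossy; what saves the estimate is precisely that $\eta_f(p)>0$ makes the tail in $j'$ summable. Once this summability is in place, the rest of the argument is routine bookkeeping, and the final bound even gains an $\eta_f(p)$ margin over the claim.
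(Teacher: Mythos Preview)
Your proof is correct and follows essentially the same approach as the paper: a Chebyshev/Markov count followed by the bound $\sum_{\lambda\in\Lambda_j}(e_\lambda^{(p)})^p\lesssim 2^{(1-\delta)j}$ obtained from $\eta_f(p)>0$. The paper compresses your sup-to-sum step and the geometric tail into a single displayed inequality, but the underlying argument is identical, and both versions actually yield the slightly stronger bound $\nu_{\vec{c}}^{(p),\ast}(h)\leq hp+1-\delta$ for any $\delta<\eta_f(p)$.
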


\begin{proof}
Fix $h\geq\frac{-1}{p}$. Since $\eta_f(p)>0$, there exist $\delta>0$ and $J\in\N$ such that for every $j\geq J$,
\[
2^{-j}\sum_{\lambda\in\Lambda_j}\modu{c_\lambda}^p<2^{-\delta j}.
\]
It follows that for every $j\geq J$ and every $\varepsilon>0$, one has 
\[
\#\left\{\lambda\in\Lambda_j:e_\lambda^{(p)}\geq 2^{-(h+\varepsilon)j}\right\} \leq 2^{(h+\varepsilon)pj}\sum_{\lambda\in\Lambda_j}\left(\sup_{j'\geq j}\;\sum_{\lambda'\subseteq\lambda}\modu{c_{\lambda'}}^p2^{-(j'-j)}\right)<2^{(hp+1+\varepsilon p-\delta)j},
\]
hence the conclusion.
\end{proof}

\begin{thm}\label{petitsoleil}
For every $h\geq\frac{-1}{p}$, we have
\[
\rho_{\vec{c}}^{(p),\ast}(h)\leq\left(h+\frac{1}{p}\right)\sup_{\alpha\in\ocint{\frac{-1}{p},h}}\frac{\rho_{\vec{c}}(\alpha)}{\alpha+\frac{1}{p}}.
\]
\end{thm}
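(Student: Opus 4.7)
My approach is to bound, for each $\varepsilon>0$, the cardinality of $\mathcal F_j:=\{\lambda\in\Lambda_j : e_\lambda^{(p)}\ge 2^{-(h+\varepsilon)j}\}$, and then to send $\varepsilon\to 0^+$; since $\rho_{\vec{c}}^{(p),\ast}(h)\le \nu_{\vec{c}}^{(p),\ast}(h)$, bounding $\#\mathcal F_j$ also bounds the density. Writing $S_{j'}(\lambda):=\sum_{\lambda'\subseteq\lambda,\,|\lambda'|=2^{-j'}}|c_{\lambda'}|^p$, each $\lambda\in\mathcal F_j$ admits some $j'(\lambda)\ge j$ with $2^{-(j'(\lambda)-j)}S_{j'(\lambda)}(\lambda)\ge 2^{-p(h+\varepsilon)j}$. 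I would first exploit $\eta_f(p)>\eta_0>0$, which gives $\sum_{\lambda'\in\Lambda_{j'}}|c_{\lambda'}|^p\le 2^{(1-\eta_0)j'}$ for $j'$ large; summing over $\lambda\in\Lambda_j$ restricts $j'(\lambda)$ to $[j,j_0^*]$ with $j_0^*=O(j)$, while forcing each relevant magnitude exponent $\alpha_{\lambda'}:=-\log_2|c_{\lambda'}|/j'$ into a bounded range $I_\varepsilon\subseteq[-1/p+\eta_0/p,\,h+\varepsilon+1]$ (coefficients of exponent above $h+\varepsilon+1$ contribute at most $2^{-pj}\cdot 2^{-p(h+\varepsilon)j}$ to $2^{-(j'-j)}S_{j'}(\lambda)$ and are negligible).

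Next, I would discretise $I_\varepsilon$ into $O(1/\varepsilon')$ buckets of width $\varepsilon'$, and for each centre $\alpha$ and scale $j'$ set
\[
n_\lambda^{(j')}(\alpha):=\#\{\lambda'\subseteq\lambda,\,|\lambda'|=2^{-j'},\,|c_{\lambda'}|\in[2^{-(\alpha+\varepsilon')j'},2^{-(\alpha-\varepsilon')j'}]\}.
\]
Pigeonholing over the $K_j=O(j/\varepsilon')$ pairs $(j',\alpha)$ assigns to each $\lambda\in\mathcal F_j$ a dominant pair $(j'(\lambda),\alpha(\lambda))$ such that
\[
n_\lambda^{(j'(\lambda))}(\alpha(\lambda))\;\ge\;\tfrac{1}{2K_j}\,2^{(p\alpha(\lambda)+1)j'(\lambda)-(p(h+\varepsilon)+1)j}.
\]
A double-count over $\lambda$ at fixed $(j',\alpha)$, combined with the density estimate $\#\{\lambda'\in\Lambda_{j'}:|c_{\lambda'}|\in[2^{-(\alpha+\varepsilon')j'},2^{-(\alpha-\varepsilon')j'}]\}\le 2^{(\rho_{\vec{c}}(\alpha;\varepsilon')+\delta)j'}$ (valid for $j'$ large, with $\rho_{\vec{c}}(\alpha;\varepsilon')$ denoting the inner limsup from Definition \ref{def:prof}, so that $\rho_{\vec{c}}(\alpha;\varepsilon')\searrow\rho_{\vec{c}}(\alpha)$ as $\varepsilon'\to 0^+$), yields a pigeonhole upper bound whose $j'$-exponent is $\rho_{\vec{c}}(\alpha;\varepsilon')+\delta-p\alpha-1\le p\varepsilon'+\delta-\eta_0$ (using $\rho_{\vec{c}}(\alpha;\varepsilon')\le p(\alpha+\varepsilon')+1-\eta_0$ from $\eta_f(p)>\eta_0$) and thus strictly negative for $\varepsilon',\delta$ small, whereas the trivial bound $2^{(\rho_{\vec{c}}(\alpha;\varepsilon')+\delta)j'}$ is increasing in $j'$.

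The crucial calculation is that the two bounds coincide exactly at the crossover scale $j^*_\alpha:=(h+\varepsilon+1/p)j/(\alpha+1/p)$, producing the uniform estimate
\[
\#\bigl\{\lambda\in\mathcal F_j:\text{assigned }(j',\alpha)\bigr\}\;\le\;2^{(h+\varepsilon+1/p)(\rho_{\vec{c}}(\alpha;\varepsilon')+\delta)\,j/(\alpha+1/p)},
\]
with $j^*_\alpha\in[j,j_0^*]$ precisely when $\alpha\in(-1/p,\,h+\varepsilon]$. Summing over the polynomially many pairs $(j',\alpha)$ and applying $\limsup_{j\to\infty}\frac{1}{j}\log_2$ gives
\[
\nu_{\vec{c}}^{(p),\ast}(h)\;\le\;(h+\varepsilon+1/p)\sup_{\alpha\in(-1/p,\,h+\varepsilon]}\frac{\rho_{\vec{c}}(\alpha;\varepsilon')+\delta}{\alpha+1/p},
\]
and sending $\delta\to 0^+$, then $\varepsilon'\to 0^+$, and finally $\varepsilon\to 0^+$ would yield the claimed upper bound.

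The main obstacle is the passage $\varepsilon'\to 0^+$, where one needs the interchange $\inf_{\varepsilon'>0}\sup_\alpha \rho_{\vec{c}}(\alpha;\varepsilon')/(\alpha+1/p)=\sup_\alpha\rho_{\vec{c}}(\alpha)/(\alpha+1/p)$. I would handle this exactly as in the classical H\"older analogue of \cite{largeDev}: $\varepsilon'\mapsto\rho_{\vec{c}}(\alpha;\varepsilon')$ is non-increasing with pointwise limit $\rho_{\vec{c}}(\alpha)$, and given a minimising sequence $\varepsilon'_n\to 0^+$ with near-maximisers $\alpha_n\in(-1/p,h+\varepsilon]$, compactness extracts $\alpha_n\to\alpha^*$ and the upper semicontinuity of $\rho_{\vec{c}}$ (inherent in its definition as an infimum of limsup's) yields $\limsup_n \rho_{\vec{c}}(\alpha_n;\varepsilon'_n)/(\alpha_n+1/p)\le \rho_{\vec{c}}(\alpha^*)/(\alpha^*+1/p)$, closing the argument.
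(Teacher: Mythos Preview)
Your approach is correct and constitutes a genuinely different argument from the paper's. The paper works directly with the two-sided set $\Lambda_j^{(p)}(h,\varepsilon)=\{\lambda: 2^{-(h+\varepsilon)j}\le e_\lambda^{(p)}\le 2^{-(h-\varepsilon)j}\}$ and argues \emph{existentially}: after discretising the scale into a finite set $\mathcal A$ (multiples of $1/m$) and the magnitude into finitely many levels, it pigeonholes over $\lambda$ to find one pair $(A,l)$ carried by at least $2^{(\rho_{\vec c}^{(p),\ast}(h)-2\delta)j_n}$ intervals, and then converts the resulting lower bound on the number of coefficients of order $l\beta-\tfrac1p$ at the common scale $j'_n\approx Aj_n$ into a lower bound on $\rho_{\vec c}(l\beta-\tfrac1p)/(l\beta)$. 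This forces the somewhat delicate optimisation in Lemma~\ref{lm:tech}. Your argument instead upper-bounds $\#\mathcal F_j$ by a \emph{counting} scheme: for each pair $(j',\alpha)$ you have two upper bounds on $\#\{\lambda:\text{assigned to }(j',\alpha)\}$ --- the trivial one $N_{j'}(\alpha)\le 2^{Rj'}$ and the double-counting one $K_\alpha\, 2^{(R-p\alpha+p\varepsilon'-1)j'+(p(h+\varepsilon)+1)j}$ --- and you observe that the minimum is maximised at the crossover $j^*_\alpha=\frac{h+\varepsilon+1/p}{\alpha-\varepsilon'+1/p}\,j$, which lies in $[j,j_0^\ast]$ precisely because $\alpha\in[-1/p+\eta_0/p,\,h+\varepsilon]$. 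This replaces the paper's Lemma~\ref{lm:tech} by a transparent monotonicity argument and incidentally proves the stronger bound $\nu_{\vec c}^{(p),\ast}(h)\le (h+\tfrac1p)\sup_\alpha\rho_{\vec c}(\alpha)/(\alpha+\tfrac1p)$.

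Two small points. First, your factor $1/(2K_j)$ in the lower bound for $n_\lambda^{(j'(\lambda))}(\alpha(\lambda))$ should be $1/(2K_\alpha)$ with $K_\alpha=O(1/\varepsilon')$: since $e_\lambda^{(p)}$ is a supremum over $j'$ (not a sum), you first select the near-optimal $j'(\lambda)$ and only then pigeonhole over the $K_\alpha$ magnitude buckets at that scale. This is harmless, as both factors are polynomial in $j$. Second, your tail cutoff $\alpha>h+\varepsilon+1$ relies on $h+\varepsilon+1>0$, which can fail when $p<1$ and $h$ is close to $-1/p$; simply replace it by any fixed $M>\max(0,h+\varepsilon)$. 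Your treatment of the interchange $\varepsilon'\to 0^+$ via compactness of $[-1/p+\eta_0/p,\,h+\varepsilon]$ and the fact that for any $\eta>0$ one eventually has $[\alpha_n-\varepsilon'_n,\alpha_n+\varepsilon'_n]\subseteq[\alpha^\ast-\eta,\alpha^\ast+\eta]$, hence $\limsup_n\rho_{\vec c}(\alpha_n;\varepsilon'_n)\le\rho_{\vec c}(\alpha^\ast;\eta)\downarrow\rho_{\vec c}(\alpha^\ast)$, is correct.
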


We decompose the proof of Theorem~\ref{petitsoleil} into several Lemmas. First, we write 
\[
h_{\min}=\frac{\eta_f(p)}{p}-\frac{1}{p}
\]
and treat the case where $h<h_{\min}$.

\begin{lemma}\label{lm:small_h}
For every $h<h_{\min}$, we have
\[
\rho_{\vec{c}}^{(p),\ast}(h)=-\infty.
\]
\end{lemma}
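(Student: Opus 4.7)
The plan is to run the same Markov-type argument as in Lemma~\ref{sup_B}, but this time squeezing out the full strength of $\eta_f(p) > hp+1$, which is exactly what the assumption $h < h_{\min}$ encodes. The conclusion will be that the counting set defining the restricted $p$-leader density is \emph{eventually empty}, not merely subexponential, so the limsup defining $\rho_{\vec{c}}^{(p),\ast}(h)$ is actually $-\infty$.

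First I would rewrite the hypothesis: $h < h_{\min}$ amounts to $hp+1 < \eta_f(p)$, so I can fix some $\delta \in (hp+1,\eta_f(p))$. By definition of $\eta_f(p)$ as a liminf, there exists $J \in \N$ such that for every $j' \geq J$,
\[
\sum_{\lambda' \in \Lambda_{j'}} |c_{\lambda'}|^p < 2^{(1-\delta)j'}.
\]
Next, I would control the total mass of restricted $p$-leaders at scale $j$. Bounding the supremum over $j' \geq j$ by the sum and swapping the order of summation gives
\[
\sum_{\lambda \in \Lambda_j} \bigl(e_\lambda^{(p)}\bigr)^p
\leq \sum_{j'\geq j} 2^{-(j'-j)} \sum_{\lambda' \in \Lambda_{j'}} |c_{\lambda'}|^p
\leq 2^j \sum_{j'\geq j} 2^{-\delta j'}
= \frac{2^{(1-\delta)j}}{1-2^{-\delta}},
\]
valid for $j \geq J$, the key point being that the geometric series converges thanks to $\delta > 0$.

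Now, for any $\varepsilon > 0$, a Markov-type inequality yields
\[
\#\bigl\{\lambda \in \Lambda_j : e_\lambda^{(p)} \geq 2^{-(h+\varepsilon)j}\bigr\}
\leq 2^{(h+\varepsilon)pj} \sum_{\lambda \in \Lambda_j} \bigl(e_\lambda^{(p)}\bigr)^p
\leq C\, 2^{\bigl((h+\varepsilon)p + 1 - \delta\bigr)j}
\]
for $j \geq J$, with $C$ independent of $j$. Since $hp+1-\delta < 0$ by the choice of $\delta$, I can pick $\varepsilon > 0$ small enough that $(h+\varepsilon)p + 1 - \delta < 0$ as well. The right-hand side then tends to $0$, and as the left-hand side is a non-negative integer, it must actually be $0$ for all sufficiently large $j$.

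Finally, since the counting set used in the definition of $\rho_{\vec{c}}^{(p),\ast}(h)$ is included in $\{\lambda : e_\lambda^{(p)} \geq 2^{-(h+\varepsilon)j}\}$, it is also empty for all large $j$, so with the convention $\log_2 0 = -\infty$ the limsup is $-\infty$, and taking $\varepsilon \to 0^+$ gives $\rho_{\vec{c}}^{(p),\ast}(h) = -\infty$. There is no genuine obstacle here; the only thing to watch is to pick $\delta$ strictly below $\eta_f(p)$ and strictly above $hp+1$ (possible precisely because of $h < h_{\min}$), and then $\varepsilon$ small enough to preserve the negativity of the exponent, which is a straightforward continuity argument.
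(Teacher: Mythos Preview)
Your proof is correct and uses essentially the same idea as the paper, namely exploiting $\eta_f(p)>(h+\varepsilon)p+1$ to force the count of large restricted $p$-leaders to vanish for large $j$. The paper's version is marginally more direct: instead of summing $(e_\lambda^{(p)})^p$ over $\lambda\in\Lambda_j$ and applying Markov, it bounds each individual leader by replacing $\sum_{\lambda'\subseteq\lambda}$ with the full sum over $\Lambda_{j'}$ and then taking the supremum in $j'$, obtaining $e_\lambda^{(p)}\leq 2^{-(h+\varepsilon)j}$ for \emph{every} $\lambda$, so the counting set is empty outright.
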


\begin{proof}
For every $h<h_{\min}$ and every $\varepsilon>0$ such that $h+\varepsilon<h_{\min}$, since $p\left(h+\varepsilon+\frac{1}{p}\right)<\eta_f(p)$, there exists $J\in\N$ such that for all $j'\geq J$, 
\[
2^{-j'}\sum_{\lambda\in\Lambda_{j'}}\modu{c_{\lambda'}}^p<2^{-\left(h+\varepsilon+\frac{1}{p}\right)pj'}.
\]
Therefore for all $j\geq J$ and all $\lambda\in\Lambda_j$,
\[
e_\lambda^{(p)}\leq2^{\frac{j}{p}}\sup_{j'\geq j}2^{-\left(h+\varepsilon+\frac{1}{p}\right)j'}=2^{-(h+\varepsilon)j}.
\]
The conclusion follows. 
\end{proof}

This case being settled, we fix $h\geq h_{\min}$, we assume 
$\rho^{(p),\ast}_{\vec{c}}(h)>-\infty$
and we consider $\varepsilon>0$ small enough.
For every $j \in \mathbb{N}$, we are interested in the set of dyadic intervals 
\(\Lambda_j^{(p)}(h,\varepsilon)\) defined by
\[
\Lambda_j^{(p)}(h,\varepsilon) := \bigl\{ \lambda \in \Lambda_j \ : \ 2^{-(h+\varepsilon)j} \leq e_\lambda^{(p)} \leq 2^{-(h-\varepsilon)j} \bigr\}.
\]
For every such interval $\lambda \in \Lambda_j^{(p)}(h,\varepsilon)$, in order to derive from the relation
$$
2^{-(h+\varepsilon)j} \leq e_\lambda^{(p)} \leq 2^{-(h-\varepsilon)j}
$$
 a control over the wavelet coefficients, we need to determine the "dominating behaviour" of $e_\lambda^{(p)}$, i.e. to find a scale $j'(\lambda)$ and an order $\alpha(\lambda)$ such that
\[
e_\lambda^{(p)}\sim \left(\sum_{\lambda'\in\Lambda_{j'(\lambda)},\,\lambda'\subseteq\lambda,\,\modu{c_{\lambda'}}\sim 2^{-\alpha(\lambda)j'(\lambda)}}\modu{c_{\lambda'}}^p2^{-(j'(\lambda)-j)}\right)^{\frac{1}{p}}.
\]

\medskip

Let us start by showing that such a scale $j'(\lambda)$ exists and is bounded by $Cj$ for a positive constant $C$. To that end, we fix any exponent 
\[
    \alpha_0 \in \ooint{\frac{-1}{p},h_{\min}}.
\]
\begin{lemma}\label{lm:scale}
There exists $J\in\N$ such that for every $j\geq J$ and every $\lambda\in\Lambda_j^{(p)}(h,\varepsilon)$, there exists $j'(\lambda)\geq j$ such that
\begin{equation}\label{sum_order}
2^{-\left(h+\frac{3\varepsilon}{2}\right)pj}\leq\sum_{\lambda'\subseteq\lambda}\modu{c_{\lambda'}}^p2^{-(j'(\lambda)-j)}\leq 2^{-(h-\varepsilon)pj}
\end{equation}
and
\begin{equation}\label{scale_max}
j'(\lambda)\leq \frac{h+2\varepsilon+\frac{1}{p}}{\alpha_0+\frac{1}{p}} j.
\end{equation}
\end{lemma}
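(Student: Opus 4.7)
The plan is to exploit two complementary pieces of information: the defining inequalities $2^{-(h+\varepsilon)j}\leq e_\lambda^{(p)}\leq 2^{-(h-\varepsilon)j}$ provided by $\lambda\in\Lambda_j^{(p)}(h,\varepsilon)$ force the partial sums
\[
S_{j'}(\lambda):=\sum_{\lambda'\in\Lambda_{j'},\,\lambda'\subseteq\lambda}\modu{c_{\lambda'}}^p\,2^{-(j'-j)}
\]
to take a prescribed order of magnitude at \emph{some} index $j'\geq j$ realizing the supremum defining $e_\lambda^{(p)}$, while the hypothesis $\alpha_0<h_{\min}$, translated through the scaling function, forces the total mass $\sum_{\lambda''\in\Lambda_{j'}}\modu{c_{\lambda''}}^p$ to decay fast enough with $j'$ to rule out scales much larger than $j$.

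The first step is to convert $\alpha_0<h_{\min}=\eta_f(p)/p-1/p$ into a usable scale-by-scale estimate. Since $1+p\alpha_0<\eta_f(p)$, I would pick $\delta>0$ with $1+p\alpha_0+\delta\leq\eta_f(p)$ and invoke the definition of $\eta_f$ to produce $J_0\in\N$ such that
\[
\sum_{\lambda''\in\Lambda_{j'}}\modu{c_{\lambda''}}^p\;\leq\; 2^{j'(1-\eta_f(p)+\delta)}\;\leq\; 2^{-p\alpha_0 j'}\qquad\text{for every }j'\geq J_0.
\]
This is the only genuinely analytic input; everything below is bookkeeping on exponents.

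Next, for $j\geq J:=J_0$ and $\lambda\in\Lambda_j^{(p)}(h,\varepsilon)$, existence of some $j'(\lambda)\geq j$ with $S_{j'(\lambda)}(\lambda)\geq 2^{-(h+3\varepsilon/2)pj}$ follows from
\[
\sup_{j'\geq j}S_{j'}(\lambda)\;=\;(e_\lambda^{(p)})^p\;\geq\; 2^{-(h+\varepsilon)pj}\;>\; 2^{-(h+3\varepsilon/2)pj},
\]
by picking any $j'$ producing a value strictly between these two thresholds. The upper bound in \eqref{sum_order} is then automatic from $S_{j'(\lambda)}(\lambda)\leq (e_\lambda^{(p)})^p\leq 2^{-(h-\varepsilon)pj}$. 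To secure \eqref{scale_max}, I would combine the first step with the crude bound
\[
S_{j'}(\lambda)\;\leq\; 2^{-(j'-j)}\sum_{\lambda''\in\Lambda_{j'}}\modu{c_{\lambda''}}^p\;\leq\; 2^{j}\cdot 2^{-(p\alpha_0+1)j'}\qquad(j'\geq J_0);
\]
a direct computation shows this right-hand side is strictly less than $2^{-(h+3\varepsilon/2)pj}$ as soon as $j'>\frac{h+3\varepsilon/2+1/p}{\alpha_0+1/p}\,j$, a condition implied by $j'>\frac{h+2\varepsilon+1/p}{\alpha_0+1/p}\,j$ since $2\varepsilon>3\varepsilon/2$. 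Therefore any $j'$ violating \eqref{scale_max} also violates the lower threshold of \eqref{sum_order}, and one may select $j'(\lambda)$ satisfying both bounds simultaneously.

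The principal subtlety, rather than a genuine obstacle, is aligning the thresholds: one must verify that $\frac{h+2\varepsilon+1/p}{\alpha_0+1/p}>1$ so that every $j'$ exceeding this quantity automatically lies in the regime $j'\geq J_0$ where the scaling-function estimate is valid; this is immediate from $h\geq h_{\min}>\alpha_0$ together with $\varepsilon>0$. The only creative choice in the argument is the intermediate exponent $-(h+3\varepsilon/2)pj$, inserted strictly between $-(h+\varepsilon)pj$ (so that it sits under the supremum defining $e_\lambda^{(p)}$) and $-(h+2\varepsilon)pj$ (so that it propagates cleanly through the global scaling-function estimate with the prescribed scale bound).
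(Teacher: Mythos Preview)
Your proof is correct and follows essentially the same approach as the paper's: both extract from $\alpha_0<h_{\min}$ the global decay estimate $\sum_{\lambda''\in\Lambda_{j'}}\modu{c_{\lambda''}}^p\leq 2^{-p\alpha_0 j'}$ for $j'\geq J$, then choose $j'(\lambda)$ as a near-maximizer in the definition of $e_\lambda^{(p)}$ to obtain \eqref{sum_order}, and finally compare the lower bound of \eqref{sum_order} against this global estimate to force \eqref{scale_max}. The only cosmetic difference is that the paper records the intermediate inequality $2^{-j'(\lambda)}\sum_{\lambda'\subseteq\lambda}\modu{c_{\lambda'}}^p\geq 2^{-(h+2\varepsilon+1/p)pj}$ explicitly and reads off \eqref{scale_max} directly, whereas you phrase the same comparison as a contrapositive (any $j'$ violating \eqref{scale_max} also violates the lower threshold in \eqref{sum_order}).
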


\begin{proof}
Using the relation $p\big(\alpha_0+\frac{1}{p}\big)<\eta_f(p)$, we get the existence of $J\in\N$ such that 
\begin{equation}\label{sum_max}
2^{-j'}\sum_{\lambda'\in\Lambda_{j'}}\modu{c_{\lambda'}}^p<2^{-\left(\alpha_0+\frac{1}{p}\right)pj'} \quad \forall j' \geq J. 
\end{equation}
Moreover, for every $j\geq J$ and every $\lambda\in\Lambda_j^{(p)}(h,\varepsilon)$,
there exists $j'(\lambda)\geq j$ such that
\[
2^{-\left(h+\frac{3\varepsilon}{2}\right)j}\leq \left(\sum_{\lambda'\subseteq\lambda}\modu{c_{\lambda'}}^p2^{-(j'(\lambda)-j)}\right)^{\frac{1}{p}}\leq 2^{-(h-\varepsilon)j},
\]
hence \eqref{sum_order} and
\begin{equation}\label{sum_min}
2^{-j'(\lambda)}\sum_{\lambda'\subseteq\lambda}\modu{c_{\lambda'}}^p
\geq2^{-\left(h+2\varepsilon+\frac{1}{p}\right)pj}.
\end{equation}
Inequality~\eqref{sum_max} applied to $j'=j'(\lambda)$ and Inequality~\eqref{sum_min} directly imply
Condition~\eqref{scale_max}.
\end{proof}

Now, we discretize the scales $j'(\lambda)$ by considering multiples of the form $A(\lambda)j$, where $A(\lambda)$ belongs to a set $\mathcal{A}$ independent of $j$. To this end, fix $m\in\N$ sufficiently large, and define
\[
\mathcal{A}=\left\{a+\frac{b}{m}:a\in\left\{1,\ldots,\frac{N}{m}\right\}, b\in\{0,\ldots,m-1\}\right\},
\]
where $N\in\N$ is chosen such that  $$\frac{N}{m}+1=\left\lceil\frac{h+2\varepsilon+\frac{1}{p}}{\alpha_0+\frac{1}{p}}\right\rceil.$$
With this notation, the following result is an immediate consequence of Lemma~\ref{lm:scale}.

\begin{cor}\label{cor:scale}
To any $j\geq J$ and any $\lambda\in\Lambda_j^{(p)}(h,\varepsilon)$, we can associate $A(\lambda)\in\mathcal{A}$ such that Equation~\eqref{sum_order} is satisfied for 
\[
j'(\lambda)\in\ccint{A(\lambda)j,\left(A(\lambda)+\frac{1}{m}\right)j}.
\]
\end{cor}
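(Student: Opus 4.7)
The strategy is straightforward: Lemma~\ref{lm:scale} already furnishes a scale $j'(\lambda)$ satisfying Equation~\eqref{sum_order} together with a linear upper bound in $j$, so the only remaining task is to round the ratio $j'(\lambda)/j$ onto the prescribed $1/m$-grid encoded by $\mathcal{A}$.

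First I would fix $j \geq J$ and $\lambda \in \Lambda_j^{(p)}(h,\varepsilon)$ and invoke Lemma~\ref{lm:scale} to obtain a scale $j'(\lambda) \geq j$ for which \eqref{sum_order} holds and $j'(\lambda)/j \leq (h+2\varepsilon+1/p)/(\alpha_0+1/p)$. Setting $t := j'(\lambda)/j$, the defining relation $N/m + 1 = \lceil (h+2\varepsilon+1/p)/(\alpha_0+1/p)\rceil$ dominates the upper bound from Lemma~\ref{lm:scale}, while $t \geq 1$ because $j'(\lambda) \geq j$, so $t \in [1, N/m+1]$.

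Next I would observe that the set $\mathcal{A}$ induces a covering of $[1, N/m+1]$ by subintervals of length $1/m$: for each pair $(a,b) \in \{1,\ldots,N/m\}\times\{0,\ldots,m-1\}$, the point $A := a + b/m$ lies in $\mathcal{A}$, and the intervals $[A, A+1/m]$ exhaust $[1, N/m+1]$ as $(a,b)$ varies (note that $N/m$ is an integer since the ceiling on the right is). It then suffices to choose $A(\lambda) \in \mathcal{A}$ such that $t \in [A(\lambda), A(\lambda)+1/m]$, which gives $j'(\lambda) \in [A(\lambda)j, (A(\lambda)+1/m)j]$, exactly the stated conclusion.

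There is no genuine obstacle here: the whole argument reduces to rounding a ratio belonging to a bounded interval onto a fixed grid of width $1/m$. All analytical content was already captured by Lemma~\ref{lm:scale}; the corollary merely repackages its output into a form in which the discretizing parameter $A(\lambda)$ ranges over a finite, $j$-independent set, which is what will later enable the uniform control needed to bound $\rho_{\vec{c}}^{(p),\ast}(h)$ in Theorem~\ref{petitsoleil}.
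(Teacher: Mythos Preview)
Your proposal is correct and matches the paper's approach: the paper states only that the corollary is an immediate consequence of Lemma~\ref{lm:scale}, and your argument---rounding the ratio $j'(\lambda)/j\in[1,N/m+1]$ onto the $1/m$-grid given by $\mathcal{A}$---is precisely the natural (and only) way to spell this out.
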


Secondly, we need to determine which order $\alpha(\lambda)$ dominates the sum at scale $j'(\lambda)$, in the sense that 
\[
\sum_{\lambda'\in\Lambda_{j'(\lambda)},\,\lambda'\subseteq\lambda}\modu{c_{\lambda'}}^p\sim\sum_{\lambda'\in\Lambda_{j'(\lambda)},\,\lambda'\subseteq\lambda,\,\modu{c_{\lambda'}}\sim 2^{-\alpha(\lambda)j'(\lambda)}}\modu{c_{\lambda'}}^p
\]
From now on, we assume $\alpha_0+\frac{1}{p}>3\varepsilon$. Moreover, we fix $\beta>0$ and $L\in\N_0$ such that $\beta<\alpha_0+\frac{1}{p}-3\varepsilon$, $\beta<\frac{1}{m}$ and $\frac{h+2\varepsilon+\frac{1}{p}}{\beta}=L+1$. The following lemma discretizes the different possible orders that can be reached by coefficients $\modu{c_{\lambda'}}$ with $\lambda'\in\Lambda_{j'(\lambda)}$ and $\lambda'\subseteq\lambda$.

\begin{lemma}\label{lm:list_orders}
For every $j\geq J$, every $\lambda\in\Lambda_j^{(p)}(h,\varepsilon)$ and every $\lambda'\in\Lambda_{j'(\lambda)}$ with $\lambda'\subseteq\lambda$, either $\modu{c_{\lambda'}}<2^{-(h+2\varepsilon)j}$, or there exists $l(\lambda')\in\{1,\ldots,L\}$ such that
\[
2^{-\left(l(\lambda')\beta+\frac{1}{m}\right)j'(\lambda)}2^{\frac{j'(\lambda)}{p}}\leq\modu{c_{\lambda'}}\leq 2^{-l(\lambda')\beta j'(\lambda)}2^{\frac{j'(\lambda)}{p}}.
\]
Moreover, the first case cannot happen simultaneously for all the intervals  $\lambda'$ considered.
\end{lemma}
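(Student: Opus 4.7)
The plan is to analyse each coefficient $c_{\lambda'}$ with $\lambda' \subseteq \lambda$ in $\Lambda_{j'(\lambda)}$ individually, by introducing the exponent $\alpha_{\lambda'}\in\R$ determined by $|c_{\lambda'}| = 2^{(1/p - \alpha_{\lambda'})\,j'(\lambda)}$. With this normalisation, the desired bracket rewrites as $\alpha_{\lambda'} \in [l\beta,\, l\beta + 1/m]$ for some $l(\lambda') \in \{1,\ldots, L\}$, so it is natural to try $l(\lambda') = \lfloor \alpha_{\lambda'}/\beta \rfloor$, capped at $L$ if necessary. Everything then reduces to proving a two-sided control $\beta < \alpha_{\lambda'} \leq (L+1)\beta$, with the constraint $\beta < 1/m$ absorbing the discrepancy between consecutive brackets.

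For the lower control, I would reuse Inequality~\eqref{sum_max} from the proof of Lemma~\ref{lm:scale}, applied at scale $j'=j'(\lambda)$: it gives $|c_{\lambda'}|^p \leq \sum_{\lambda''\in\Lambda_{j'(\lambda)}} |c_{\lambda''}|^p < 2^{j'(\lambda)\,-\,(\alpha_0 + 1/p)p\,j'(\lambda)}$, hence $|c_{\lambda'}| < 2^{-\alpha_0 j'(\lambda)}$, which translates into $\alpha_{\lambda'} > \alpha_0 + 1/p$. The choice $\beta < \alpha_0 + 1/p - 3\varepsilon$ then forces $\alpha_{\lambda'} > \beta$, so $\lfloor \alpha_{\lambda'}/\beta\rfloor \geq 1$ and the lower inequality $l(\lambda')\beta \leq \alpha_{\lambda'}$ of the bracket holds. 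The upper control comes directly from the working hypothesis $|c_{\lambda'}| \geq 2^{-(h+2\varepsilon)j}$, which unwinds into $\alpha_{\lambda'} \leq 1/p + (h+2\varepsilon)\,j/j'(\lambda)$; combining this with the bounds $j \leq j'(\lambda) \leq \frac{h+2\varepsilon+1/p}{\alpha_0+1/p}\,j$ from Lemma~\ref{lm:scale}, one deduces $\alpha_{\lambda'} \leq h+2\varepsilon+1/p = (L+1)\beta$. Since $\beta < 1/m$, the intervals $[l\beta, l\beta + 1/m]$, $1\leq l\leq L$, overlap enough to cover $[\beta,(L+1)\beta]$, which guarantees the upper bracket $\alpha_{\lambda'} \leq l(\lambda')\beta + 1/m$.

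The second assertion of the lemma is then a one-line consequence of~\eqref{sum_order}. If every $\lambda' \subseteq \lambda$ in $\Lambda_{j'(\lambda)}$ satisfied $|c_{\lambda'}| < 2^{-(h+2\varepsilon)j}$, then summing the $2^{j'(\lambda)-j}$ terms would give
\[
\sum_{\lambda'\subseteq\lambda}|c_{\lambda'}|^p\, 2^{-(j'(\lambda)-j)} < 2^{j'(\lambda)-j}\cdot 2^{-(h+2\varepsilon)pj}\cdot 2^{-(j'(\lambda)-j)} = 2^{-(h+2\varepsilon)pj},
\]
which, since $\varepsilon > 0$, is strictly smaller than $2^{-(h+3\varepsilon/2)pj}$, contradicting~\eqref{sum_order}. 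The main technical obstacle is not conceptual but rather purely bookkeeping: carefully calibrating the discretisation step $\beta$ with respect to $1/m$ so that $\lfloor \alpha_{\lambda'}/\beta\rfloor$ lands in $\{1,\ldots,L\}$, and checking that the corner value $\alpha_{\lambda'} = (L+1)\beta$ is still covered by the bracket $[L\beta, L\beta + 1/m]$ thanks to $\beta < 1/m$.
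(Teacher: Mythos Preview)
Your proof is correct and follows essentially the same strategy as the paper: introduce the exponent $\alpha(\lambda')$ via $|c_{\lambda'}|=2^{(1/p-\alpha(\lambda'))j'(\lambda)}$, bound it in an interval covered by $\bigcup_{l=1}^L[l\beta,l\beta+1/m]$, and rule out the ``all small'' case via the lower bound in~\eqref{sum_order}. The only noteworthy difference is in how you obtain the lower bound on $\alpha_{\lambda'}$: the paper uses the upper inequality in~\eqref{sum_order} together with~\eqref{scale_max} to reach $\alpha(\lambda')\geq\alpha_0+1/p-3\varepsilon$, whereas you invoke~\eqref{sum_max} directly at scale $j'(\lambda)$ to get the slightly sharper $\alpha_{\lambda'}>\alpha_0+1/p$; both land safely above $\beta$ by the standing assumption $\beta<\alpha_0+1/p-3\varepsilon$.
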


\begin{proof}
Fix $j\geq J$ and $\lambda\in\Lambda_j^{(p)}(h,\varepsilon)$. In view of Relation~\eqref{sum_order}, there exists $\lambda'\in\Lambda_{j'(\lambda)}$ such that $\lambda'\subseteq\lambda$ and
\begin{equation}\label{coeff_min}
\modu{c_{\lambda'}}\geq 2^{-(h+2\varepsilon)j},
\end{equation}
from which follows the last statement, and for each $\lambda'\in\Lambda_{j'(\lambda)}$ with $\lambda'\subseteq \lambda$, we must have
\[
\modu{c_{\lambda'}}\leq 2^{-(h-\varepsilon)j}2^{\frac{j'(\lambda)-j}{p}}.
\]
Therefore, to any dyadic interval $\lambda'$ of scale $j'(\lambda)$ with $\lambda'\subseteq\lambda$ and which satisfies \eqref{coeff_min}, if $\alpha(\lambda')\geq 0$ is chosen such that
\begin{equation}\label{coeff_order}
\modu{c_{\lambda'}}=2^{-\alpha(\lambda') j'(\lambda)}2^{\frac{j'(\lambda)}{p}},
\end{equation}
hence
\[
\left(h+\frac{1}{p}-\varepsilon\right)\frac{j}{j'(\lambda)} \leq \alpha(\lambda') \leq (h+2\varepsilon)\frac{j}{j'(\lambda)}+\frac{1}{p}.
\]
Then, Inequality~\eqref{scale_max} implies that
\[
\alpha_0+\frac{1}{p}-3\varepsilon \leq \alpha(\lambda') \leq h+\frac{1}{p}+2\varepsilon.
\]
But
\[
\bigcup_{l=1}^L\ccint{l\beta,(l+1)\beta}
\]
is a covering of $\ccint{\alpha_0+\frac{1}{p}-3\varepsilon,h+\frac{1}{p}+2\varepsilon}$ formed of intervals of length at most $\frac{1}{m}$. What precedes then shows that for every 
such $\lambda'$, there exists $l(\lambda')\in\{1,\ldots,L\}$ such that \eqref{coeff_order} is satisfied with 
\[
\alpha(\lambda')\in
\ccint{l(\lambda')\beta,l(\lambda')\beta+\frac{1}{m}},
\]
hence 
\[
2^{-\left(l(\lambda')\beta+\frac{1}{m}\right)j'(\lambda)}2^{\frac{j'(\lambda)}{p}}\leq\modu{c_{\lambda'}}\leq 2^{-l(\lambda')\beta j'(\lambda)}2^{\frac{j'(\lambda)}{p}}
\]
as expected.
\end{proof}

We now introduce some notations to count the number of coefficients of a given order, according to the possibilities described in the previous lemma.

\begin{defi}\label{def:count}
For every $j\geq J$, every $\lambda\in\Lambda_j^{(p)}(h,\varepsilon)$ and every $l\in\{1,\ldots,L\}$, we define $r_\lambda(l)\in\{-\infty\}\cup\ccint{\frac{j}{j'(\lambda)},1}$ such that
\[
\#\left\{\lambda'\subseteq\lambda:2^{-\left(l\beta+\frac{1}{m}\right)j'(\lambda)}2^{\frac{j'(\lambda)}{p}}\leq \modu{c_{\lambda'}}\leq 2^{-l\beta j'(\lambda)}2^{\frac{j'(\lambda)}{p}}\right\}=2^{r_\lambda(l)j'(\lambda)-j}
\]
and $r_\lambda(0)\in\{-\infty\}\cup\coint{\frac{j}{j'(\lambda)},1}$ such that
\[
\#\left\{\lambda'\subseteq\lambda: \modu{c_{\lambda'}}< 2^{-(h+2\varepsilon)j}\right\}=2^{r_\lambda(0)j'(\lambda)-j}.
\]
\end{defi}

We further define $l_0(\lambda)$ as the value in $\{1,\ldots,L\}$ such that $r_\lambda(l_0(\lambda)) \geq 0$ and
\[
\sup_{l\in\{1,\ldots,L\}}\left(r_\lambda(l)A(\lambda)-l\beta pA(\lambda)\right)
=r_\lambda(l_0(\lambda))A(\lambda)-l_0(\lambda)\beta pA(\lambda).
\]
Accordingly, the order $\alpha(\lambda)$ that dominates the sum is given by $\alpha(\lambda)=l_0(\lambda)\beta-\frac{1}{p}$. More precisely, we have the following lemma, for which we assume that $J$ is large enough so that 
$(L+1)<2^{\frac{\varepsilon}{2}pJ}$.

\begin{lemma}\label{lm:sum_order}
For every $j\geq J$ and every $\lambda\in\Lambda_j^{(p)}(h,\varepsilon)$, we have
\[
2^{\left(r_\lambda(l_0(\lambda))A(\lambda)-\left(l_0(\lambda)\beta+\frac{1}{m}\right)p\left(A(\lambda)+\frac{1}{m}\right)\right)j} \leq \sum_{\lambda'\subseteq\lambda}\modu{c_{\lambda'}}^p2^{-(j'(\lambda)-j)}\leq 2^{\left(r_\lambda(l_0(\lambda))A(\lambda)-l_0(\lambda)\beta pA(\lambda)+\frac{1}{m}+\frac{\varepsilon}{2}p\right)j}.
\]\normalsize
\end{lemma}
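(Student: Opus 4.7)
The plan is to decompose $\sum_{\lambda' \subseteq \lambda} \modu{c_{\lambda'}}^p\, 2^{-(j'(\lambda)-j)}$ along the partition of $\{\lambda' \in \Lambda_{j'(\lambda)} : \lambda' \subseteq \lambda\}$ supplied by Lemma~\ref{lm:list_orders}: one ``small'' class gathering the coefficients with $\modu{c_{\lambda'}}<2^{-(h+2\varepsilon)j}$, and $L$ classes indexed by $l\in\{1,\ldots,L\}$ whose cardinalities are counted by $r_\lambda(l)$ in Definition~\ref{def:count}. Inserting the pointwise bounds on $\modu{c_{\lambda'}}$ from Lemma~\ref{lm:list_orders}, a one-line computation shows that the contribution of class $l\in\{1,\ldots,L\}$ lies between $2^{(r_\lambda(l)-(l\beta+1/m)p)j'(\lambda)}$ and $2^{(r_\lambda(l)-l\beta p)j'(\lambda)}$, while the small class contributes at most $2^{-(h+2\varepsilon)pj}$.

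For the lower bound I would retain only the contribution of class $l_0(\lambda)$ and rewrite its exponent $(r_\lambda(l_0)-(l_0\beta+\tfrac{1}{m})p)j'(\lambda)$ in terms of $A(\lambda)j$ via Corollary~\ref{cor:scale}. A short case split on the sign of $r_\lambda(l_0)-(l_0\beta+\tfrac{1}{m})p$ (using $j'(\lambda)\geq A(\lambda)j$ when non-negative and $j'(\lambda)\leq(A(\lambda)+\tfrac{1}{m})j$ together with $r_\lambda(l_0)\geq 0$ otherwise) collapses both scenarios onto the target exponent $r_\lambda(l_0(\lambda))A(\lambda)-(l_0(\lambda)\beta+\tfrac{1}{m})p(A(\lambda)+\tfrac{1}{m})$.

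For the upper bound, the symmetric case analysis on the sign of $r_\lambda(l)-l\beta p$ converts each class bound $2^{(r_\lambda(l)-l\beta p)j'(\lambda)}$ into $2^{r_\lambda(l)A(\lambda)j - l\beta p A(\lambda)j + j/m}$, and the argmax definition of $l_0(\lambda)$ then replaces $l$ by $l_0(\lambda)$ uniformly on $\{1,\ldots,L\}$. The small-class contribution $2^{-(h+2\varepsilon)pj}$ is handled by noting that the lower bound $\sum\geq 2^{-(h+3\varepsilon/2)pj}$ from Equation~\eqref{sum_order} forces at least one class in $\{1,\ldots,L\}$, hence by maximality the $l_0$ class, to contribute at least $(L+1)^{-1}2^{-(h+3\varepsilon/2)pj}>2^{-(h+2\varepsilon)pj}$ as soon as $L+1<2^{\varepsilon p J/2}$; summing the at most $L+1$ classes then loses only a further factor $L+1$, both being absorbed into the $2^{\varepsilon p j/2}$ slack of the target exponent.

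The main delicate point is the twin case analysis relating $j'(\lambda)$ to $A(\lambda)j$: depending on the sign of the exponent multiplying $j'(\lambda)$, one must pick the correct endpoint of $\ccint{A(\lambda)j,(A(\lambda)+\tfrac{1}{m})j}$ so that the resulting estimate tightens rather than loosens. Once this sign bookkeeping is done, the bound on $L+1$ coming from the choice made before Lemma~\ref{lm:sum_order} is precisely tailored so that the small-class term disappears into the $\varepsilon p/2$ slack, and the remaining steps are purely mechanical manipulations of the definitions of $A(\lambda)$, $r_\lambda(l)$, and $l_0(\lambda)$.
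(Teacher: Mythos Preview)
Your proposal is correct and follows essentially the same route as the paper: decompose the sum along the classes of Lemma~\ref{lm:list_orders}, bound each class using Definition~\ref{def:count}, convert $j'(\lambda)$ to $A(\lambda)j$ via Corollary~\ref{cor:scale}, and eliminate the small class by contradiction with the lower bound in~\eqref{sum_order}. The only cosmetic difference is that the paper avoids your sign case-splits by using directly that $r_\lambda(l_0(\lambda))\geq 0$ (which is part of the definition of $l_0(\lambda)$) for the lower bound and $r_\lambda(l)\leq 1$ for the upper bound, so that the two pieces $r_\lambda(l)j'(\lambda)$ and $-l\beta p\,j'(\lambda)$ can each be replaced by their worst endpoint of $\bigl[A(\lambda)j,(A(\lambda)+\tfrac1m)j\bigr]$ without splitting into cases; your argument is slightly longer but equally valid.
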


\begin{proof}
The lower bound simply follows from the fact that there exist $2^{r_\lambda(l_0(\lambda))j'(\lambda)-j}$ coefficients $c_{\lambda'}$ that satisfy
\[
\modu{c_{\lambda'}}^p2^{-(j'(\lambda)-j)}\geq 2^{-\left(l_0(\lambda)\beta+\frac{1}{m}\right)pj'(\lambda)+j},
\]
with $A(\lambda)j\leq j'(\lambda)\leq \left(A(\lambda)+\frac{1}{m}\right)j$.

To obtain the upper bound, we partition the set of dyadic intervals $\lambda'$ included in $\lambda$ according to the order of $\modu{c_{\lambda'}}$, which allows to write
\begin{align*}
\sum_{\lambda'\subseteq\lambda}\modu{c_{\lambda'}}^p2^{-(j'(\lambda)-j)}
& \leq (L+1)2^{\max\left(\sup_{l\in\{1,\ldots,L\}}\left(r_\lambda(l)A(\lambda)-l\beta pA(\lambda)+\frac{1}{m}\right),\,(r_\lambda(0)-1)A(\lambda)-(h+2\varepsilon)p\right) j}.
\end{align*}
Then,
we notice that the term $(r_\lambda(0)-1)A(\lambda)-(h+2\varepsilon)p$ cannot achieve the maximum
otherwise
\[
\sum_{\lambda'\subseteq\lambda}\modu{c_{\lambda'}}^p2^{-(j'(\lambda)-j)}\leq(L+1)2^{(r_\lambda(0)-1)A(\lambda)j}2^{-(h+2\varepsilon)pj}< 2^{-\left(h+\frac{3\varepsilon}{2}\right)pj}
\]
would contradict Equation~\eqref{sum_order}. We use the definition of $l_0(\lambda)$ to conclude the proof.
\end{proof}

Moreover, we can provide a lower and an upper bound for $r_\lambda(l_0(\lambda))$, which follow from Lemma~\ref{lm:sum_order} and Equation~\eqref{sum_order}.

\begin{cor}\label{cor:r_lambda}
We have
\[
 \frac{l_0(\lambda)\beta pA(\lambda)-\frac{1}{m}-(h+2\varepsilon)p}{A(\lambda)} \leq r_\lambda(l_0(\lambda)) \leq \frac{\left(l_0(\lambda)\beta+\frac{1}{m}\right)\left(A(\lambda)+\frac{1}{m}\right)p-(h-\varepsilon)p}{A(\lambda)}.
\]
\end{cor}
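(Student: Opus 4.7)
The plan is to obtain both bounds on $r_\lambda(l_0(\lambda))$ by chaining the two-sided estimate of Lemma~\ref{lm:sum_order} against the two-sided estimate of Equation~\eqref{sum_order}. Both compare the same quantity $\sum_{\lambda'\subseteq\lambda}|c_{\lambda'}|^{p}2^{-(j'(\lambda)-j)}$, so any combination of a lower bound with an upper bound yields an inequality between exponents from which $r_\lambda(l_0(\lambda))$ can be extracted by isolating it and dividing by $A(\lambda)>0$.

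More precisely, I would first derive the upper bound on $r_\lambda(l_0(\lambda))$ by matching the \emph{lower} bound coming from Lemma~\ref{lm:sum_order}, namely
\[
2^{\bigl(r_\lambda(l_0(\lambda))A(\lambda)-(l_0(\lambda)\beta+\tfrac{1}{m})p(A(\lambda)+\tfrac{1}{m})\bigr)j},
\]
with the \emph{upper} bound $2^{-(h-\varepsilon)pj}$ from Equation~\eqref{sum_order}, taking logarithms and rearranging. Symmetrically, for the lower bound on $r_\lambda(l_0(\lambda))$ I would match the \emph{upper} bound from Lemma~\ref{lm:sum_order}, namely
\[
2^{\bigl(r_\lambda(l_0(\lambda))A(\lambda)-l_0(\lambda)\beta pA(\lambda)+\tfrac{1}{m}+\tfrac{\varepsilon}{2}p\bigr)j},
\]
with the \emph{lower} bound $2^{-(h+\frac{3\varepsilon}{2})pj}$ from Equation~\eqref{sum_order}. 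In this second step, the constants $\frac{\varepsilon}{2}p$ and $\frac{3\varepsilon}{2}p$ combine cleanly into the $(h+2\varepsilon)p$ appearing on the right-hand side of the claimed lower bound, which is the only arithmetic point to verify.

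There is no genuine obstacle here: the corollary is essentially bookkeeping once Lemma~\ref{lm:sum_order} is available. The only thing to keep track of is the sign of $A(\lambda)$ when dividing (it is positive since $A(\lambda)\in\mathcal{A}\subset[1,N/m+1]$), and the correct pairing of the two inequalities so that the extraneous $\frac{1}{m}$ and $\varepsilon$ terms end up on the correct side of each bound.
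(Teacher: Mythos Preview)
Your proposal is correct and matches the paper's own justification, which simply states that the bounds follow from Lemma~\ref{lm:sum_order} and Equation~\eqref{sum_order}. The pairing you describe and the arithmetic check that $\tfrac{3\varepsilon}{2}p+\tfrac{\varepsilon}{2}p=(2\varepsilon)p$ are exactly what is needed.
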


Up to now, we have established that to every $j\geq J$ and every $\lambda\in\Lambda_j^{(p)}(h,\varepsilon)$, we can associate a valur $A(\lambda)\in\mathcal{A}$ and an integer $l_0(\lambda)\in\{1,\ldots,L\}$ which indicate the scale and the order of the dominating behaviour in 
$e_\lambda^{(p)}$, in the sense that Corollary~\ref{cor:scale} and Lemma~\ref{lm:sum_order} are satisfied.

\medskip

In order to bound $\rho_{\vec{c}}^{(p),\ast}(h)$ by $\left(h+\frac{1}{p}\right)\frac{\rho_{\vec{c}}(\alpha)}{\alpha+\frac{1}{p}}$, where $\rho_{\vec{c}}(\alpha)$ denotes the wavelet density of an exponent $\alpha$ to be determined, we need to control the minimal number of coefficients of a given order $\alpha$ at each scale of a suitably chosen sequence $(j'_n)_{n\in\N}$.

The first step is to determine a sequence of scales $(j_n)_{n\in\N}$, an order $l\in\{1,\ldots,L\}$ and a coefficient $A\in\mathcal{A}$ such that there exist many dyadic intervals $\lambda\in\Lambda_{j_n}(h,\varepsilon)$, whose associated $p$-leaders all arise
from coefficients of order $l$ at a scale close to $Aj_n$. To that end, let us fix $\delta\in\R$ such that $\delta>0$ and $\rho_{\vec{c}}^{(p),\ast}(h)-3\delta>0$ if $\rho_{\vec{c}}^{(p),\ast}(h)>0$, or $\delta=0$ if $\rho_{\vec{c}}^{(p),\ast}(h)=0$. We also assume that $J$ is large enough to satisfy $NL<2^{\delta J}$ if $\delta>0$.

\begin{lemma}\label{lm:A,l}
There exist a sequence $(j_n)_{n\in\N}$, $A\in\mathcal{A}$ and $l\in\{1,\ldots,L\}$ such that for every $n\in\N$, there exist at least $2^{(\rho_{\vec{c}}^{(p),\ast}(h)-2\delta)j_n}$ dyadic intervals $\lambda\in\Lambda_{j_n}^{(p)}(h,\varepsilon)$
with $A(\lambda)=A$ and $l_0(\lambda)=l$. Moreover, for every such interval $\lambda$,
one has \begin{equation}\label{r_lambda}
\max\left(\frac{l\beta pA-\frac{1}{m}-(h+2\varepsilon)p}{A},\frac{1}{A+\frac{1}{m}}\right) \leq r_{\lambda}(l) \leq \frac{\left(l\beta+\frac{1}{m}\right)\left(A+\frac{1}{m}\right)p-(h-\varepsilon)p}{A}.
\end{equation}
\end{lemma}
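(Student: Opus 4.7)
The plan is to combine a pigeonhole argument over the finite index set $\mathcal{A}\times\{1,\ldots,L\}$ with the bounds already established in Corollary~\ref{cor:r_lambda} and with the range of values prescribed by Definition~\ref{def:count}.

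First I would exploit the definition of $\rho_{\vec{c}}^{(p),\ast}(h)$. Since we are in the case where $\rho_{\vec{c}}^{(p),\ast}(h) > -\infty$ and $\varepsilon$ is fixed and small, the definition guarantees
\[
\limsup_{j\to+\infty}\frac{\log_2\#\Lambda_j^{(p)}(h,\varepsilon)}{j}\;\geq\;\rho_{\vec{c}}^{(p),\ast}(h),
\]
so there exists a strictly increasing sequence of scales $(\tilde{\jmath}_n)_{n\in\N}$, with $\tilde{\jmath}_n\geq J$, along which $\#\Lambda_{\tilde{\jmath}_n}^{(p)}(h,\varepsilon)\geq 2^{(\rho_{\vec{c}}^{(p),\ast}(h)-\delta)\tilde{\jmath}_n}$.

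Next, to every $\lambda\in\Lambda_{\tilde{\jmath}_n}^{(p)}(h,\varepsilon)$, Corollary~\ref{cor:scale} associates a value $A(\lambda)\in\mathcal{A}$ and Lemma~\ref{lm:list_orders} together with the definition of $l_0(\lambda)$ associates an integer $l_0(\lambda)\in\{1,\ldots,L\}$. The pair $(A(\lambda),l_0(\lambda))$ lives in a set of cardinality at most $NL$. By pigeonhole, for each $n$ there exists a pair $(A_n,l_n)\in\mathcal{A}\times\{1,\ldots,L\}$ such that at least $\frac{1}{NL}\#\Lambda_{\tilde{\jmath}_n}^{(p)}(h,\varepsilon)$ intervals in $\Lambda_{\tilde{\jmath}_n}^{(p)}(h,\varepsilon)$ share this pair. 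Using the assumption $NL<2^{\delta J}\leq 2^{\delta\tilde{\jmath}_n}$ (valid also when $\delta=0$, in which case $\rho_{\vec{c}}^{(p),\ast}(h)=0$ and $NL$ can be absorbed by replacing $2\delta$ with $0$), this count is at least $2^{(\rho_{\vec{c}}^{(p),\ast}(h)-2\delta)\tilde{\jmath}_n}$. Since $(A_n,l_n)$ takes finitely many values, some pair $(A,l)$ is attained for infinitely many $n$, and I extract the corresponding subsequence $(j_n)_{n\in\N}$, giving the first assertion.

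It then remains to check~\eqref{r_lambda} for the selected intervals. The upper bound and the first lower bound follow directly from substituting $A(\lambda)=A$ and $l_0(\lambda)=l$ into Corollary~\ref{cor:r_lambda}. The second lower bound $\frac{1}{A+\frac{1}{m}}\leq r_\lambda(l)$ comes from the range stated in Definition~\ref{def:count}: since $l$ is attained as a dominant order (so $r_\lambda(l)\neq-\infty$), we have $r_\lambda(l)\geq j/j'(\lambda)$, and Corollary~\ref{cor:scale} gives $j'(\lambda)\leq(A+\frac{1}{m})j$, hence $j/j'(\lambda)\geq 1/(A+\frac{1}{m})$.

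The main obstacle, and the reason the extra factor $NL$ has been absorbed into the $\delta$-loss, is to make sure the pigeonhole selection yields a single pair $(A,l)$ rather than a pair that varies with $n$; this is where the prior choice of $J$ large enough to satisfy $NL<2^{\delta J}$ and the extraction of a further subsequence are used in tandem. Beyond that, the proof is a direct bookkeeping exercise combining the previous lemmas.
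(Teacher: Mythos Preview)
Your proposal is correct and follows essentially the same approach as the paper: extract a sequence of scales where $\#\Lambda_{j_n}^{(p)}(h,\varepsilon)\geq 2^{(\rho_{\vec{c}}^{(p),\ast}(h)-\delta)j_n}$, pigeonhole over the finite index set $\mathcal{A}\times\{1,\ldots,L\}$ absorbing the factor $NL$ into the second $\delta$, pass to a further subsequence to fix $(A,l)$, and read off \eqref{r_lambda} from Corollary~\ref{cor:r_lambda} together with the range $r_\lambda(l)\geq j/j'(\lambda)$ in Definition~\ref{def:count} combined with Corollary~\ref{cor:scale}. Your justification of the lower bound $1/(A+\tfrac{1}{m})$ is in fact slightly more explicit than the paper's, but the argument is the same.
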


\begin{proof}
By definition of 
$\rho_{\vec{c}}^{(p),\ast}(h)$, 
there exists an increasing sequence $(j_n)_{n\in\N}$ such that $j_1\geq J$ and for every $n\in\N$,
\[
2^{(\rho_{\vec{c}}^{(p),\ast}(h)-\delta)j_n} 
\leq \#\Lambda_{j_n}^{(p)}(h,\varepsilon) = \sum_{a=1}^{\frac{N}{m}}\sum_{b=0}^{m-1}\sum_{l=1}^L \#\Lambda^{(p)}_{j_n}\left(h,\varepsilon,a+\frac{b}{m},l\right),
\]
where
\[
\Lambda_{j_n}^{(p)}(h,\varepsilon,A,l)=\left\{\lambda\in\Lambda_{j_n}^{(p)}(h,\varepsilon):A(\lambda)=A\text{ and }l_0(\lambda)=l\right\}.
\]
Therefore, for every $n\in\N$,
\[
2^{(\rho_{\vec{c}}^{(p),\ast}(h)-\delta)j_n}\leq NL\sup_{A\in\mathcal{A}}\,\sup_{l\in\{1,\ldots,L\}}\#\Lambda_{j_n}^{(p)}(h,\varepsilon,A,l),
\]
from which follows the existence of $A_n\in\mathcal{A}$, $l_n\in\{1,\ldots,L\}$ such that
\[
2^{(\rho_{\vec{c}}^{(p),\ast}(h)-2\delta)j_n}\leq\#\Lambda_{j_n}^{(p)}(h,\varepsilon,A_n,l_n).
\]
Using the pigeon hole principle, 
we may assume that there exist $A\in\mathcal{A}$ and $l\in\{1,\ldots,L\}$ such that for every $n\in\N$, 
\[
2^{(\rho_{\vec{c}}^{(p),\ast}(h)-2\delta)j_n}\leq \#\Lambda_{j_n}^{(p)}(h,\varepsilon,A,l),
\]
which is exactly the condition requested in the first statement. Finally, Equation~\eqref{r_lambda} directly follows from Corollary~\ref{cor:scale} and Corollary~\ref{cor:r_lambda}.
\end{proof}

It remains to address the following difficulty : when considering two $p$-leaders at scale $j_n$, as in Lemma~\ref{lm:A,l}, the scales at which information about their dominating coefficients is available vary between $Aj_n$ and $(A+\frac{1}{m})j_n$, depending on the specific $p$-leader under consideration. Consequently, we require the following lemma to derive the sequence $(j'_n)_{n\in\N}$.

\begin{lemma}\label{lm:j'_n}
For every $n\in\N$ large enough, there are at least 
\[
\max\left(2^{(\rho_{\vec{c}}^{(p),\ast}(h)-3\delta)j_n}2^{\left(l\beta pA-\frac{1}{m}-(h+2\varepsilon)p-1\right)j_n},1\right)
\]
intervals $\lambda'$ at a common scale $j_n'\in\ccint{Aj_n,\left(A+\frac{1}{m}\right)j_n}$ such that
\[
2^{-\left(l\beta+\frac{1}{m}-\frac{1}{p}\right)j'_n} \leq \modu{c_{\lambda'}} \leq 2^{-\left(l\beta-\frac{1}{m}-\frac{1}{p}\right)j'_n}.
\]
\end{lemma}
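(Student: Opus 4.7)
The plan is to extract a common scale $j'_n$ via pigeon-hole and then sum the lower bound on the number of large coefficients given by Definition~\ref{def:count} across the many intervals from Lemma~\ref{lm:A,l}. Concretely, for each $n$, Lemma~\ref{lm:A,l} supplies a family of at least $2^{(\rho_{\vec{c}}^{(p),\ast}(h)-2\delta)j_n}$ intervals $\lambda\in\Lambda_{j_n}^{(p)}(h,\varepsilon,A,l)$. For each such $\lambda$, the integer scale $j'(\lambda)$ belongs to the interval $\ccint{Aj_n,(A+\tfrac{1}{m})j_n}$ and therefore takes at most $\lfloor j_n/m\rfloor+1$ distinct values. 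By the pigeon-hole principle, one can extract a sub-collection of intervals that share a common scale $j'_n\in\ccint{Aj_n,(A+\tfrac{1}{m})j_n}$ of cardinality at least $2^{(\rho_{\vec{c}}^{(p),\ast}(h)-2\delta)j_n}/(\lfloor j_n/m\rfloor+1)$. For $n$ large enough, $\lfloor j_n/m\rfloor+1\leq 2^{\delta j_n}$, which yields at least $2^{(\rho_{\vec{c}}^{(p),\ast}(h)-3\delta)j_n}$ intervals $\lambda$ sharing $j'(\lambda)=j'_n$.

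For each such $\lambda$, since $l_0(\lambda)=l$, Definition~\ref{def:count} provides at least $2^{r_\lambda(l)j'_n-j_n}$ sub-intervals $\lambda'\in\Lambda_{j'_n}$ with $\lambda'\subseteq\lambda$ whose coefficients satisfy
\[
2^{-(l\beta+\frac{1}{m})j'_n}\,2^{j'_n/p}\;\leq\;\modu{c_{\lambda'}}\;\leq\;2^{-l\beta j'_n}\,2^{j'_n/p},
\]
which, after rearranging the $2^{j'_n/p}$ factor, implies the amplitude bounds required by the lemma (the stated upper bound being in fact looser by the factor $2^{j'_n/m}$). Because the $\lambda$'s are pairwise disjoint dyadic intervals at scale $j_n$, the corresponding families of $\lambda'$'s are automatically disjoint, so summing the counts across the $\lambda$'s yields at least $2^{(\rho_{\vec{c}}^{(p),\ast}(h)-3\delta)j_n}\cdot 2^{r_\lambda(l)j'_n-j_n}$ distinct coefficients of the desired magnitude at scale $j'_n$.

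To finish, I will plug in the lower bound for $r_\lambda(l)$ from Lemma~\ref{lm:A,l}. Since $r_\lambda(l)\geq 0$ and $j'_n\geq Aj_n$, we have
\[
r_\lambda(l)j'_n-j_n\;\geq\;r_\lambda(l)Aj_n - j_n\;\geq\;\bigl(l\beta pA-\tfrac{1}{m}-(h+2\varepsilon)p-1\bigr)j_n,
\]
which combined with the previous paragraph produces the announced $2^{(\rho_{\vec{c}}^{(p),\ast}(h)-3\delta)j_n}\,2^{(l\beta pA-\frac{1}{m}-(h+2\varepsilon)p-1)j_n}$ lower bound. The $\max$ with $1$ in the statement handles the regime where this exponent becomes negative: the second lower bound on $r_\lambda(l)$ in Lemma~\ref{lm:A,l}, namely $r_\lambda(l)\geq 1/(A+\tfrac{1}{m})$, ensures $r_\lambda(l)j'_n\geq j_n$, so each individual $\lambda$ already contributes at least one eligible coefficient. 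The only real bookkeeping hurdle is to verify that every step of the discretization -- the pigeon-hole factor $\lfloor j_n/m\rfloor+1$, the $NL$ factor from Lemma~\ref{lm:A,l}, and the granularity coming from $m$ and $\beta$ -- can be absorbed into the $\delta$-slack chosen in advance, which is guaranteed by the earlier requirements that $NL<2^{\delta J}$ and that $J$ is taken sufficiently large.
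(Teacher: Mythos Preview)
Your proof is correct and follows essentially the same approach as the paper's: pigeon-hole over the at most $\lfloor j_n/m\rfloor+1$ possible values of $j'(\lambda)$ to extract a common scale $j'_n$, then sum the counts $2^{r_\lambda(l)j'_n-j_n}$ over the disjoint $\lambda$'s and apply the uniform lower bound on $r_\lambda(l)$ from Equation~\eqref{r_lambda}. Two minor remarks: when you write the product $2^{(\rho_{\vec{c}}^{(p),\ast}(h)-3\delta)j_n}\cdot 2^{r_\lambda(l)j'_n-j_n}$, bear in mind that $r_\lambda(l)$ varies with $\lambda$, so what you are really using is the uniform lower bound before multiplying; and the pigeon-hole inequality $\lfloor j_n/m\rfloor+1\leq 2^{\delta j_n}$ is only imposed when $\delta>0$, the case $\delta=0$ (i.e.\ $\rho_{\vec{c}}^{(p),\ast}(h)=0$) being trivial since one only needs a single $\lambda$.
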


\begin{proof}
With the notations of Lemma~\ref{lm:A,l}, for every $n\in\N$ and every 
$\lambda\in\Lambda_{j_n}^{(p)}(h,\varepsilon,A,l)$, there exist $j'_n(\lambda)\in\ccint{Aj_n,\left(A+\frac{1}{m}\right)j_n}$ and $2^{r_{\lambda}(l)j'_n(\lambda)-j_n}$ intervals $\lambda'\in\Lambda_{j_n'(\lambda)}$ satisfying $\lambda'\subseteq\lambda$ and
\[
2^{-\left(l\beta+\frac{1}{m}\right)j'_n(\lambda)}2^{\frac{j'_n(\lambda)}{p}}\leq\modu{c_{\lambda'}}\leq 2^{-l\beta j'_n(\lambda)}2^{\frac{j'_n(\lambda)}{p}}.
\]
But, since $\#\Lambda_{j_n}^{(p)}(h,\varepsilon,A,l)\geq 2^{(\rho_{\vec{c}}^{(p),\ast}(h)-2\delta)j_n}$, for every $n\in\N$ large enough so that $\frac{j_n}{m}+1\leq 2^{\delta j_n}$ if $\delta>0$, one integer value of $\ccint{Aj_n,\left(A+\frac{1}{m}\right)j_n}$ must be picked at least $2^{(\rho_{\vec{c}}^{(p),\ast}(h)-3\delta)j_n}$ times. The conclusion then follows from Equation~\eqref{r_lambda}.
\end{proof}

We may now conclude. It remains to assume that the parameters are chosen such that
\begin{itemize}
\item $\rho_{\vec{c}}^{(p),\ast}(h)>4p\varepsilon$ if $\rho_{\vec{c}}^{(p),\ast}(h)>0$,
\item $3\delta<\frac{1}{m}$,
\item $m\geq\frac{1}{p\varepsilon}$, $m\geq \frac{h+\frac{1}{p}+2\varepsilon}{6\varepsilon}$ and
\[
m\geq \frac{\left(2+p\left(h+\frac{1}{p}-2\varepsilon\right)+p\left\lceil\frac{h+\frac{1}{p}+2\varepsilon}{\alpha_0+\frac{1}{p}}\right\rceil-\frac{\rho_{\vec{c}}^{(p),\ast}(h)-4p\varepsilon}{h+\frac{1}{p}+2\varepsilon}\right)\left(h+\frac{1}{p}+2\varepsilon\right)}{4\varepsilon(\rho_{\vec{c}}^{(p),\ast}(h)-4p\varepsilon)}.
\]
\end{itemize}

Let us prove a technical lemma.

\begin{lemma}\label{lm:tech}
If $\rho_{\vec{c}}^{(p),\ast}(h)>0$, then 
\[
\rho_{\vec{c}}^{(p),\ast}(h)-3\delta+l\beta pA-\frac{1}{m}-(h+2\varepsilon)p-1\geq \frac{\left(A+\frac{1}{m}\right)(\rho_{\vec{c}}^{(p),\ast}(h)-4p\varepsilon)l\beta}{h+\frac{1}{p}+2\varepsilon}
\]
for any pair $(A,l)\in\mathcal{A}\times\{1,\ldots,L\}$ that can be obtained from Lemma~\ref{lm:A,l}.
\end{lemma}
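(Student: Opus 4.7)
The plan is to recast the inequality as a lower bound on $l\beta$ and to verify this bound using the constraints implied by Lemma~\ref{lm:A,l} together with the explicit conditions imposed on $m$, $\varepsilon$, and $\delta$. Set $Q = h + \frac{1}{p} + 2\varepsilon$ and $S = \rho_{\vec{c}}^{(p),\ast}(h)$, and use $1 + (h + 2\varepsilon)p = pQ$ to rewrite the desired inequality as the equivalent form
\[
l\beta \cdot \left[A(pQ - S + 4p\varepsilon) - \frac{S - 4p\varepsilon}{m}\right] \geq Q\left(pQ - S + 3\delta + \frac{1}{m}\right).
\]
By Lemma~\ref{sup_B} one has $S \leq \nu_{\vec{c}}^{(p),\ast}(h) \leq hp+1 = pQ - 2p\varepsilon$, so $pQ - S \geq 2p\varepsilon$; combined with $m \geq Q/(6\varepsilon)$, this yields strict positivity of the coefficient of $l\beta$ on the left, so the inequality becomes a concrete lower bound on $l\beta$.

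Next, I extract a lower bound on $l\beta$ from the hypothesis that $(A, l)$ is obtained from Lemma~\ref{lm:A,l}. The upper bound in \eqref{r_lambda}, combined with $r_\lambda(l) \geq \frac{1}{A + 1/m}$ from Definition~\ref{def:count}, gives
\[
(l\beta + 1/m)(A + 1/m) \geq h - \varepsilon + \frac{A}{p(A + 1/m)},
\]
which rearranges (after dividing by $A + 1/m$ and subtracting $1/m$) to $l\beta \geq \frac{h - \varepsilon}{A + 1/m} + \frac{A}{p(A + 1/m)^2} - \frac{1}{m}$.

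The main obstacle is then the algebraic verification that this lower bound on $l\beta$ exceeds the one obtained in the first step. Using $3\delta < 1/m$, $A + 1/m \leq \lceil Q/(\alpha_0 + 1/p) \rceil$, and the intricate condition on $m$ (the last one in the parameter list, with $S - 4p\varepsilon$ in its denominator, engineered precisely for this purpose), the verification reduces to a polynomial comparison in $\varepsilon$ and $1/m$. Asymptotically as $1/m \to 0$ the comparison becomes $(Q - 3\varepsilon)(pQ - S + 4p\varepsilon) \geq Q(pQ - S)$, which expands to $\varepsilon(pQ + 3S - 12p\varepsilon) \geq 0$ and holds for small $\varepsilon$ since $S > 4p\varepsilon$; the explicit lower bound on $m$ is calibrated exactly to absorb all the finite-$m$ correction terms.
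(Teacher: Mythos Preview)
Your proposal is correct and follows essentially the same route as the paper: both arguments observe that the desired inequality is affine in $l\beta$ with positive leading coefficient (this is exactly the paper's claim that the function of $l$ is non-decreasing, justified via Lemma~\ref{sup_B}), then substitute the minimal value of $l\beta$ extracted from the constraint \eqref{r_lambda}, and finally reduce to an algebraic verification governed by the explicit parameter conditions on $m$, $\delta$, and $\varepsilon$. The paper is equally terse at the final step (``the minimum is eventually shown to be non-negative''), so your asymptotic heuristic plus the appeal to the calibrated bound on $m$ matches the level of detail in the original.
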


\begin{proof}
From \eqref{r_lambda}, we know that the pair $(A,l)$ satisfies
\begin{equation}\label{cond_l}
\frac{1}{A+\frac{1}{m}} \leq \frac{\left(l\beta+\frac{1}{m}\right)\left(A+\frac{1}{m}\right)p-(h-\varepsilon)p}{A}.
\end{equation}
We must prove that the function
\[
l\mapsto \rho_{\vec{c}}^{(p),\ast}(h)-3\delta+l\beta pA-\frac{1}{m}-(h+2\varepsilon)p-1-\frac{\left(A+\frac{1}{m}\right)(\rho_{\vec{c}}^{(p),\ast}(h)-4p\varepsilon)l\beta}{h+\frac{1}{p}+2\varepsilon}
\]
is non-negative. Direct computations show that this function must be non-decreasing, otherwise Lemma~\ref{sup_B} would be contradicted. As a consequence, it reaches its minimum when $l$ is minimal, i.e. when
\[
l=\frac{A}{\beta\left(A+\frac{1}{m}\right)^2p}-\frac{1}{m\beta}+\frac{h-\varepsilon}{\beta\left(A+\frac{1}{m}\right)}
\]
in view of Conditions~\eqref{cond_l}.
Using the inequality 
\[
\frac{A}{\beta\left(A+\frac{1}{m}\right)^2p}-\frac{1}{m\beta}+\frac{h-\varepsilon}{\beta\left(A+\frac{1}{m}\right)}\geq\frac{h+\frac{1}{p}-2\varepsilon}{\beta\left(A+\frac{1}{m}\right)}-\frac{1}{m\beta},
\]
the minimum is eventually shown to be non-negative.
\end{proof}

We now have all the necessary tools to complete the proof.

\begin{proof}[Proof of Theorem~\ref{petitsoleil}]
    From Lemmas \ref{lm:j'_n} and \ref{lm:tech}, it follows that we have
\[
\frac{\rho_{\vec{c}}\left(l\beta-\frac{1}{p}\right)}{l\beta}\geq\left\{\begin{array}{ll}
  0   &  \text{ if } \rho_{\vec{c}}^{(p),\ast}(h)=0, \\
  \frac{\rho_{\vec{c}}^{(p),\ast}(h)-4p\varepsilon}{h+\frac{1}{p}+2\varepsilon}   & \text{ if } \rho_{\vec{c}}^{(p),\ast}(h)>0.
\end{array}\right.
\]
Since $l\beta-\frac{1}{p}\in\ocint{\frac{-1}{p},h+2\varepsilon}$, we have in both cases
\begin{align*}
\left(h+2\varepsilon+\frac{1}{p}\right)\sup_{\alpha\in\ocint{\frac{-1}{p},\,h+2\varepsilon}}\frac{\rho_{\vec{c}}(\alpha)}{\alpha+\frac{1}{p}}
& \geq \left(h+2\varepsilon+\frac{1}{p}\right)\frac{\rho_{\vec{c}}\left(l\beta-\frac{1}{p}\right)}{l\beta} \\
& \geq \max(\rho_{\vec{c}}^{(p),\ast}(h)-4p\varepsilon,0).
\end{align*}
The conclusion then follows by letting $\varepsilon$ tend towards 0.
\end{proof}

\section{Study of the $p$-spectrum of Random Wavelet Series}\label{secRWStot}

The aim of this section is to prove that the upper bound obtained in Theorem~\ref{mainSup} is optimal, that is, to establish Theorem~\ref{main_RWS}.  
To this end, we study \textit{Random Wavelet Series}, which are defined directly through the distribution of their coefficients. Such series were introduced and studied by Aubry and Jaffard (see \cite{RWS}), who showed in particular that the statistical distribution of the coefficients accurately reflects the underlying wavelet profile.  

We begin by recalling the relevant definitions and known results concerning these Random Wavelet Series. This preliminary step provides the foundation for establishing the optimality of the upper bound.

\subsection{Random Wavelet Series}\label{secRWS}

A \textit{Random Wavelet Series} (RWS) is a process whose wavelet coefficients are drawn at each scale randomly and independently according to a fixed distribution on a fixed probability space $(\Omega,\mathscr{F},\prob)$. If
\[
f=\sum_{j\in\N_0}\sum_{k=0}^{2^j-1}c_{j,k}\psi_{j,k}
\]
is a RWS, then $X_{j,k}$ denotes the random variable $\frac{-\log_2\modu{c_{j,k}}}{j}$ and $\bm{\rho_j}$ is the common distribution of all $2^j$ random variables $X_{j,k}$ ($k\in\{0,\ldots,2^j-1\}$). In that case, 
\[
\prob\left(\modu{c_{j,k}}\geq 2^{-\alpha j}\right)=\bm{\rho_j}(\ocint{-\infty,\alpha}).
\]
Moreover, for every $\alpha\in\R$, we set 
\[
\bm{\rho}(\alpha)=\lim_{\varepsilon\rightarrow0^+}\limsup_{j\rightarrow+\infty}\frac{\log_2\left(2^j\bm{\rho_j}(\ccint{\alpha-\varepsilon,\alpha+\varepsilon})\right)}{j}
\]
and
\[
\bm{\nu}(\alpha)=\lim_{\varepsilon\rightarrow0^+}\limsup_{j\rightarrow+\infty}\frac{\log_2\left(2^j\bm{\rho_j}(\ocint{-\infty,\alpha+\varepsilon})\right)}{j}.
\]
Finally, to any fixed RWS, we associate the set
\[
W=\left\{\alpha\in\R:\forall\varepsilon>0,\,\sum_{j\in\N_0}2^j\bm{\rho_j}(\ccint{\alpha-\varepsilon,\alpha+\varepsilon})=+\infty\right\}
\]
and the value
\[
h_{\min}=\inf W.
\]
In what follows, we will focus on Random Wavelet Series satisfying $\{\alpha\in\R:\bm{\rho}(\alpha)>0\}\neq\emptyset$, in which case $W\neq\emptyset$. Since $W$ is closed, we know that $h_{\min}$ belongs to $W$.

\medskip

We now turn to the main purpose of this section, which is to recall how the wavelet density $\rho_{\vec{c}}$ and the wavelet profile $\nu_{\vec{c}}$ of a Random Wavelet Series $f$ are linked with their theoretical counterparts $\bm{\rho}$ and $\bm{\nu}$, as stated in \cite{RWS}. 
The case of the density is handled in Proposition~\ref{propW} (for which we provide a modernized proof in Appendix~\ref{appA}), and the property concerning the profile follows in Corollary~\ref{eg_nu}.

\begin{prop}\label{propW}
\cite{RWS} The following properties are satisfied:
\begin{enumerate}
\item $\bm{\rho}(\alpha)>0\Rightarrow\alpha\in W$ and $\bm{\rho}(\alpha)<0\Rightarrow\alpha\notin W$,
\item almost surely, for every $\alpha\in\R$,
\[
\rho_{\vec{c}}(\alpha)=\left\{\begin{array}{ll}
\bm{\rho}(\alpha) & \text{ if }\alpha\in W, \\
-\infty & \text{ otherwise}.
\end{array}\right.
\]
\end{enumerate}
\end{prop}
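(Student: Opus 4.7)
The plan is to treat the two assertions separately: Part 1 is a purely deterministic comparison of definitions, while Part 2 relies on two applications of the Borel--Cantelli lemma combined with concentration for the binomial counts
\[
N_j(\alpha,\varepsilon) := \#\{k \in \{0,\ldots,2^j-1\} : X_{j,k} \in [\alpha-\varepsilon,\alpha+\varepsilon]\},
\]
which, for fixed $(\alpha,\varepsilon)$, follow a $\mathrm{Bin}(2^j,p_j(\varepsilon))$ law with $p_j(\varepsilon) := \bm{\rho_j}([\alpha-\varepsilon,\alpha+\varepsilon])$.

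For Part 1, if $\bm{\rho}(\alpha) > 0$ then for some $\varepsilon_0 > 0$ the defining $\limsup$ remains strictly positive, so infinitely many $j$ satisfy $2^j p_j(\varepsilon_0) \geq 2^{cj}$ for a fixed $c > 0$; the series $\sum_j 2^j p_j(\varepsilon)$ then diverges for every $\varepsilon \geq \varepsilon_0$, forcing $\alpha \in W$. Conversely, if $\bm{\rho}(\alpha) < 0$, I would pick $\varepsilon_0$ small enough so that $\limsup_j \log_2(2^j p_j(\varepsilon_0))/j < \bm{\rho}(\alpha)/2 < 0$; then $2^j p_j(\varepsilon_0) \leq 2^{\bm{\rho}(\alpha) j/2}$ eventually, the corresponding series converges, and $\alpha \notin W$.

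For Part 2 I would fix a countable dense set of rational triples $(\alpha,\varepsilon,\delta)$. The \emph{upper bound} follows from Markov's inequality:
\[
\prob\bigl(N_j(\alpha,\varepsilon) \geq 2^{(\bm{\rho}(\alpha)+\delta)j}\bigr) \leq 2^j p_j(\varepsilon)\, 2^{-(\bm{\rho}(\alpha)+\delta)j},
\]
which is summable in $j$ once $\varepsilon$ is chosen so that $\limsup_j \log_2(2^j p_j(\varepsilon))/j < \bm{\rho}(\alpha) + \delta/2$; the first Borel--Cantelli lemma then yields $N_j(\alpha,\varepsilon) < 2^{(\bm{\rho}(\alpha)+\delta)j}$ eventually. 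When $\alpha \notin W$ the expected total count $\sum_j 2^j p_j(\varepsilon)$ is finite for some $\varepsilon$, and the same step forces $N_j(\alpha,\varepsilon) = 0$ eventually, giving $\rho_{\vec{c}}(\alpha) = -\infty$. For the matching \emph{lower bound} at a rational $\alpha \in W$ I would exploit the independence of the $X_{j,k}$ across all $(j,k)$: when $\bm{\rho}(\alpha) = 0$ the divergence condition defining $W$ and the second Borel--Cantelli lemma already give $\rho_{\vec{c}}(\alpha) \geq 0$, while when $\bm{\rho}(\alpha) > 0$ a Chernoff lower-tail estimate applied to $N_{j_n}(\alpha,\varepsilon)$ along a subsequence $(j_n)$ with $2^{j_n} p_{j_n}(\varepsilon) \geq 2^{(\bm{\rho}(\alpha)-\delta)j_n}$ produces a doubly exponentially small failure probability, so Borel--Cantelli ensures the count exceeds $\tfrac{1}{2} \cdot 2^{(\bm{\rho}(\alpha)-\delta)j_n}$ for all large $n$, yielding $\rho_{\vec{c}}(\alpha) \geq \bm{\rho}(\alpha) - 2\delta$.

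The main obstacle, and presumably the reason why the modernized proof in Appendix~\ref{appA} is longer than a one-line Borel--Cantelli invocation, is upgrading the countable collection of almost sure events at rational $(\alpha,\varepsilon,\delta)$ into a single full-measure event on which the conclusion holds \emph{for every} $\alpha \in \R$ simultaneously. I would route the argument through the wavelet profile $\nu_{\vec{c}}$ first: its defining counts $\#\{k : X_{j,k} \leq \alpha+\varepsilon\}$ are monotone in $\alpha$ and hence pinned down on all of $\R$ by their values on a countable dense family of rational thresholds, so that the previous Borel--Cantelli/Chernoff scheme identifies $\nu_{\vec{c}} = \bm{\nu}$ pointwise almost surely. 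The envelope relation \eqref{eq:enveloppe} then transfers this identification to $\rho_{\vec{c}}(\alpha) = \bm{\rho}(\alpha)$ on the interior of $W$, while the upper-bound argument alone handles the boundary of $W$ and the points where $\bm{\rho}(\alpha) = -\infty$.
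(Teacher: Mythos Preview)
Your treatment of Part~1 and your pointwise Borel--Cantelli/Chernoff estimates at fixed rational $(\alpha,\varepsilon,\delta)$ are fine in spirit. The genuine gap is the last step, where you invoke the envelope relation \eqref{eq:enveloppe} to pass from $\nu_{\vec{c}}=\bm{\nu}$ to $\rho_{\vec{c}}=\bm{\rho}$. That relation runs in only one direction: the increasing hull forgets the value of $\rho$ at every point that is not a running maximum. Concretely, if $\bm{\rho}(\alpha')=0.8$ for some $\alpha'<\alpha_0$ while $\bm{\rho}(\alpha_0)=0.5$, then $\bm{\nu}(\alpha_0)=0.8$; a realization with $\rho_{\vec{c}}(\alpha_0)=0.2$ would still satisfy $\nu_{\vec{c}}(\alpha_0)=0.8$, so your argument cannot distinguish it from the correct value. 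Hence the identification $\nu_{\vec{c}}=\bm{\nu}$, even combined with your upper bound $\rho_{\vec{c}}\le\bm{\rho}$, yields no lower bound on $\rho_{\vec{c}}(\alpha)$ at points where $\bm{\rho}(\alpha)<\bm{\nu}(\alpha)$. A secondary issue is that your lower-bound step is stated ``at a rational $\alpha\in W$'': since $W$ is merely closed, it may contain no rationals at all, or too few to recover the behaviour on all of $W$.

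The paper's proof in Appendix~\ref{appA} sidesteps both problems by never routing through $\nu_{\vec{c}}$. At each scale $j$ it partitions the range of the $X_{j,k}$ into the $\sim j$ dyadic cells of $\Lambda_{\lfloor\log_2 j\rfloor}$ and controls the binomial count in \emph{every} cell simultaneously via a union bound (the events $A_j^{(1)}$, $A_j^{(2)}$). Since there are only polynomially many cells per scale and the concentration inequality \eqref{stute} gives stretched-exponential tails, the failure probabilities are summable; on the resulting single full-measure event, for an arbitrary $\alpha$ one writes $[\alpha_m^-,\alpha_m^+]$ as a disjoint union of these cells and reads off both the upper and lower bounds on $\rho_{\vec{c}}(\alpha)$ directly. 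This uniform-grid device is precisely what a countable-rationals scheme cannot supply for the two-sided density $\rho_{\vec{c}}$, in contrast to the monotone profile $\nu_{\vec{c}}$ where your monotonicity argument would indeed suffice.
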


To infer Corollary~\ref{eg_nu}, we use on one hand the fact that $h_{\min}$ belongs to $W$ and the monotonicity of the function $\bm{\nu}$, and on the other hand, the fact that $\nu_{\vec{c}}$ and $\bm{\nu}$ are the increasing hulls respectively of $\rho_{\vec{c}}$ and $\bm{\rho}$, that is, Equation~\eqref{eq:enveloppe} and
\begin{equation}\label{eq:enveloppe_th}
\bm{\nu}(\alpha)=\sup_{\alpha'\leq\alpha}\bm{\rho}(\alpha')\;\;\forall\alpha\in\R\text{ such that }\bm{\nu}(\alpha)\geq0.
\end{equation}

\begin{cor}\label{eg_nu}
\cite{RWS} The following properties are satisfied:
\begin{enumerate}
\item for every $\alpha\geq h_{\min}$, $\bm{\nu}(\alpha)\geq 0$,
\item almost surely, for every $\alpha\in\R$,
\[
\nu_{\vec{c}}(\alpha)=\left\{\begin{array}{ll}
\bm{\nu}(\alpha) & \text{ if }\alpha\geq h_{\min}, \\
-\infty & \text{ otherwise.}
\end{array}\right.
\]
\end{enumerate}
\end{cor}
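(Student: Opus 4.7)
The corollary has a deterministic first part that will be used to unlock Equation~\eqref{eq:enveloppe_th} in the almost-sure second part. I would treat them in that order and, for the second, split into the cases $\alpha\geq h_{\min}$ and $\alpha<h_{\min}$.

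\textbf{Part (1).} The plan is to prove $\bm{\nu}(h_{\min})\geq 0$ and then extend by the monotonicity of $\bm{\nu}$, which is immediate since $\alpha\mapsto\bm{\rho_j}(\ocint{-\infty,\alpha+\varepsilon})$ is non-decreasing for each $j,\varepsilon$. Because $h_{\min}\in W$, the series $\sum_j 2^j\bm{\rho_j}(\ccint{h_{\min}-\varepsilon,h_{\min}+\varepsilon})$ diverges for every $\varepsilon>0$, so its general term cannot decay geometrically, which forces $\limsup_j j^{-1}\log_2(2^j \bm{\rho_j}(\ccint{h_{\min}-\varepsilon,h_{\min}+\varepsilon}))\geq 0$. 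The inclusion $\ccint{h_{\min}-\varepsilon,h_{\min}+\varepsilon}\subset\ocint{-\infty,h_{\min}+\varepsilon}$ transports this lower bound to the quantity defining $\bm{\nu}(h_{\min})$, and letting $\varepsilon\to 0^+$ concludes.

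\textbf{Part (2), $\alpha\geq h_{\min}$.} Working on the almost-sure event of Proposition~\ref{propW}, part~(1) permits the use of Equation~\eqref{eq:enveloppe_th}, giving $\bm{\nu}(\alpha)=\sup_{\alpha'\leq\alpha}\bm{\rho}(\alpha')$. By Proposition~\ref{propW} any $\alpha'\notin W$ satisfies $\bm{\rho}(\alpha')\leq 0$, while $h_{\min}\in W\cap\ocint{-\infty,\alpha}$ ensures the sup is already $\geq\bm{\rho}(h_{\min})\geq 0$; hence restricting to $\alpha'\in W$ does not decrease it. On $W$, $\bm{\rho}=\rho_{\vec{c}}$ a.s., and off $W$, $\rho_{\vec{c}}=-\infty$ a.s., so $\bm{\nu}(\alpha)=\sup_{\alpha'\leq\alpha}\rho_{\vec{c}}(\alpha')$, and Equation~\eqref{eq:enveloppe} finally identifies this with $\nu_{\vec{c}}(\alpha)$.

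\textbf{Part (2), $\alpha<h_{\min}$, and the main obstacle.} Any $\alpha'\leq\alpha$ lies strictly below $h_{\min}$, hence outside $W$, so Proposition~\ref{propW} yields $\rho_{\vec{c}}(\alpha')=-\infty$ a.s.\ for all such $\alpha'$; Equation~\eqref{eq:enveloppe} then closes the case as $\nu_{\vec{c}}(\alpha)=-\infty$. The hard part will be to justify \eqref{eq:enveloppe} in the first place, whose hypothesis is that $\{\alpha:\nu_{\vec{c}}(\alpha)=-\infty\}\neq\emptyset$ almost surely. I would establish this by Borel--Cantelli applied to the events $\{\exists k:\modu{c_{j,k}}\geq 2^{-(\alpha_0+\varepsilon)j}\}$ for some $\alpha_0$ with $\alpha_0+\varepsilon<h_{\min}$: as every point of $\ocint{-\infty,\alpha_0+\varepsilon}$ lies outside the closed set $W$, a compactness argument covers any bounded slab $\ccint{-M,\alpha_0+\varepsilon}$ by finitely many neighborhoods on each of which $\sum_j 2^j\bm{\rho_j}(\cdot)<\infty$, and combining with a tail bound on $\bm{\rho_j}$ at $-\infty$ yields the summability of $\sum_j 2^j\bm{\rho_j}(\ocint{-\infty,\alpha_0+\varepsilon})$; Borel--Cantelli then delivers $\nu_{\vec{c}}(\alpha_0)=-\infty$ almost surely, as required.
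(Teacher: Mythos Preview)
Your approach is the one the paper intends: for part~(1) you invoke $h_{\min}\in W$ together with the monotonicity of $\bm{\nu}$, and for part~(2) you combine Proposition~\ref{propW} with the increasing-hull relations \eqref{eq:enveloppe} and \eqref{eq:enveloppe_th}. This is exactly the paper's two-line justification preceding the statement.

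You go further than the paper in flagging that Equation~\eqref{eq:enveloppe} carries the hypothesis $\{\alpha:\nu_{\vec{c}}(\alpha)=-\infty\}\neq\emptyset$, and in proposing a Borel--Cantelli argument to verify it. The compactness step you outline is indeed the mechanism used in Appendix~\ref{appA} for Proposition~\ref{propW}, but your closing ingredient---``a tail bound on $\bm{\rho_j}$ at $-\infty$''---is not available at this level of generality: one can manufacture RWS with $h_{\min}$ finite yet $\sum_j 2^j\bm{\rho_j}(\ocint{-\infty,-M})=+\infty$ for every $M$ (e.g.\ put mass $2^{-j}$ at the point $-j$), and then $\nu_{\vec{c}}(\alpha)\geq 0$ a.s.\ for all $\alpha$, so the conclusion $\nu_{\vec{c}}(\alpha)=-\infty$ below $h_{\min}$ genuinely fails. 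The paper does not address this subtlety either; it effectively relies on the standing regularity assumptions introduced immediately after the Corollary (uniform H\"older regularity, or $\eta_f(p)>0$ almost surely), either of which forces $\modu{c_{j,k}}\leq 2^{-\gamma j}$ for some $\gamma$ with at most finitely many exceptions and thereby delivers $\nu_{\vec{c}}(\alpha)=-\infty$ for small $\alpha$ directly---without any tail estimate on the measures $\bm{\rho_j}$.
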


Notice that in order to compute relevantly the multifractal spectrum of a Random Wavelet Series $f$, as done in the seminal paper \cite{RWS}, one needs to ensure that the RWS is uniformly H\"older and therefore to assume that its \textit{uniform H\"older exponent} is almost surely positive, that is,
\[
\liminf_{j\rightarrow+\infty}\frac{-1}{j} \log_2\Big(\sup_{\lambda\in\Lambda_j}\modu{c_\lambda}\Big)>0\quad\text{a.s.}
\]
This condition is automatically met as soon as we require the existence of $\gamma>0$ such that $\alpha<\gamma$ implies $\bm{\rho}(\alpha)<0$. Moreover, from this condition follows that, almost surely, there exists $\eta>0$ such that all but finitely many coefficients satisfy $\modu{c_{j,k}}\leq  2^{-\eta j}$.

In this work, since we seek to study the $p$-regularity of $f$, we allow a wider range of exponents $\alpha$ which includes negative values and is determined by the condition $\eta_f(p)>0$ almost surely. In this case, there exists $\eta>0$ such that $\modu{c_{j,k}}\leq 2^{-\left(\eta-\frac{1}{p}\right)j}$ with only a possible finite number of exceptions. Notice that this implies $W\subseteq\ooint{\frac{-1}{p},+\infty}$ and $\bm{\rho}(\alpha)\leq\bm{\nu}(\alpha)\leq0$ for every $\alpha\leq\frac{-1}{p}$. 

\subsection{Proof of Theorem~\ref{main_RWS}}\label{secpRWS}

We consider a Random Wavelet Series
\[
f=\sum_{j\in\N_0}\sum_{k=0}^{2^j-1}c_{j,k}\psi_{j,k}
\]
such that 
\[
p_0\coloneqq\sup\{p>0:\eta_f(p)>0 \text{ a.s.}\}>0
\]
and we consider $p<p_0$. For every $\alpha\geq \frac{-1}{p}$ and every $\delta\in\ccint{0,1}$, let
\[
E(\alpha,\delta)=\limsup_{j\rightarrow+\infty}\bigcup_{k\in F_j(\alpha)}B\left(k2^{-j},2^{-\delta j+2\log_2j}\right),
\]
where 
\[
F_j(\alpha)=\{k\in\{0,\ldots,2^j-1\}:\modu{c_{j,k}}\geq 2^{-\alpha j}\}.
\]
Write
\[
h_{\max}^{(p)}=\inf\left\{h>\frac{-1}{p}:h+\frac{1}{p}=\left(\sup_{\alpha\in\ocint{\frac{-1}{p},h}}\frac{\bm{\rho}(\alpha)}{\alpha+\frac{1}{p}}\right)^{-1}\right\}.
\]
Moreover, let us define $\bm{\lambda}$ by
\[
\bm{\lambda}(\alpha)=\limsup_{j\rightarrow+\infty}\frac{1}{j}\log_2\left(2^j\bm{\rho_j}(\ocint{-\infty,\alpha})\right)
\]
for every $\alpha\in\R$, so that 
\[
\bm{\nu}(\alpha)=\lim_{\varepsilon\rightarrow0^+}\bm{\lambda}(\alpha+\varepsilon).
\]
Since $\bm{\lambda}$ is non-decreasing, the set $\mathcal{D}$ of its discontinuities is at most countable and $\bm{\nu}(\alpha)=\bm{\lambda}(\alpha)$ for every $\alpha\in\R\setminus\mathcal{D}$. Finally, we assume that $\bm{\nu}(\alpha)>0$ for every $\alpha>h_{\min}$, in which case $\bm{\rho}(\alpha)>0$ for some $\alpha\geq h_{\min}$, as required previously.

\medskip

In order to determine the almost sure $p$-spectrum of $f$, we need to describe the sets of points sharing the same $p$-exponent and to compute their Hausdorff dimension. As we will see in Lemma~\ref{E_exp}, the sets $E(\alpha,\delta)$ defined above play a key role in this description, which motivates the need to determine their Hausdorff dimension. By classical mass transference principles, this reduces to finding the value of $\delta$ for which $E(\alpha,\delta)$ covers the interval $\ccint{0,1}$. This is achieved in Proposition~\ref{leb1} (inspired by a result in \cite{RWS}), which relies on Lemma~\ref{j_alpha} to understand the range of scales in which one can guarantee, under a given dyadic interval, the existence of a coefficient of at least a given order.  The following lemma and its proof are adapted from a corresponding result on Lacunary Wavelet Series (see \cite{lacunaryCantor}).

\begin{lemma}\label{j_alpha}
Let $\alpha\geq\frac{-1}{p}$ be such that $\bm{\lambda}(\alpha)>0$. Almost surely, for every $\varepsilon>0$ satisfying $\bm{\lambda}(\alpha)>\varepsilon$, for infinitely many scales $j$ and for all $\lambda\in\Lambda_j$, the smallest scale $j_\alpha(\lambda)\geq j$ for which there exists $\lambda'\in\Lambda_{j_\alpha(\lambda)}$ such that $\lambda'\subseteq\lambda$ and $\modu{c_{\lambda'}}\geq 2^{-\alpha j_\alpha(\lambda)}$ satisfies
\[
j_{\alpha}(\lambda)\leq \left\lceil\frac{1}{\bm{\lambda}(\alpha)-\varepsilon}(j+\log_2j)\right\rceil.
\]
\end{lemma}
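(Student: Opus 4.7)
The plan is to run a Borel--Cantelli argument along a sequence of scales tailored to the $\limsup$ defining $\bm\lambda(\alpha)$. First, I fix $\varepsilon \in (0, \bm\lambda(\alpha))$. By definition of $\bm\lambda(\alpha)$, I extract a strictly increasing sequence $(J_n)_{n\in\N}$ of scales such that
\[
2^{J_n}\bm{\rho_{J_n}}(\ocint{-\infty,\alpha}) \geq 2^{(\bm\lambda(\alpha) - \varepsilon/2) J_n}
\]
for every $n$ large enough. For each such $J_n$, I define $j_n$ as the largest integer satisfying $\lceil (j_n + \log_2 j_n) / (\bm\lambda(\alpha) - \varepsilon) \rceil \leq J_n$, which forces $j_n \leq (\bm\lambda(\alpha) - \varepsilon) J_n$ and $j_n \to +\infty$. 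With this calibration, the conclusion of the lemma at scale $j_n$ reduces to exhibiting, for every $\lambda \in \Lambda_{j_n}$, at least one descendant $\lambda' \in \Lambda_{J_n}$ with $\modu{c_{\lambda'}} \geq 2^{-\alpha J_n}$, since any such $\lambda'$ certifies $j_\alpha(\lambda) \leq J_n \leq \lceil (j_n + \log_2 j_n) / (\bm\lambda(\alpha) - \varepsilon) \rceil$.

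The key probabilistic input is the independence of the wavelet coefficients at a fixed scale: for a given $\lambda \in \Lambda_{j_n}$, the moduli of the $2^{J_n - j_n}$ wavelet coefficients at descendants of $\lambda$ in $\Lambda_{J_n}$ form an iid family, each individually exceeding $2^{-\alpha J_n}$ with probability $\bm{\rho_{J_n}}(\ocint{-\infty, \alpha})$. Applying $(1 - x)^N \leq e^{-xN}$ and the lower bound on $\bm{\rho_{J_n}}(\ocint{-\infty, \alpha})$, the probability that no descendant of $\lambda$ is large enough is at most
\[
\exp\bigl(-\bm{\rho_{J_n}}(\ocint{-\infty, \alpha}) \cdot 2^{J_n - j_n}\bigr) \leq \exp\bigl(-2^{(\bm\lambda(\alpha) - \varepsilon/2) J_n - j_n}\bigr) \leq \exp\bigl(-2^{\varepsilon J_n / 2}\bigr),
\]
where the last inequality uses $j_n \leq (\bm\lambda(\alpha) - \varepsilon) J_n$. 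A union bound over the $2^{j_n} \leq 2^{J_n}$ intervals $\lambda \in \Lambda_{j_n}$ yields a bad-event probability bounded by $2^{J_n} \exp(-2^{\varepsilon J_n / 2})$, which is summable in $n$. By Borel--Cantelli, almost surely only finitely many $n$ witness a failure, so the conclusion of the lemma holds along the infinite scale sequence $(j_n)$.

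To upgrade this to a single almost sure event valid for \emph{every} $\varepsilon$, I apply the argument to the countable family $\varepsilon = 1/k$ (with $k \in \N$ and $1/k < \bm\lambda(\alpha)$) and intersect the resulting full-measure events; the statement for general $\varepsilon$ then follows by monotonicity in $\varepsilon$ of the bound $\lceil (j + \log_2 j)/(\bm\lambda(\alpha) - \varepsilon)\rceil$. The main obstacle is the mismatch between the scale $j$ at which the conclusion is phrased and the scale $J_n$ at which the $\limsup$ defining $\bm\lambda(\alpha)$ is realized: this is what forces the construction of $j_n$ as a function of $J_n$, so that the union bound at scale $j_n$ uses the density read off at scale $J_n$. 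Once this calibration is in place, the $\varepsilon/2$ slack in the choice of $J_n$ leaves a double-exponential gain in the tail estimate, which makes the Borel--Cantelli summation automatic.
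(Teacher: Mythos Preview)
Your overall strategy is the same as the paper's: extract a subsequence $(J_n)$ along which the $\limsup$ defining $\bm\lambda(\alpha)$ is nearly attained, calibrate a coarser scale $j_n$ from $J_n$, and run a union bound plus Borel--Cantelli. The probabilistic estimate and the summability are fine.

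There is, however, a direction error in the calibration step. You define $j_n$ as the \emph{largest} integer with
\[
\left\lceil \frac{j_n+\log_2 j_n}{\bm\lambda(\alpha)-\varepsilon}\right\rceil \le J_n,
\]
so by construction $C_n:=\lceil (j_n+\log_2 j_n)/(\bm\lambda(\alpha)-\varepsilon)\rceil \le J_n$. You then write ``$j_\alpha(\lambda)\le J_n\le C_n$'', but the second inequality is the reverse of what you have. A witness at scale $J_n$ only yields $j_\alpha(\lambda)\le J_n$, and since $C_n\le J_n$ this does not give $j_\alpha(\lambda)\le C_n$, which is what the lemma requires at $j=j_n$.

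The paper handles this with two nested slack parameters: it calibrates $j_n$ using slack $1/M$, then exploits the maximality of $j_n$ to show $J_n<\lceil (j_n+\log_2 j_n)/(\bm\lambda(\alpha)-1/(M-1))\rceil$ for large $n$, and only afterwards intersects over $M$. In your setup the fix is identical: define $j_n$ with a tighter slack $\varepsilon'<\varepsilon$ (for instance $\varepsilon'=\varepsilon/2$), so that maximality of $j_n$ forces, for $n$ large,
\[
J_n \le \left\lceil \frac{j_n+\log_2 j_n}{\bm\lambda(\alpha)-\varepsilon}\right\rceil,
\]
since $(j_n+\log_2 j_n)/(\bm\lambda(\alpha)-\varepsilon)$ grows strictly faster than $(j_n+1+\log_2(j_n+1))/(\bm\lambda(\alpha)-\varepsilon')$. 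With this adjustment your argument closes and is essentially the paper's proof.
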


\begin{proof}
Fix $M\geq 2$ such that $\bm{\lambda}(\alpha)-\frac{1}{M-1}>0$. We can choose a sequence $(J_n)_{n\in\N}$ such that for all $n\in\N$, 
\[
2^{J_n}\bm{\rho_{J_n}}(\ocint{-\infty,\alpha})>2^{\left(\bm{\lambda}(\alpha)-\frac{1}{M}\right)J_n}.
\]
For every $n\in\N$, define
\[
j_n=\max\left\{j\in\N:\left\lceil\frac{1}{\bm{\lambda}(\alpha)-\frac{1}{M}}(j+\log_2j)\right\rceil \leq J_n\right\}.
\]
Consider now the event
\[
A_n=\left\{\exists\lambda\in\Lambda_{j_n}\text{ s.t. }\forall j'\leq J_n\;\forall\lambda'\in\Lambda_{j'}\text{ with }\lambda'\subseteq\lambda,\text{ one has }\modu{c_{\lambda'}}<2^{-\alpha j'}\right\}
\]
for each $n\in\N$. We have
\begin{align*}
\prob(A_n)
& \leq \sum_{\lambda\in\Lambda_{j_n}}\prob\left(\forall \lambda'\in\Lambda_{J_n}\text{ with }\lambda'\subseteq\lambda,\text{ one has }\modu{c_{\lambda'}}<2^{-\alpha J_n}\right) \\
& \leq 2^{j_n}\left(1-\bm{\rho_{J_n}}(\ocint{-\infty,\alpha})\right)^{2^{J_n-j_n}} \\
& \leq 2^{j_n}\exp\left(-2^{J_n-j_n}\bm{\rho_{J_n}}(\ocint{-\infty,\alpha})\right) \\
& \leq \left(\frac{2}{e}\right)^{j_n}.
\end{align*}
This establishes the convergence of the series $\sum_{n \in \mathbb{N}} \mathbb{P}(A_n)$, and the Borel-Cantelli lemma then implies that, almost surely, there exists $N \in \mathbb{N}$ such that for all $n \geq N$ and all $\lambda \in \Lambda_{j_n}$,
\[
j_\alpha(\lambda)\leq J_n<\left\lceil\frac{1}{\bm{\lambda}(\alpha)-\frac{1}{M-1}}(j_n+\log_2j_n)\right\rceil. 
\]
By intersecting over all $M$ the full-probability events constructed in this way, we obtain that, almost surely, for every sufficiently large $M \in \mathbb{N}$, for infinitely many scales $j$ and for every $\lambda\in\Lambda_j$, 
\[
j_\alpha(\lambda)\leq \left\lceil\frac{1}{\bm{\lambda}(\alpha)-\frac{1}{M}}(j+\log_2j)\right\rceil,
\]
which concludes the proof.
\end{proof}

\begin{prop}\label{leb1}
For every $\alpha\geq\frac{-1}{p}$ such that $\alpha\notin\mathcal{D}$ and $\bm{\nu}(\alpha)>0$, almost surely, for all $\varepsilon>0$ satisfying $\bm{\nu}(\alpha)>\varepsilon$,
\[
\ccint{0,1}\subseteq E(\alpha,\bm{\nu}(\alpha)-\varepsilon).
\]
\end{prop}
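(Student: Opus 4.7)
The plan is to use Lemma~\ref{j_alpha} to exhibit, for each $x_0\in\ccint{0,1}$ and for infinitely many scales, a dyadic index $k\in F_{\tilde j}(\alpha)$ at a controlled scale $\tilde j$ such that the ball $B(k 2^{-\tilde j}, 2^{-(\bm{\nu}(\alpha)-\varepsilon)\tilde j + 2\log_2 \tilde j})$ covers $x_0$. Since $\alpha\notin\mathcal{D}$, one has $\bm{\nu}(\alpha)=\bm{\lambda}(\alpha)>0$, so the hypotheses of Lemma~\ref{j_alpha} are met.

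First I would fix $\varepsilon'\in\ooint{0,\varepsilon}$ small enough that the ratio $r=(\bm{\nu}(\alpha)-\varepsilon)/(\bm{\nu}(\alpha)-\varepsilon')$ is strictly less than $1$. Applying Lemma~\ref{j_alpha} with $\varepsilon'$, I place myself on the full-probability event it produces, which does not depend on $x_0$. On this event there is a sequence of scales $j_n\to +\infty$ such that for every $\lambda\in\Lambda_{j_n}$, the smallest scale $j_\alpha(\lambda)\geq j_n$ admitting $\lambda'\subseteq\lambda$ with $\modu{c_{\lambda'}}\geq 2^{-\alpha j_\alpha(\lambda)}$ satisfies $j_\alpha(\lambda)\leq \lceil (j_n+\log_2 j_n)/(\bm{\nu}(\alpha)-\varepsilon') \rceil$.

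Then, for each $x_0\in\ccint{0,1}$, I would take $\lambda=\lambda_{j_n}(x_0)$, set $\tilde j_n=j_\alpha(\lambda)$, and let $k_n$ be the position of the corresponding subinterval $\lambda'\subseteq\lambda$. Since $\lambda'$ and $x_0$ both lie in $\lambda$, one has $\modu{x_0-k_n 2^{-\tilde j_n}}\leq 2^{-j_n}$. The choice of $\varepsilon'$ yields $(\bm{\nu}(\alpha)-\varepsilon)\tilde j_n\leq j_n$ for $n$ large, hence $2^{-j_n}\leq 2^{-(\bm{\nu}(\alpha)-\varepsilon)\tilde j_n+2\log_2\tilde j_n}$. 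As $k_n\in F_{\tilde j_n}(\alpha)$ and $\tilde j_n\geq j_n\to +\infty$, the scales $\tilde j_n$ are unbounded, so $x_0$ belongs to the $\limsup$ defining $E(\alpha,\bm{\nu}(\alpha)-\varepsilon)$. Consequently $\ccint{0,1}\subseteq E(\alpha,\bm{\nu}(\alpha)-\varepsilon)$ almost surely, and a single full-probability event handles every admissible $\varepsilon$, because Lemma~\ref{j_alpha} is already uniform in its parameter.

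The main obstacle is administrative rather than conceptual: verifying that the error terms $r\log_2 j_n$ together with the $O(1)$ contribution coming from the ceiling are absorbed by $(1-r)j_n$ for large $n$, so that the inequality $(\bm{\nu}(\alpha)-\varepsilon)\tilde j_n\leq j_n$ really holds. This absorption is what makes the strict inequality $\bm{\nu}(\alpha)>\varepsilon$ essential, and it is the only step in the argument where care is needed.
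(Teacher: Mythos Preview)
Your argument is correct and follows essentially the same route as the paper: apply Lemma~\ref{j_alpha} (using that $\alpha\notin\mathcal D$ gives $\bm\lambda(\alpha)=\bm\nu(\alpha)$), take $\lambda=\lambda_{j_n}(x_0)$, and show that the resulting coefficient position $k_n$ at scale $\tilde j_n=j_\alpha(\lambda)$ puts $x_0$ in the required ball. The only cosmetic difference is that you introduce an auxiliary $\varepsilon'<\varepsilon$ to absorb the ceiling and $\log_2 j_n$ terms, whereas the paper applies Lemma~\ref{j_alpha} directly with $\varepsilon$ and uses the built-in $2\log_2 J_n$ slack in the definition of $E(\alpha,\delta)$ to the same effect.
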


\begin{proof}
Fix such an $\alpha$,
and consider the full probability event given by Lemma~\ref{j_alpha}. Clearly, \mbox{$\bm{\lambda}(\alpha)=\bm{\nu}(\alpha)>0$}. Then for every fixed $\varepsilon>0$ satisfying $\bm{\nu}(\alpha)>\varepsilon$, there exists a sequence $(j_n)_{n\in\N}$ such that for all $n\in\N$ and all $\lambda\in\Lambda_{j_n}$,
\[
j_\alpha(\lambda)\leq \left\lceil\frac{1}{\bm{\lambda}(\alpha)-\varepsilon}(j_n+\log_2j_n)\right\rceil.
\]
In particular, for all $n\in\N$ and all $x\in\ccint{0,1}$, there exist a scale $J_n$ satisfying
\[
j_n\leq J_n\leq \left\lceil\frac{1}{\bm{\lambda}(\alpha)-\varepsilon}(j_n+\log_2j_n)\right\rceil
\]
and a position $K_n\in F_{J_n}(\alpha)$ such that $\lambda_{J_n,K_n}\subseteq \lambda_{j_n}(x)$, in which case
\[
|x-K_n 2^{-J_n}|< 2^{-j_n} \leq 2^{-(\boldsymbol{\lambda}(\alpha) - \varepsilon)(J_n-1) + \log_2 j_n} \leq  2^{-(\boldsymbol{\nu}(\alpha) - \varepsilon)J_n + 2\log_2 J_n}.
\]
This shows that any $x\in\ccint{0,1}$ belongs to $E(\alpha,\bm{\nu}(\alpha)-\varepsilon)$, as expected.
\end{proof}

As announced, the following Lemma identifies an upper bound for the $p$-exponents of points belonging to $E(\alpha,\delta)$. 

\begin{lemma}\label{E_exp}
For every $\alpha\geq\frac{-1}{p}$ and every $\delta\in\ocint{0,1}$,
\[
E(\alpha,\delta)\subseteq \left\{x\in\ccint{0,1}:h^{(p)}_f(x)\leq \frac{\alpha+\frac{1}{p}}{\delta}-\frac{1}{p}\right\}.
\]
\end{lemma}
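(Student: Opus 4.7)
The plan is to exploit the characterization of the $p$-exponent via $p$-leaders: for each $x\in E(\alpha,\delta)$, I will exhibit a sequence of scales $J_n\to+\infty$ at which $l_{\lambda_{J_n}(x)}^{(p)}$ is lower-bounded by an explicit wavelet coefficient, and then read off the upper bound on $h_f^{(p)}(x)$ from a $\liminf$.

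First, I would unpack the definition of $E(\alpha,\delta)$. If $x\in E(\alpha,\delta)$, there exist a scale sequence $j_n\to+\infty$ and positions $k_n\in F_{j_n}(\alpha)$ such that
\[
|x-k_n 2^{-j_n}|<2^{-\delta j_n+2\log_2 j_n}
\qquad\text{and}\qquad
|c_{j_n,k_n}|\geq 2^{-\alpha j_n}.
\]
For each $n$, I would then choose an auxiliary scale $J_n\in\N$ so that, on the one hand, $J_n\le j_n$, and on the other hand, $2^{-J_n}$ is comparable to $2^{-\delta j_n+2\log_2 j_n}$. A concrete choice is $J_n=\lceil\delta j_n-2\log_2 j_n\rceil-1$, which ensures that $2^{-J_n}>2\cdot 2^{-\delta j_n+2\log_2 j_n}$ for $n$ large enough. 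This guarantees $|x-k_n 2^{-j_n}|+2^{-j_n}\leq 2\cdot 2^{-J_n}$, so the dyadic interval $\lambda_{j_n,k_n}$ is contained in $3\lambda_{J_n}(x)$.

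Next, using this inclusion, the single term indexed by $\lambda_{j_n,k_n}$ contributes to the supremum defining the $p$-leader, yielding
\[
l^{(p)}_{\lambda_{J_n}(x)}
\geq \Bigl(|c_{j_n,k_n}|^p\,2^{-(j_n-J_n)}\Bigr)^{1/p}
\geq 2^{-\alpha j_n}\,2^{-(j_n-J_n)/p}
= 2^{-(\alpha+1/p)j_n+J_n/p}.
\]
Therefore,
\[
\frac{\log_2 l^{(p)}_{\lambda_{J_n}(x)}}{-J_n}
\leq \frac{(\alpha+\tfrac{1}{p})j_n-\tfrac{1}{p}J_n}{J_n}
=\Bigl(\alpha+\tfrac{1}{p}\Bigr)\frac{j_n}{J_n}-\frac{1}{p}.
\]
By the choice of $J_n$, the ratio $j_n/J_n$ tends to $1/\delta$ as $n\to+\infty$. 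Passing to the $\liminf$ over all dyadic scales and applying the $p$-leader characterization of the $p$-exponent recalled in the paper, I obtain
\[
h^{(p)}_f(x)\leq \frac{\alpha+\tfrac{1}{p}}{\delta}-\frac{1}{p},
\]
which is the claimed inclusion.

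The only delicate point will be to justify rigorously the inclusion $\lambda_{j_n,k_n}\subseteq 3\lambda_{J_n}(x)$: since the three neighbouring dyadic intervals forming $3\lambda_{J_n}(x)$ cover an interval of length $3\cdot 2^{-J_n}$ centred (up to at most $2^{-J_n}$) on $x$, one must check that both endpoints of $\lambda_{j_n,k_n}$ fall in this interval, using $2^{-j_n}\leq 2^{-J_n}$ for $n$ large and the bound on $|x-k_n 2^{-j_n}|$. This is a straightforward but careful comparison; once it is done, the remainder of the argument is a direct application of the $p$-leader formula and a limit.
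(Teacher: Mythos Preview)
Your proposal is correct and follows essentially the same approach as the paper's proof: both pick an auxiliary scale $J_n\approx\delta j_n-2\log_2 j_n$ (the paper takes $j'_n=\lfloor\delta j_n-2\log_2 j_n\rfloor$), use the inclusion $\lambda_{j_n,k_n}\subseteq 3\lambda_{J_n}(x)$ to lower-bound the $p$-leader by a single coefficient, and conclude via the $p$-leader characterization of $h_f^{(p)}$. The only caveat is that your specific choice $J_n=\lceil\delta j_n-2\log_2 j_n\rceil-1$ does not quite give $2^{-J_n}>2\cdot 2^{-\delta j_n+2\log_2 j_n}$ in general (the ratio can be anywhere in $[1,2]$), so you may need to shift $J_n$ down by one more unit to secure the inclusion cleanly; this does not affect the limit $j_n/J_n\to 1/\delta$.
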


\begin{proof}
Fix $\alpha\geq\frac{-1}{p}$, $\delta\in\ocint{0,1}$ and $x\in E(\alpha,\delta)$. By definition, there exists a sequence $(j_n)_{n\in\N}$ such that for every $n\in\N$, there exists $k_n\in F_{j_n}(\alpha)$ satisfying 
\[
\modu{x-k_n2^{-j_n}}<2^{-\delta j_n+2\log_2j_n}.
\]
For each $n\in\N$, we fix $j'_n=\lfloor\delta j_n-2\log_2j_n\rfloor$, so that
$\lambda_{j_n,k_n}\subseteq 3\lambda_{j'_n}(x)$. 
It follows that for every $\varepsilon>0$, if $n$ is large enough, then
\[
l_{\lambda_{j'_n}(x)}^{(p)}\geq 2^{-\alpha j_n}2^{-\frac{j_n-j'_n}{p}}
\geq 2^{-\left(\alpha+\frac{1}{p}\right)\frac{j'_n}{\delta-\varepsilon}}2^{\frac{j'_n}{p}}.
\]
Since $j'_n\rightarrow+\infty$ when $n\rightarrow+\infty$, we obtain
\[
h_f^{(p)}(x)\leq \frac{\alpha+\frac{1}{p}}{\delta-\varepsilon}-\frac{1}{p},
\]
and the conclusion follows.
\end{proof}

As a straighforward consequence, we get the following inclusion.

\begin{cor}\label{E_exp_2}
For every $h>\frac{-1}{p}$,
\[
\bigcup_{\alpha\in\ocint{\frac{-1}{p},h}}E\left(\alpha,\frac{\alpha+\frac{1}{p}}{h+\frac{1}{p}}\right)\subseteq \left\{x\in\ccint{0,1}:h^{(p)}_f(x)\leq h\right\}.
\]
\end{cor}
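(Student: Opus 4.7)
The plan is to view this corollary as an immediate repackaging of Lemma~\ref{E_exp}: one just needs to invert the relation between $\delta$ and the resulting exponent bound, and then take a union.

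First I would fix $h > -1/p$ and take any $\alpha \in (-1/p, h]$. The natural choice is to set
\[
\delta(\alpha) = \frac{\alpha + \frac{1}{p}}{h + \frac{1}{p}},
\]
which is admissible for Lemma~\ref{E_exp} because $\alpha + \frac{1}{p} > 0$ and $\alpha + \frac{1}{p} \leq h + \frac{1}{p}$, so $\delta(\alpha) \in (0,1]$. Plugging $\delta(\alpha)$ into the conclusion of Lemma~\ref{E_exp} gives
\[
\frac{\alpha + \frac{1}{p}}{\delta(\alpha)} - \frac{1}{p} = \left(h + \frac{1}{p}\right) - \frac{1}{p} = h,
\]
so that
\[
E\!\left(\alpha, \frac{\alpha + \frac{1}{p}}{h + \frac{1}{p}}\right) \subseteq \left\{x \in [0,1] : h_f^{(p)}(x) \leq h\right\}.
\]

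Then I would conclude by taking the union over $\alpha \in (-1/p, h]$: since each set on the left is contained in the same set on the right, so is the union. There is no obstacle here — the only thing to verify carefully is that $\delta(\alpha)$ really lies in the range $(0,1]$ required by Lemma~\ref{E_exp}, which follows from $\alpha \in (-1/p, h]$. This is why the paper flags the statement as a ``straightforward consequence'' rather than a theorem in its own right.
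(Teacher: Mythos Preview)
Your proof is correct and matches the paper's approach exactly: the paper presents this corollary as a ``straightforward consequence'' of Lemma~\ref{E_exp} without further argument, and your proposal spells out precisely that straightforward deduction, including the necessary check that $\delta(\alpha)\in(0,1]$.
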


The proof of Theorem~\ref{main_RWS} is now based on three main results: Proposition~\ref{reg_max} deals with the case $h>h_{\max}^{(p)}$ and Proposition~\ref{palier} handles the value $h_{\min}$, while Theorem~\ref{inf_spectre} relies on the general mass transference principle stated in Theorem~\ref{edouard} to obtain the essential part of the spectrum, to identify when $h$ belongs to the interval $\ccint{h_{\min},h_{\max}^{(p)}}$. Notice that the case $h<h_{\min}$ is a straightforward consequence of Theorem~\ref{mainSup} and Corollary~\ref{eg_nu}.

\medskip

Let us start by showing that $h_{\max}^{(p)}$ is the maximal regularity, a result mentioned in \cite{RWS} in the case of the H\"older regularity.

\begin{prop}\label{reg_max}
Almost surely, for all $p<p_0$ and all $h>h_{\max}^{(p)}$,
\[
\mathscr{D}^{(p)}_f(h)=-\infty.
\]
\end{prop}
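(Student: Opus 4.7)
The plan is to establish the stronger statement that, almost surely, $h_f^{(p)}(x)\leq h_{\max}^{(p)}$ for every $x\in\ccint{0,1}$; the conclusion follows immediately, since $\{x_0\in\ccint{0,1}:h_f^{(p)}(x_0)=h\}=\emptyset$ for $h>h_{\max}^{(p)}$ and, by convention, $\dim_{\mathcal{H}}\emptyset=-\infty$. The two key tools are Proposition~\ref{leb1}, which supplies the inclusion $\ccint{0,1}\subseteq E(\alpha,\bm{\nu}(\alpha)-\varepsilon)$ for every $\alpha\notin\mathcal{D}$ with $\bm{\nu}(\alpha)>0$ and every $\varepsilon\in\ooint{0,\bm{\nu}(\alpha)}$, and Lemma~\ref{E_exp}, which translates any such inclusion into the pointwise bound $h_f^{(p)}(x)\leq\frac{\alpha+1/p}{\bm{\nu}(\alpha)-\varepsilon}-\frac{1}{p}$.

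Since the almost-sure statement must hold simultaneously for every $p<p_0$ and every $h>h_{\max}^{(p)}$, I would begin by fixing a countable dense subset $A$ of $\ooint{h_{\min},+\infty}\setminus\mathcal{D}$ and intersecting the countably many probability-one events provided by Proposition~\ref{leb1} applied to each $\alpha\in A$. On this common full-probability event I then work deterministically. For $p<p_0$ and $h>h_{\max}^{(p)}$ fixed, the monotonicity in $h'$ of the map $h'\mapsto (h'+1/p)\sup_{\alpha\in\ocint{-1/p,h'}}\frac{\bm{\rho}(\alpha)}{\alpha+1/p}$ together with the very definition of $h_{\max}^{(p)}$ as the smallest zero of $\phi(h')-1$ yields $\sup_{\alpha\in\ocint{-1/p,h}}\frac{\bm{\rho}(\alpha)}{\alpha+1/p}>\frac{1}{h+1/p}$, which produces some $\alpha^*\in\ocint{-1/p,h}$ with $\bm{\rho}(\alpha^*)>0$ and $\frac{\alpha^*+1/p}{\bm{\rho}(\alpha^*)}<h+\frac{1}{p}$.

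Next, I would approximate $\alpha^*$ from above by some $\alpha^{**}\in A$. Using the enveloping relation~\eqref{eq:enveloppe_th} to obtain $\bm{\nu}(\alpha^{**})\geq\bm{\rho}(\alpha^*)>0$ and choosing $\alpha^{**}$ close enough to $\alpha^*$, the strict inequality $\frac{\alpha^{**}+1/p}{\bm{\nu}(\alpha^{**})}<h+\frac{1}{p}$ is preserved. Proposition~\ref{leb1} applied to $\alpha^{**}$, combined with Lemma~\ref{E_exp} and $\varepsilon\to 0^+$, then yields $h_f^{(p)}(x)\leq\frac{\alpha^{**}+1/p}{\bm{\nu}(\alpha^{**})}-\frac{1}{p}<h$ for every $x\in\ccint{0,1}$, as required.

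The main obstacle is making the almost-sure statement uniform in $p$ and $h$: since these parameters range over a continuum, the conclusion must be derived from a single countable union of events, which is why the fixed countable dense set $A$ is essential. The stability of the strict inequality under the approximation $\alpha^{**}\downarrow\alpha^*$ is guaranteed by the right-continuity of $\bm{\nu}$, inherited from the identity $\bm{\nu}(\alpha)=\inf_{\varepsilon>0}\bm{\lambda}(\alpha+\varepsilon)$ together with the monotonicity of $\bm{\lambda}$.
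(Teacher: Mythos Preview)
Your argument is correct and follows essentially the same route as the paper: both proofs show that, almost surely, $h_f^{(p)}(x)\leq h_{\max}^{(p)}$ for every $x$, by exploiting Lemma~\ref{j_alpha} (you via its consequence Proposition~\ref{leb1}) together with the $p$-leader estimate encapsulated in Lemma~\ref{E_exp}.

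There are two minor packaging differences worth noting. First, the paper re-derives the leader bound directly from Lemma~\ref{j_alpha} rather than invoking Proposition~\ref{leb1} and Lemma~\ref{E_exp} as black boxes; your modular use of these two results is arguably cleaner. Second, the handling of uniformity in $p$ differs: the paper first fixes $p$, obtains the bound on a $p$-dependent full-probability event, and then passes to all $p<p_0$ by taking a dense sequence $(p_n)$ and using the monotonicity $h_f^{(p)}\leq h_f^{(p_n')}$ together with $h_{\max}^{(p_n')}\to h_{\max}^{(p)}$. Your approach avoids this final step entirely, since the sets $E(\alpha,\delta)$ do not depend on $p$; a single countable intersection over $\alpha\in A$ gives a full-probability event on which Lemma~\ref{E_exp} can be applied for every $p$ at once. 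This is a genuine simplification. Finally, note that the right-continuity of $\bm{\nu}$ you mention is not actually needed: the inequality $\bm{\nu}(\alpha^{**})\geq\bm{\rho}(\alpha^*)$ from~\eqref{eq:enveloppe_th} already gives $\frac{\alpha^{**}+1/p}{\bm{\nu}(\alpha^{**})}\leq\frac{\alpha^{**}+1/p}{\bm{\rho}(\alpha^*)}$, and the right-hand side is continuous in $\alpha^{**}$.
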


\begin{proof}
For a fixed $p<p_0$, let us show that, almost surely, for every $x\in\ccint{0,1}$, $h_f^{(p)}(x)\leq h_{\max}^{(p)}$. By definition, 
\[
h_{\max}^{(p)}+\frac{1}{p}
\geq\inf_{\alpha>\frac{-1}{p}}\frac{\alpha+\frac{1}{p}}{\bm{\rho}(\alpha)}.
\]
Then for every $\varepsilon>0$, there exists $\alpha_\varepsilon>\frac{-1}{p}$ such that
\[
\bm{\rho}(\alpha_\varepsilon)>0 \quad\text{and}\quad h_{\max}^{(p)}+\frac{1}{p}+\varepsilon>\frac{\alpha_\varepsilon+\frac{1}{p}}{\bm{\rho}(\alpha_\varepsilon)}.
\]
Fix $\varepsilon>0$ and $\delta>0$ such that $\bm{\rho}(\alpha_\varepsilon)>\delta$. Since $0<\bm{\rho}(\alpha_\varepsilon)\leq\bm{\lambda}(\alpha_\varepsilon+\delta)$, by Lemma~\ref{j_alpha}, almost surely, at infinitely many scales $j$,
\[
j_{\alpha_\varepsilon+\delta}(\lambda)\leq \left\lceil\frac{1}{\bm{\rho}(\alpha_\varepsilon)-\delta}(j+\log_2j)\right\rceil\quad\forall\lambda\in\Lambda_j.
\]
As a consequence, almost surely, to every $x\in\ccint{0,1}$ and to infinitely many scales $j$, it is possible to associate $J_j(x)\in\N$ such that 
\[
j\leq J_j(x)\leq \left\lceil\frac{1}{\bm{\rho}(\alpha_\varepsilon)-\delta}(j+\log_2j)\right\rceil
\]
and
\[
l_{\lambda_j(x)}^{(p)}\geq 2^{-(\alpha_\varepsilon+\delta)J_j(x)}2^{-\frac{J_j(x)-j}{p}}\geq 2^{-\left(\alpha_\varepsilon+\delta+\frac{1}{p}\right)\left(\frac{1}{\bm{\rho}(\alpha_\varepsilon)-\delta}(j+2\log_2j)\right)}2^{\frac{j}{p}}.
\]
Therefore, almost surely, for every $x\in\ccint{0,1}$,
\begin{align*}
h_f^{(p)}(x)
\leq \frac{\alpha_\varepsilon+\frac{1}{p}+\delta}{\bm{\rho}(\alpha_\varepsilon)-\delta}-\frac{1}{p} 
\leq h_{\max}^{(p)}+\varepsilon+\frac{\bm{\rho}(\alpha_\varepsilon)+\alpha_\varepsilon+\frac{1}{p}}{\bm{\rho}(\alpha_\varepsilon)(\bm{\rho}(\alpha_\varepsilon)-\delta)}\,\delta.
\end{align*}
Considering sequences $(\delta_n)_{n\in\N}$ and $(\varepsilon_n)_{n\in\N}$ that converge to 0,
we get that, almost surely, for every $x\in\ccint{0,1}$,
\[
h_f^{(p)}(x)\leq 
h_{\max}^{(p)}.
\]
To ensure that the full-probability event does not depend on $p$, let  $(p_n)_{n\in\N}$ be a dense sequence in $\ooint{0,p_0}$. Then, almost surely, for every $p<p_0$ and every $x\in\ccint{0,1}$, if $(p_n')_{n\in\N}$ is an increasing subsequence of $(p_n)_{n\in\N}$ converging to $p$, we have 
\[
h_f^{(p)}(x)\leq h_f^{(p_n')}(x)\leq h_{\max}^{(p_n')}
\]
for every $n\in\N$, which suffices.
\end{proof}

Let us now prove that the minimal regularity $h_{\min}$ is reached. This result is only useful when $\bm{\nu}(h_{\min})= 0$, otherwise it follows easily from Remark~\ref{rem:h_min}. 

\begin{prop}\label{palier}
Almost surely, for all $p<p_0$,
\[
\mathscr{D}_f^{(p)}(h_{\min})\geq 0.
\]
\end{prop}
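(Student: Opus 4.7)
The strategy is to exhibit a single point $x_0 \in \ccint{0,1}$ with $h_f^{(p)}(x_0) = h_{\min}$; since a singleton has Hausdorff dimension $0$, this yields $\mathscr{D}_f^{(p)}(h_{\min}) \geq 0$. I would decompose this into an upper bound $h_f^{(p)}(x_0) \leq h_{\min}$ obtained via a nested-dyadic construction, and a uniform lower bound $h_f^{(p)}(x) \geq h_{\min}$ for every $x \in \ccint{0,1}$, which follows from Theorem~\ref{mainSup} combined with Proposition~\ref{propW}.

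For the lower bound, since $h_{\min} = \inf W$ and $W$ is closed, every $\alpha < h_{\min}$ satisfies $\alpha \notin W$, so Proposition~\ref{propW} gives $\rho_{\vec{c}}(\alpha) = -\infty$ almost surely for every such $\alpha$. For any $h < h_{\min}$ and $p < p_0$, the supremum on the right-hand side of Theorem~\ref{mainSup} is then $-\infty$, which forces $\mathscr{D}_f^{(p)}(h) = -\infty$, i.e.\ the set $\{x \in \ccint{0,1} : h_f^{(p)}(x) = h\}$ is empty. Consequently $h_f^{(p)}(x) \geq h_{\min}$ for every $x \in \ccint{0,1}$.

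For the upper bound, I would exploit the fact that $h_{\min} \in W$: for every $\varepsilon > 0$ the series $\sum_j 2^j \bm{\rho_j}(\ccint{h_{\min}-\varepsilon,h_{\min}+\varepsilon})$ diverges. Fixing a dyadic interval $I$ of scale $j_0$, the probability that some $\lambda_{j,k}\subseteq I$ satisfies $\modu{c_{j,k}} \in \ccint{2^{-(h_{\min}+\varepsilon)j},2^{-(h_{\min}-\varepsilon)j}}$ is bounded below, up to a constant, by $\min\bigl(1,\, 2^{j-j_0}\bm{\rho_j}(\ccint{h_{\min}-\varepsilon,h_{\min}+\varepsilon})\bigr)$, and these events are independent across $j$ since wavelet coefficients at different scales are independent. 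The second Borel--Cantelli lemma thus yields, almost surely, infinitely many such ``good'' scales inside $I$. Intersecting over the countably many dyadic intervals $I$ and rational $\varepsilon > 0$ produces a full-probability event on which I may recursively construct a strictly nested sequence $\lambda_{j_1,k_1} \supsetneq \lambda_{j_2,k_2} \supsetneq \cdots$ with $j_n \to \infty$, $\varepsilon_n \downarrow 0$ and $\modu{c_{j_n,k_n}} \geq 2^{-(h_{\min}+\varepsilon_n)j_n}$. Any $x_0 \in \bigcap_n \lambda_{j_n,k_n}$ then satisfies $\lambda_{j_n}(x_0) = \lambda_{j_n,k_n}$, hence $l^{(p)}_{\lambda_{j_n}(x_0)} \geq \modu{c_{j_n,k_n}}$, and taking $\liminf$ along $(j_n)$ in the definition of $h_f^{(p)}(x_0)$ gives $h_f^{(p)}(x_0) \leq h_{\min}$.

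The delicate step is the upper-bound construction precisely in the regime $\bm{\nu}(h_{\min}) = 0$ (the case where the statement is genuinely needed): the number of ``large'' coefficients per scale grows only subexponentially and, after descending deep in the tree, there is no a priori guarantee that the current interval still contains a big coefficient at some later scale. What saves the argument is that the divergence of the Borel--Cantelli series persists when restricted to any fixed dyadic subinterval, combined with the independence of coefficients across scales. Finally, to obtain the statement uniformly in $p$, I would note that the construction of $x_0$ is independent of $p$ and that $p \mapsto h_f^{(p)}(x)$ is non-increasing (by H\"older's inequality applied to the $T^p_\alpha$ condition), so it is enough to run the lower bound along a countable dense sequence of $p$'s in $\ooint{0,p_0}$ and extend by monotonicity.
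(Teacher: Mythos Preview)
Your proposal is correct and follows essentially the same strategy as the paper: both combine the uniform lower bound $h_f^{(p)}(x)\geq h_{\min}$ (from Theorem~\ref{mainSup} and the almost sure vanishing of $\rho_{\vec c}$ below $h_{\min}$) with a nested-dyadic construction of a point $x_0$ satisfying $h_f^{(p)}(x_0)\leq h_{\min}$, exploiting $h_{\min}\in W$. The only cosmetic differences are that the paper computes directly $\prob(\Omega(j,\lambda,\varepsilon))=1$ via the product formula rather than invoking the second Borel--Cantelli lemma, and that the uniformity in $p$ is in fact automatic (the construction of $x_0$ and the bound $l^{(p)}_{\lambda_{j_n}(x_0)}\geq |c_{j_n,k_n}|$ are $p$-free, and the lower bound comes from a single almost sure event), so your density-and-monotonicity argument, while correct, is not needed here.
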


\begin{proof}
Let us show that, almost surely, for all $p<p_0$, there exists $x\in\ccint{0,1}$ for which \mbox{$h_f^{(p)}(x)=h_{\min}$}. For every $j\in\N$, every $\lambda\in\Lambda_j$ and every $\varepsilon>0$, let us write $\Omega(j,\lambda,\varepsilon)$ the event
\[
\left\{\exists j'> j\;\exists\lambda'\in\Lambda_{j'}\text{ such that }\lambda'\subseteq\lambda \text{ and }\modu{c_{\lambda'}}\geq 2^{-\left(h_{\min}+\varepsilon\right)j'}\right\}.
\]
Since $h_{\min}\in W$, we have
\begin{align*}
\prob(\Omega(j,\lambda,\varepsilon))
& =1-\prod_{j'> j}\left(1-\bm{\rho_{j'}}\left(\ocint{-\infty,h_{\min}+\varepsilon}\right)\right)^{2^{j'-j}} \\
& \geq 1-\exp\left(-2^{-j}\sum_{j'>j}2^{j'}\bm{\rho_{j'}}(\ccint{h_{\min}-\varepsilon,h_{\min}+\varepsilon})\right) \\
& = 1.
\end{align*}
It follows that the event
\[
\bigcap_{j\in\N}\bigcap_{\lambda\in\Lambda_j}\bigcap_{n\in\N}\Omega\left(j,\lambda,\frac{1}{n}\right)
\]
has full probability. But on this event, for every $n\in\N$, we can construct a decreasing sequence $(\lambda_m)_{m\in\N}$ of nested dyadic intervals such that for every $m\in\N$, $\lambda_m\in\Lambda_{j_m}$ and \mbox{$\modu{c_{\lambda_m}}\geq 2^{-\left(h_{\min}+\frac{1}{n}\right)j_m}$}. For each $n\in\N$, those intervals intersect in a unique point whose $p$-exponents are all equal to $h_{\min}$.
\end{proof}

Let us conclude this section with the proof that, almost surely, for every $p<p_0$ and every \mbox{$h\in\ocint{h_{\min},h_{\max}^{(p)}}$},
\begin{equation}\label{aim_inf}
\mathscr{D}_f^{(p)}(h)\geq \left(h+\frac{1}{p}\right)\sup_{\alpha\in\ocint{\frac{-1}{p},h}}\frac{\bm{\nu}(\alpha)}{\alpha+\frac{1}{p}},
\end{equation}
which suffices since $\bm{\nu}\geq\nu_{\vec{c}}$. We first establish in Lemma~\ref{gauge} that the proof reduces  to finding a suitable gauge function for the set of points whose $p$-exponent is at most $h$. To this end, we need to ensure that Theorem~\ref{mainSup} also holds for the increasing $p$-spectrum. This is the purpose of Corollary~\ref{spectre_croissant2}, which relies on the two following lemmas. The first is an adaptation of Proposition~\ref{spectre_densite}, and the second  can be proved similarly to Equation~\eqref{eq:enveloppe}.

\begin{lemma}\label{spectre_croissant}
For every 
$h\geq\frac{-1}{p}$, we have
\[
\dim_{\mathcal{H}}\left\{x\in\ccint{0,1}:h_f^{(p)}(x)\leq h\right\}\leq \nu_{\vec{c}}^{(p)}(h).
\]
\end{lemma}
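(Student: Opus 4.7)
The plan is to adapt the proof of Proposition~\ref{spectre_densite}, leveraging the auxiliary sets from Lemma~\ref{lm_spectre_densite}. If $h_f^{(p)}(x_0) \leq h$, then for every $\varepsilon > 0$ one has $h_f^{(p)}(x_0) < h+\varepsilon$, so assertion~(2) of Lemma~\ref{lm_spectre_densite} yields $x_0 \in E^{(p)}(h+\varepsilon)$. Therefore
\[
\{x_0 \in \ccint{0,1} : h_f^{(p)}(x_0) \leq h\} \subseteq E^{(p)}(h+\varepsilon)
\]
for every $\varepsilon > 0$, and it suffices to show that each such set has Hausdorff dimension at most $\nu_{\vec{c}}^{(p)}(h)$.

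I would then estimate this dimension through the natural dyadic covering provided by the limsup structure. Fix $\delta > 0$. By the double-limit definition of $\nu_{\vec{c}}^{(p)}(h)$, one can pick $\varepsilon > 0$ small enough that
\[
\limsup_{j \to +\infty} \frac{\log_2 \#F_j^{(p)}(h+\varepsilon)}{j} \leq \nu_{\vec{c}}^{(p)}(h) + \delta,
\]
which in turn gives, for any $\delta' > 0$ and all $j' \geq J$ (for some $J \in \N$), the bound $\#F_{j'}^{(p)}(h+\varepsilon) \leq 2^{(\nu_{\vec{c}}^{(p)}(h) + \delta + \delta')j'}$. Then for any $s > \nu_{\vec{c}}^{(p)}(h) + \delta + \delta'$ and any $j \geq J$, covering $E^{(p)}(h+\varepsilon)$ by the dyadic intervals in $\bigcup_{j' \geq j} F_{j'}^{(p)}(h+\varepsilon)$, each of diameter $2^{-j'}$, yields
\[
\mathcal{H}^s_{2^{-j}}\!\bigl(E^{(p)}(h+\varepsilon)\bigr) \leq \sum_{j' \geq j} 2^{(\nu_{\vec{c}}^{(p)}(h) + \delta + \delta' - s)j'},
\]
which is a convergent geometric tail vanishing as $j \to +\infty$. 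Hence $\mathcal{H}^s(E^{(p)}(h+\varepsilon)) = 0$, so $\dim_{\mathcal{H}} E^{(p)}(h+\varepsilon) \leq s$. Letting $s$ decrease to $\nu_{\vec{c}}^{(p)}(h) + \delta + \delta'$ and then $\delta, \delta' \to 0^+$ concludes.

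The argument is essentially a routine large-deviation covering estimate, closely parallel to the reasoning behind Proposition~\ref{spectre_densite}; no serious obstacle is anticipated. The only mild subtlety lies in the order of limits, which is handled by first choosing $\varepsilon$ to control the cardinality exponent up to $\delta$, and only then selecting $J$ and $s$ accordingly so as to close the Hausdorff computation.
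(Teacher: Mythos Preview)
Your argument is correct and matches the approach the paper indicates: the paper states that Lemma~\ref{spectre_croissant} is an adaptation of Proposition~\ref{spectre_densite} via Lemma~\ref{lm_spectre_densite}, and this is precisely what you carry out, using the inclusion $\{x:h_f^{(p)}(x)\leq h\}\subseteq E^{(p)}(h+\varepsilon)$ followed by the standard dyadic covering estimate controlled by $\nu_{\vec{c}}^{(p)}$.
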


\begin{lemma}\label{enveloppep}
For every $\alpha\in\R$,
\[
\nu_{\vec{c}}^{(p)}(\alpha)\leq\sup_{\alpha'\leq \alpha} 
\rho_{\vec{c}}^{(p)}(\alpha').
\]
\end{lemma}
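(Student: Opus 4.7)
The plan is to mimic the proof of Equation~\eqref{eq:enveloppe}, adapted to $p$-leaders. The strategy is to decompose the set defining $\nu_{\vec{c}}^{(p)}(\alpha)$ into finitely many ``slabs'' indexed by the exponent of the $p$-leader, and then pass to the limit via a compactness argument.

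First, I would reduce to the nontrivial case $\nu_{\vec{c}}^{(p)}(\alpha)\in\ccint{0,1}$ and identify a threshold $\alpha_{\mathrm{lower}}<\alpha$ for which $\nu_{\vec{c}}^{(p)}(\alpha_{\mathrm{lower}})=-\infty$, meaning that eventually all $p$-leaders satisfy $l_\lambda^{(p)}<2^{-\alpha_{\mathrm{lower}} j}$. Such an $\alpha_{\mathrm{lower}}$ exists because, by the equality $\nu_{\vec{c}}^{(p)}=\nu_{\vec{c}}^{(p),\ast}$ noted after Proposition~\ref{class_restr} combined with the argument of Lemma~\ref{sup_B}, one has $\nu_{\vec{c}}^{(p)}(h)\leq hp+1$, which forces $\nu_{\vec{c}}^{(p)}(h)=-\infty$ for $h$ sufficiently negative. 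Then, fixing $\varepsilon>0$ small, I would partition the range $\ccint{\alpha_{\mathrm{lower}},\alpha+\varepsilon}$ into $N=N(\varepsilon)$ sub-intervals of length $2\varepsilon$ starting from the right, so that the resulting midpoints $\gamma_1^{\ast}(\varepsilon),\dots,\gamma_N^{\ast}(\varepsilon)$ all lie in $\ccint{\alpha_{\mathrm{lower}},\alpha}$. For $j$ large, every $\lambda\in\Lambda_j$ with $l_\lambda^{(p)}\geq 2^{-(\alpha+\varepsilon)j}$ falls into one of the corresponding slabs, giving
\[
\#\left\{\lambda\in\Lambda_j:l_\lambda^{(p)}\geq 2^{-(\alpha+\varepsilon)j}\right\}\leq \sum_{k=1}^{N}\#\left\{\lambda\in\Lambda_j:2^{-(\gamma_k^{\ast}+\varepsilon)j}\leq l_\lambda^{(p)}\leq 2^{-(\gamma_k^{\ast}-\varepsilon)j}\right\}.
\]
Since $N$ is finite for each fixed $\varepsilon$, taking $\log_2$, dividing by $j$, and passing to $\limsup_j$ yields
\[
g(\varepsilon)\coloneqq\limsup_{j\to+\infty}\frac{\log_2\#\{\lambda\in\Lambda_j:l_\lambda^{(p)}\geq 2^{-(\alpha+\varepsilon)j}\}}{j}\leq \max_{k=1,\dots,N(\varepsilon)}G\bigl(\gamma_k^{\ast}(\varepsilon),\varepsilon\bigr),
\]
where $G(\alpha',\varepsilon')$ denotes the $\limsup_j$ of the normalized log-count in the slab centered at $\alpha'$ with radius $\varepsilon'$, so that $\rho_{\vec{c}}^{(p)}(\alpha')=\lim_{\varepsilon'\to 0^+}G(\alpha',\varepsilon')$.

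The main difficulty is that the right-hand side only involves $G(\gamma_k^{\ast},\varepsilon)$ with $\varepsilon$ fixed, while $\rho_{\vec{c}}^{(p)}(\gamma_k^{\ast})$ requires $\varepsilon'\to 0$, so one cannot replace $G$ by $\rho_{\vec{c}}^{(p)}$ directly. To bridge this, I would use a compactness/diagonal argument: along any sequence $\varepsilon_n\to 0^+$, let $k_n^{\ast}$ attain the maximum on the right and extract, by compactness of $\ccint{\alpha_{\mathrm{lower}},\alpha}$, a subsequence along which $\gamma_{k_n^{\ast}}^{\ast}(\varepsilon_n)\to\bar\alpha\in\ccint{\alpha_{\mathrm{lower}},\alpha}$. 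For every $\delta>0$, once $n$ is large enough so that $\ccint{\gamma_{k_n^{\ast}}^{\ast}-\varepsilon_n,\gamma_{k_n^{\ast}}^{\ast}+\varepsilon_n}\subseteq\ccint{\bar\alpha-\delta,\bar\alpha+\delta}$, monotonicity of the count in the slab width gives $G(\gamma_{k_n^{\ast}}^{\ast}(\varepsilon_n),\varepsilon_n)\leq G(\bar\alpha,\delta)$. Letting first $n\to+\infty$ and then $\delta\to 0^+$ yields $\limsup_n g(\varepsilon_n)\leq \rho_{\vec{c}}^{(p)}(\bar\alpha)\leq \sup_{\alpha'\leq\alpha}\rho_{\vec{c}}^{(p)}(\alpha')$. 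Since $g(\varepsilon_n)\to\nu_{\vec{c}}^{(p)}(\alpha)$ by definition of the $p$-leader profile, the desired inequality follows.
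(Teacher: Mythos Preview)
Your proposal is correct and follows the same slab-decomposition and compactness approach that the paper implicitly refers to, since the paper gives no explicit proof but simply points to the argument for Equation~\eqref{eq:enveloppe} (itself deferred to \cite{largeDev}). You have filled in the omitted details faithfully, including the correct justification for the existence of $\alpha_{\mathrm{lower}}$ via Lemma~\ref{sup_B} and the equality $\nu_{\vec{c}}^{(p)}=\nu_{\vec{c}}^{(p),\ast}$, both of which are available in the ambient setting $\eta_f(p)>0$.
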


\begin{cor}\label{spectre_croissant2}
For every 
$h\geq\frac{-1}{p}$, we have
\[
\dim_{\mathcal{H}}\left\{x\in\ccint{0,1}:h_f^{(p)}(x)\leq h\right\}\leq \left(h+\frac{1}{p}\right)\sup_{\alpha\in\ocint{\frac{-1}{p},h}}\frac{\nu_{\vec{c}}(\alpha)}{\alpha+\frac{1}{p}}\leq \left(h+\frac{1}{p}\right)\sup_{\alpha\in\ocint{\frac{-1}{p},h}}\frac{\bm{\nu}(\alpha)}{\alpha+\frac{1}{p}}.
\]
\end{cor}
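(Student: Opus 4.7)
The plan is to chain the already-established estimates in the exact order suggested by the statement. First I would apply Lemma~\ref{spectre_croissant} to obtain
\[
\dim_{\mathcal{H}}\{x\in\ccint{0,1}: h_f^{(p)}(x)\leq h\}\leq \nu_{\vec{c}}^{(p)}(h),
\]
then Lemma~\ref{enveloppep} to get $\nu_{\vec{c}}^{(p)}(h)\leq \sup_{\alpha\leq h}\rho_{\vec{c}}^{(p)}(\alpha)$, and Proposition~\ref{class_restr} to replace $\rho_{\vec{c}}^{(p)}$ by $\rho_{\vec{c}}^{(p),\ast}$ inside this supremum. The main input is then Theorem~\ref{petitsoleil}, which bounds each $\rho_{\vec{c}}^{(p),\ast}(\alpha)$ by $(\alpha+1/p)\sup_{\beta\in\ocint{-1/p,\alpha}}\rho_{\vec{c}}(\beta)/(\beta+1/p)$.

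The key step is the monotonicity argument collapsing the outer supremum over $\alpha\leq h$ onto the endpoint $\alpha=h$. If the supremum equals $-\infty$, the first inequality is vacuous. Otherwise, since wavelet densities take values in $\{-\infty\}\cup\ccint{0,1}$ and $\alpha+1/p>0$ for $\alpha>-1/p$, Theorem~\ref{petitsoleil} forces the inner supremum on its right-hand side to be non-negative at every $\alpha$ witnessing the outer one. On that regime, $\alpha\mapsto(\alpha+1/p)\sup_{\beta\in\ocint{-1/p,\alpha}}\rho_{\vec{c}}(\beta)/(\beta+1/p)$ is a product of two non-negative, non-decreasing functions of $\alpha$, hence itself non-decreasing, so its supremum over $\alpha\leq h$ is attained at $\alpha=h$. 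The first inequality then follows after swapping $\rho_{\vec{c}}$ for $\nu_{\vec{c}}$ in the outer supremum using Equation~\eqref{eq:enveloppe}, as already noted in the paragraph following the definition of $D_f^{(p)}$.

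For the second inequality, I invoke Corollary~\ref{eg_nu}: on a full-probability event, $\nu_{\vec{c}}(\alpha)=\bm{\nu}(\alpha)$ for $\alpha\geq h_{\min}$ and $\nu_{\vec{c}}(\alpha)=-\infty$ otherwise. In both cases $\nu_{\vec{c}}(\alpha)\leq \bm{\nu}(\alpha)$ pointwise, and since $\alpha+1/p>0$ on the range of the supremum, dividing preserves the inequality and yields the desired bound.

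The only non-routine point is the monotonicity bookkeeping in the first inequality, in particular the need to verify that the inner supremum on the right-hand side of Theorem~\ref{petitsoleil} is non-negative whenever the left-hand side is; once one restricts to the regime where all quantities are in $\ccint{0,1}$, the argument is essentially a product-of-monotone-functions calculation. I do not expect any further obstacle.
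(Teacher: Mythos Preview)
Your proposal is correct and follows exactly the chain the paper uses: Lemma~\ref{spectre_croissant}, Lemma~\ref{enveloppep}, Proposition~\ref{class_restr}, Theorem~\ref{petitsoleil}, and finally Corollary~\ref{eg_nu}. Your explicit monotonicity argument collapsing the outer supremum to $\alpha=h$ is a detail the paper leaves implicit, but it is precisely what is needed and your justification is sound.
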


\begin{proof}
This follows by applying in succession Lemma~\ref{spectre_croissant}, Lemma~\ref{enveloppep}, Proposition~\ref{class_restr}, Theorem~\ref{petitsoleil}, and Corollary~\ref{eg_nu}.
\end{proof}

\begin{lemma}\label{gauge}
Let $p<p_0$ and $h\in\ccint{h_{\min},h_{\max}^{(p)}}$. If there exists a gauge function $\xi$ satisfying
\[
\mathcal{H}^\xi\left(\left\{x\in\ccint{0,1}:h_f^{(p)}(x)\leq h\right\}\right)>0\quad\text{and}\quad \lim_{r\rightarrow0^+}\frac{\log\xi(r)}{\log r}\geq \left(h+\frac{1}{p}\right)\sup_{\alpha\in\ocint{\frac{-1}{p},h}}\frac{\bm{\nu}(\alpha)}{\alpha+\frac{1}{p}},
\]
then Equation~\eqref{aim_inf} is satisfied.
\end{lemma}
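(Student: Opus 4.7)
The plan is to show that the positive $\xi$-Hausdorff measure given by hypothesis on the sub-level set $\{x \in \ccint{0,1} : h_f^{(p)}(x) \leq h\}$ is in fact concentrated on the level set $\{x : h_f^{(p)}(x) = h\}$, and then to apply the gauge-function dimension criterion recalled in Section~\ref{secHaus} to this level set. Writing $s := (h+\tfrac{1}{p}) \sup_{\alpha \in \ocint{\frac{-1}{p},h}} \bm{\nu}(\alpha)/(\alpha+\tfrac{1}{p})$, the goal is thus to establish $\mathcal{H}^\xi\big(\{h_f^{(p)} < h\}\big)=0$, since subtracting this vanishing contribution from $\mathcal{H}^\xi\big(\{h_f^{(p)} \leq h\}\big)>0$ leaves $\mathcal{H}^\xi\big(\{h_f^{(p)} = h\}\big)>0$, and the gauge condition $\lim_{r\to 0^+}\log\xi(r)/\log r \geq s$ then directly delivers $\mathscr{D}_f^{(p)}(h) = \dim_{\mathcal{H}} \{h_f^{(p)} = h\} \geq s$, i.e., Equation~\eqref{aim_inf}.

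To carry out the vanishing step, I would decompose
\[
\{h_f^{(p)} < h\} = \bigcup_{n \in \N} \{h_f^{(p)} \leq h - 1/n\}
\]
and rely on countable subadditivity of $\mathcal{H}^\xi$, so that it suffices to prove $\mathcal{H}^\xi\big(\{h_f^{(p)} \leq h'\}\big) = 0$ for every $h' \in \ooint{\frac{-1}{p},h}$. For such $h'$, Corollary~\ref{spectre_croissant2} provides
\[
\dim_{\mathcal{H}} \{h_f^{(p)} \leq h'\} \;\leq\; \Big(h' + \tfrac{1}{p}\Big) \sup_{\alpha \in \ocint{\frac{-1}{p},h'}} \frac{\bm{\nu}(\alpha)}{\alpha + 1/p} \;\leq\; \frac{h'+1/p}{h+1/p}\,s,
\]
and since $h'<h$ the multiplicative factor is strictly less than $1$, yielding a dimension strictly less than $s$. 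The gauge hypothesis ensures that for any $\eta > 0$ one has $\xi(r) \leq r^{s-\eta}$ for all small $r$, hence $\mathcal{H}^\xi \leq \mathcal{H}^{s-\eta}$; choosing $\eta > 0$ small enough so that $s-\eta$ still exceeds $\frac{h'+1/p}{h+1/p}s$, one obtains $\mathcal{H}^{s-\eta}\big(\{h_f^{(p)} \leq h'\}\big) = 0$ from the definition of Hausdorff dimension, whence $\mathcal{H}^\xi\big(\{h_f^{(p)} \leq h'\}\big) = 0$ as desired.

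I expect the main obstacle to be technical rather than conceptual: one must carefully handle the passage from the restricted supremum over $\ocint{\frac{-1}{p},h'}$ in Corollary~\ref{spectre_croissant2} to the full supremum over $\ocint{\frac{-1}{p},h}$ defining $s$, and ensure that the strict gap $\tfrac{h'+1/p}{h+1/p}<1$ survives this majorization. A degenerate case also deserves attention, namely $s=0$, where the inequality $\dim_{\mathcal{H}}\{h_f^{(p)}\leq h'\} < s$ collapses; there, however, the conclusion $\mathscr{D}_f^{(p)}(h) \geq 0$ reduces to the non-emptiness of $\{h_f^{(p)} = h\}$, which is forced by $\mathcal{H}^\xi\big(\{h_f^{(p)}\leq h\}\big)>0$ together with Proposition~\ref{palier} at the endpoint $h_{\min}$ or Proposition~\ref{reg_max} controlling the other end.
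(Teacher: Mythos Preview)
Your proposal is correct and follows essentially the same route as the paper: decompose the sublevel set as $\{h_f^{(p)}\leq h\}\setminus\bigcup_n\{h_f^{(p)}\leq h-1/n\}$, use Corollary~\ref{spectre_croissant2} to show each strict sublevel set has zero $\mathcal{H}^\xi$-measure via the comparison $\xi(r)\leq r^{s-\eta}$, and then apply the gauge--dimension criterion to the level set. Concerning the degenerate case $s=0$, note that it occurs only at $h=h_{\min}$, where the strict sublevel sets $\{h_f^{(p)}<h_{\min}\}$ are in fact empty (Corollary~\ref{spectre_croissant2} with $\nu_{\vec{c}}(\alpha)=-\infty$ for $\alpha<h_{\min}$), so the argument goes through directly without invoking Propositions~\ref{palier} or~\ref{reg_max}.
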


\begin{proof}
We can write
\[
\left\{x\in\ccint{0,1}:h_f^{(p)}(x)=h\right\}
=\left\{x\in\ccint{0,1}:h_f^{(p)}(x)\leq h\right\}\setminus\bigcup_{n\in\N}\left\{x\in\ccint{0,1}:h_f^{(p)}(x)\leq h-\frac{1}{n}\right\}
\]
and define
\[
D_h=\left(h+\frac{1}{p}\right)\sup_{\alpha\in\ocint{\frac{-1}{p},h}}\frac{\bm{\nu}(\alpha)}{\alpha+\frac{1}{p}}.
\]
Clearly, for every $n\in\N$,
there exists $\varepsilon_n>0$ such that 
\[
\xi(r)<r^{D_{h-\frac{1}{n}}+\varepsilon_n}
\]
when $r$ is small enough. It follows that
\[
\mathcal{H}^\xi\left(\left\{x\in\ccint{0,1}:h_f^{(p)}(x)\leq h-\frac{1}{n}\right\}\right)=0,
\]
because otherwise we would have
\[
\dim_{\mathcal{H}}\left\{x\in\ccint{0,1}:h_f^{(p)}(x)\leq h-\frac{1}{n}\right\}\geq D_{h-\frac{1}{n}}+\varepsilon_n>D_{h-\frac{1}{n}}
\]
which contradicts Corollary~\ref{spectre_croissant2}. Finally, we obtain
\[
\mathcal{H}^\xi\left(\left\{x\in\ccint{0,1}:h_f^{(p)}(x)=h\right\}\right)
>0,
\]
which implies Equation~\eqref{aim_inf}.
\end{proof}

To construct this gauge function, we will rely on the following result, which corresponds to a simplified version of the general mass transference principle stated in \cite[Theorem 2.2]{edouard}. In our setting, the theorem is applied to the Lebesgue measure on $[0,1]$, which allows for a more straightforward formulation. Note that for every ball $B=B(x,r)$ in $\R^d$ and every $a>0$, $B^a$ stands for the ball centered in $x$ and of radius $r^a$, i.e. $B^a=B(x,r^a)$.

\begin{thm}\label{edouard}
Let $(B_n)_{n\in\N}$ be a sequence of balls of $\ccint{0,1}$ and $(\gamma_n)_{n\in\N}\in\coint{1,+\infty}^{\N}$ a sequence of contracting ratios. Let 
\[
s=\sup\left\{\frac{1}{\gamma}:\leb\left(\limsup_{k\,:\,\gamma_k\leq \gamma}B_k\right)=1\right\}.
\]
Then there exists a gauge function $\xi:\coint{0,+\infty}\rightarrow\coint{0,+\infty}$ such that
\[
\lim_{r\rightarrow0^+}\frac{\log\xi(r)}{\log r}=s \quad\text{and}\quad \mathcal{H}^\xi\left(\limsup_{n\rightarrow+\infty}B_n^{\gamma_n}\right)>0.
\]
\end{thm}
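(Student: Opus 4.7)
The plan is to derive the statement from the classical Beresnevich--Velani mass transference principle, adapting it to accommodate variable contraction ratios $\gamma_n$ and to upgrade a Hausdorff dimension lower bound into the existence of a genuine gauge $\xi$.

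\textbf{Reduction to a discrete family of thresholds.} I would first pick an increasing sequence $(s_m)_{m\in\N}$ with $s_m\uparrow s$ and $s_m<s$, set $\bar\gamma_m=1/s_m\downarrow 1/s$, and regroup the balls into ``layers''
\[
\mathcal{I}_m = \bigl\{n\in\N : \gamma_n\leq \bar\gamma_m\bigr\}.
\]
By the very definition of $s$, for every $m$ the sub-family $(B_k)_{k\in\mathcal{I}_m}$ satisfies $\leb(\limsup_{k\in\mathcal{I}_m} B_k)=1$. This discretizes the assumption into countably many full-measure limsup sets, each with a uniform contraction bound.

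\textbf{Applying the classical MTP layer by layer.} For each fixed $m$, the Beresnevich--Velani mass transference principle (in $\ccint{0,1}$, with ambient dimension $1$) transforms the full-measure property of $\limsup_{k\in\mathcal{I}_m} B_k$ into the statement
\[
\mathcal{H}^{s_m}\Bigl(\limsup_{k\in\mathcal{I}_m} B_k^{\bar\gamma_m}\Bigr)>0.
\]
Since for $k\in\mathcal{I}_m$ one has $\gamma_k\leq\bar\gamma_m$, hence $B_k^{\gamma_k}\supseteq B_k^{\bar\gamma_m}$, this already gives $\mathcal{H}^{s_m}(\limsup_n B_n^{\gamma_n})>0$ and therefore $\dim_{\mathcal H}\limsup_n B_n^{\gamma_n}\geq s$. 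What remains is the delicate upgrade to a gauge.

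\textbf{Upgrading to a gauge $\xi$.} I would build $\xi$ and certify $\mathcal{H}^\xi(\limsup_n B_n^{\gamma_n})>0$ through a Cantor-type construction coupled with the mass distribution principle. Inductively, at generation $m$, inside each surviving ball from generation $m-1$, the full-measure property of the $m$-th layer together with a Vitali/$5r$-disjointification argument lets me extract a finite pairwise disjoint family of balls $B_k^{\gamma_k}$ with $k\in\mathcal{I}_m$ whose radii are comparable (say, of order $r_m$). Choosing the cardinality $N_m$ of this family so that $N_m\asymp r_m^{-s_m}/(\text{parent count})$, I distribute total mass $1$ equally on the resulting Cantor-like subset $K\subseteq\limsup_n B_n^{\gamma_n}$. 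The gauge is then defined scale by scale so that $\xi(r)\asymp r^{s_m}$ on the band $r\in\ccint{r_{m+1},r_m}$, with interpolation in between; by construction $\log\xi(r)/\log r\to s$ as $r\to 0^+$. The mass distribution principle, applied to the measure just built, delivers $\mathcal{H}^\xi(K)>0$, hence the claim.

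\textbf{Main obstacle.} The hard part is synchronizing the Cantor construction with the varying ratios $\gamma_n$: one must guarantee that inside every surviving parent of generation $m-1$ there are enough balls $B_k^{\gamma_k}$ with $k\in\mathcal{I}_m$ to feed the next generation, while simultaneously controlling their radii tightly enough that $\xi$ has the correct logarithmic asymptotics. Handling the scales without ``jumps'' between consecutive layers, and treating the borderline balls whose $\gamma_k$ lies strictly between $\bar\gamma_{m-1}$ and $\bar\gamma_m$, is what makes the argument more subtle than a single application of Beresnevich--Velani.
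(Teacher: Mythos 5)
You should first be aware that the paper itself does not prove this statement: Theorem~\ref{edouard} is invoked as a ready-made tool, namely the general mass transference principle of \cite[Theorem 2.2]{edouard} specialized to the Lebesgue measure on $\ccint{0,1}$, so your attempt is a reconstruction of that external result rather than of an argument contained in the paper. Your first two steps are correct: since the set of values $\frac{1}{\gamma}$ for which $\leb\left(\limsup_{k:\gamma_k\leq\gamma}B_k\right)=1$ is an interval, every layer $\mathcal{I}_m$ carries a full-measure limsup, and Beresnevich--Velani applied to the balls $B_k^{\bar\gamma_m}$ (whose $s_m$-dilates are the original $B_k$, because $\bar\gamma_m s_m=1$) yields $\mathcal{H}^{s_m}\left(\limsup_n B_n^{\gamma_n}\right)>0$, hence $\dim_{\mathcal{H}}\limsup_n B_n^{\gamma_n}\geq s$. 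But this conclusion is strictly weaker than the theorem. A countable union $E=\bigcup_m E_m$ with $\dim_{\mathcal{H}}E_m=s_m\uparrow s$ (each $s_m<s$) has dimension $s$, yet \emph{no} gauge $\xi$ with $\log\xi(r)/\log r\to s$ can satisfy $\mathcal{H}^\xi(E)>0$: for each $m$ one eventually has $\xi(r)\leq r^{(s+s_m)/2}$ with $(s+s_m)/2>\dim_{\mathcal{H}}E_m$, so $\mathcal{H}^\xi(E_m)=0$ for every $m$. Thus the existence of a single gauge with exponent exactly $s$ is the entire content of the statement beyond the classical MTP, and that is precisely the part your proposal does not establish.

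Your Cantor construction, as written, contains steps that fail. First, the prescription $N_m\asymp r_m^{-s_m}/(\text{parent count})$ with mass distributed equally is geometrically impossible: the number of children available inside a parent of radius $r_{m-1}$ is dictated by the Vitali extraction, roughly $c_m r_{m-1}/t_m$ where $t_m$ is the \emph{unshrunk} radius and $r_m=t_m^{\bar\gamma_m}$ the shrunk one, so the child mass is forced to equal $r_m^{s_m}\cdot K_m$ with $K_m=\prod_{i\leq m}c_i^{-1}\prod_{i<m}r_i^{-(1-s_i)}>1$, a factor that grows with $m$. Consequently your gauge $\xi(r)\asymp r^{s_m}$ on the band $\ccint{r_{m+1},r_m}$ violates the mass distribution inequality $\mu(D)\lesssim\xi\left(\mathrm{diam}(D)\right)$ at the children themselves. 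The mechanism that repairs this, and which is the actual heart of the proof, is a \emph{torrential} choice of scales, $\modu{\log r_m}$ chosen after $r_1,\ldots,r_{m-1}$ so large that $\log K_m=o\left(\modu{\log r_m}\right)$; nothing in your sketch plays this role. Second, the ``comparable radii'' extraction is not automatic: Vitali produces disjoint balls at wildly different scales, and naive pigeonholing into dyadic radius classes only gives a class of proportion $\gtrsim 1/(\text{number of classes})$, which is unbounded, so a weighted pigeonhole is needed and the resulting $\modu{\log c_m}$ must again be absorbed by the scale choice. Third, within one layer the shrunk balls $B_k^{\gamma_k}$ have incomparable radii even when the $B_k$ do, since $\gamma_k$ ranges over $\coint{1,\bar\gamma_m}$; the fix is to build the Cantor set from the uniformly shrunk balls $B_k^{\bar\gamma_m}\subseteq B_k^{\gamma_k}$. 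These are exactly the issues you list under ``main obstacle'' and defer; deferring them means the crux of the theorem --- everything that distinguishes it from a single application of Beresnevich--Velani --- is left unproved.
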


In order to apply Theorem~\ref{edouard} to construct a gauge function as required in Lemma~\ref{gauge}, one needs to work with a limsup subset of 
$\left\{ x \in \ccint{0,1} : h_f^{(p)}(x) \leq h \right\}$. 
However, Corollary~\ref{E_exp_2} does not directly provide such a set, so a modification is required.
Consider $(\alpha_n)_{n\in\N}$ a sequence whose elements belong to $\ooint{h_{\min},+\infty}\setminus\mathcal{D}$ and which is dense in $\coint{h_{\min},+\infty}$. For every $h\geq h_{\min}$ and every $p<p_0$,
we set
\begin{equation}\label{eq:E}
  E_h^{(p)}=\limsup_{j\rightarrow+\infty}\bigcup_{n\leq j\,:\,\alpha_n\leq h}\;\bigcup_{k\in F_j(\alpha_n)}B\left(k2^{-j},2^{\delta_n^{(p)}\left(-j+\frac{4}{\bm{\nu}(\alpha_n)}\log_2j\right)}\right),  
\end{equation}
where
\[
\delta_n^{(p)}=\frac{\alpha_n+\frac{1}{p}}{h+\frac{1}{p}}.
\]
Note that the condition $n\leq j$ ensures that, at each scale $j$, a finite number of balls are taken into account in the definition of $E_h^{(p)}$, and therefore that it is a limsup set over $j$.

\begin{rem}\label{rem:h_min}
In the case $\bm{\nu}(h_{\min})>0$, we include $h_{\min}$ in the sequence $(\alpha_n)_{n\in\N}$. Consequently, Theorem~\ref{inf_spectre} also holds at $h = h_{\min}$, so that Proposition~\ref{palier} is encompassed by Theorem~\ref{inf_spectre}.
\end{rem}

\begin{prop}\label{intermed}
For every $h\geq h_{\min}$ and every $p<p_0$, 
\[
E^{(p)}_h\subseteq\left\{x\in\ccint{0,1}:h^{(p)}_f(x)\leq h\right\}.
\]
\end{prop}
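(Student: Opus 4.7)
The plan is to adapt the proof of Lemma~\ref{E_exp} to the limsup set $E_h^{(p)}$ defined in \eqref{eq:E}. Fix $x \in E_h^{(p)}$. By definition of the limsup, there exist sequences $j_m \to \infty$, $n_m \leq j_m$ with $\alpha_{n_m} \leq h$, and $k_m \in F_{j_m}(\alpha_{n_m})$ satisfying
\[
|x - k_m 2^{-j_m}| < 2^{-\delta_{n_m}^{(p)} j_m + \tfrac{4\delta_{n_m}^{(p)}}{\bm{\nu}(\alpha_{n_m})}\log_2 j_m}.
\]
Since $\alpha_{n_m} \in (h_{\min}, h]$ with $h_{\min} > -1/p$, one has $\delta_{n_m}^{(p)} \in [(h_{\min}+1/p)/(h+1/p),\, 1]$. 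A pigeonhole extraction allows us to assume that either $n_m = n^*$ is eventually constant, or that $\alpha_{n_m} \to \alpha^* \in [h_{\min}, h]$; in both cases (setting aside the degenerate subcase $\alpha^* = h_{\min}$ discussed below), the monotonicity of $\bm{\nu}$ combined with the standing assumption $\bm{\nu}(\alpha) > 0$ for $\alpha > h_{\min}$ yields $\bm{\nu}(\alpha_{n_m}) \geq v_0$ for some constant $v_0 > 0$.

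Next, set $j'_m := \lfloor \delta_{n_m}^{(p)} j_m - \tfrac{4\delta_{n_m}^{(p)}}{\bm{\nu}(\alpha_{n_m})}\log_2 j_m \rfloor$. Then $0 \leq j'_m \leq j_m$ for $m$ large, and $2^{-j'_m}$ dominates $|x - k_m 2^{-j_m}|$, so that $\lambda_{j_m, k_m} \subseteq 3\lambda_{j'_m}(x)$. The definition of the $p$-leader and the bound $|c_{j_m, k_m}| \geq 2^{-\alpha_{n_m} j_m}$ give
\[
l_{\lambda_{j'_m}(x)}^{(p)} \geq |c_{j_m, k_m}| \cdot 2^{-(j_m - j'_m)/p} \geq 2^{-(\alpha_{n_m} + 1/p)\, j_m + j'_m/p}.
\]
Taking logarithms, dividing by $-j'_m$, and using $\bm{\nu}(\alpha_{n_m}) \geq v_0$ to ensure $j_m/j'_m = (1/\delta_{n_m}^{(p)})\,(1 + o(1))$, one obtains
\[
\frac{\log_2 l_{\lambda_{j'_m}(x)}^{(p)}}{-j'_m} \leq \frac{\alpha_{n_m} + 1/p}{\delta_{n_m}^{(p)}}\,(1 + o(1)) - \frac{1}{p} \xrightarrow[m \to \infty]{} h.
\]
Since $j'_m \to \infty$, this bounds the liminf defining $h_f^{(p)}(x)$ by $h$, as required.

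The principal technical obstacle is the subcase $\alpha_{n_m} \to h_{\min}$ with $\bm{\nu}(\alpha_{n_m}) \to 0$, where the logarithmic correction in the definition of $j'_m$ is no longer negligible compared to $j_m$. This is handled by reselecting, at each scale $j_m$, the index $n_m$ within the nonempty set $\{n \leq j_m : \alpha_n \leq h \text{ and } x \in B(k2^{-j_m}, r_{n,j_m}) \text{ for some } k \in F_{j_m}(\alpha_n)\}$ in such a way that $\alpha_{n_m}$ stays bounded away from $h_{\min}$; the density of the $(\alpha_n)$ in $[h_{\min}, +\infty)\setminus\mathcal{D}$ together with the positivity of $\bm{\nu}$ on $(h_{\min}, +\infty)$ makes this reduction possible.
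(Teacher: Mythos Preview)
Your argument for the non-degenerate case is essentially the paper's: you recover $\lambda_{j_m,k_m}\subseteq 3\lambda_{j'_m}(x)$ from the ball containment, bound the $p$-leader from below by $|c_{j_m,k_m}|\,2^{-(j_m-j'_m)/p}$, and use the uniform lower bound on $\delta_{n_m}^{(p)}$ (coming from $\alpha_{n_m}>h_{\min}>-1/p$) together with $j_m/j'_m\to 1/\delta_{n_m}^{(p)}$ to conclude. This part is fine.

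The gap is in your handling of the degenerate subcase $\alpha_{n_m}\to h_{\min}$ with $\bm{\nu}(\alpha_{n_m})\to 0$. Your proposed ``reselection'' of the index $n_m$ does not work: if you replace $\alpha_{n_m}$ by some $\alpha_{n'}\in(h_{\min},h]$ bounded away from $h_{\min}$, then indeed $k_m\in F_{j_m}(\alpha_{n'})$ (since $F_{j_m}(\alpha)$ is increasing in $\alpha$), but the corresponding ball has radius $2^{\delta_{n'}^{(p)}(-j_m+4\log_2 j_m/\bm{\nu}(\alpha_{n'}))}$, and both effects --- a larger $\delta_{n'}^{(p)}$ multiplying $-j_m$, and a bounded $4/\bm{\nu}(\alpha_{n'})$ replacing the blowing-up $4/\bm{\nu}(\alpha_{n_m})$ --- make this new radius \emph{strictly smaller} than the original one for large $j_m$. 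Hence there is no reason for $x$ to lie in the reselected ball, and the density of $(\alpha_n)$ is of no help here. Nothing in the definition of $E_h^{(p)}$ guarantees that, at scale $j_m$, the index set $\{n\le j_m:\alpha_n\le h,\ x\in B(k2^{-j_m},r_{n,j_m})\text{ for some }k\}$ contains an $n$ with $\alpha_n$ bounded away from $h_{\min}$.

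The paper does not try to avoid small $\bm{\nu}(\alpha(j_m))$. Instead it keeps the original index and modifies the choice of the coarse scale by setting
\[
j'_m=\Big\lfloor \delta(j_m)\Big(j_m-\frac{4}{\bm{\nu}(\alpha(j_m))+\varepsilon}\log_2 j_m\Big)\Big\rfloor,
\]
so that the coefficient of $\log_2 j_m$ is uniformly bounded by $4/\varepsilon$, regardless of how small $\bm{\nu}(\alpha(j_m))$ is. Combined with the uniform lower bound on $\delta(j_m)$ (the paper derives it from $\eta_f(p)>\delta$, which is equivalent to your $h_{\min}>-1/p$), this forces $j'_m\to+\infty$ and yields the estimate
\[
l_{\lambda_{j'_m}(x)}^{(p)}\ge 2^{-(h+1/p)\frac{\delta(j_m)}{\delta(j_m)(1-\varepsilon)-\varepsilon}\,j'_m+\,j'_m/p},
\]
from which one lets $\varepsilon\to 0$ after passing to a subsequence with $\delta(j_m)\to l>0$. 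The point is that the $\varepsilon$-perturbation absorbs the possibly unbounded logarithmic correction at the cost of an error in the final exponent that vanishes with $\varepsilon$; no control on $\bm{\nu}(\alpha(j_m))$ from below is needed.
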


\begin{proof}
Let $h\geq h_{\min}$, $p<p_0$, $x\in E_h^{(p)}$ and $\delta>0$ be such that $\eta_f(p)>\delta$. By definition of $E_h^{(p)}$, there exists a sequence $(j_m)_{m\in\N}$ such that for every $m\in\N$, one can find $\alpha(j_m)\leq h$ and $k(j_m)\in\Lambda_{j_m}$ satisfying 
\[
\modu{x-k(j_m)2^{-j_m}}<2^{\delta(j_m)\left(-j_m+\frac{4}{\bm{\nu}(\alpha(j_m))}\log_2j_m\right)}\quad\text{and}\quad \modu{c_{j_m,k(j_m)}}\geq 2^{-\alpha(j_m)j_m},
\]
with
\[
\delta(j_m)=\frac{\alpha(j_m)+\frac{1}{p}}{h+\frac{1}{p}}.
\]
Moreover, there exists $J\in\N$ such that for every $j\geq J$ and every $\lambda\in\Lambda_j$
\[
\modu{c_\lambda}<2^{\frac{(1-\delta)j}{p}}.
\]
Together, these estimates imply that, for every $m\in\N$ such that $j_m\geq J$,
\[
\delta(j_m)>\frac{\delta}{hp+1}.
\]
Hence, the sequence $(\delta(j_m))_{n \in \mathbb{N}}$ is bounded from below by a strictly positive constant. Since it is also bounded from above by $1$, we can, up to extraction of a subsequence, assume that it converges to some $l \geq \frac{\delta}{h p + 1} > 0$. Proceeding as in Lemma~\ref{E_exp}, for each $\varepsilon > 0$ and each $m \in \mathbb{N}$, we define
\[
j'_m=\left\lfloor\delta(j_m) \left(j_m-\frac{4}{\bm{\nu}(\alpha(j_m))+\varepsilon}\log_2j_m\right)\right\rfloor,
\]
so that
$\lambda_{j_m,k(j_m)}\subseteq 3\lambda_{j'_m}(x)$ and, if $m$ is large enough,
\[
l_{\lambda_{j'_m}(x)}^{(p)}\geq 2^{-\alpha(j_m) j_n}2^{-\frac{j_m-j'_m}{p}}\geq 2^{-\left(h+\frac{1}{p}\right)\frac{\delta(j_m)}{\delta(j_m)(1-\varepsilon)-\varepsilon}j'_m}2^{\frac{j'_m}{p}}.
\]
Since $j'_m\rightarrow+\infty$ when $m\rightarrow+\infty$, we deduce that
\[
h_f^{(p)}(x)\leq\left(h+\frac{1}{p}\right)\frac{l}{l\,(1-\varepsilon)-\varepsilon}-\frac{1}{p},
\]
and the conclusion follows.
\end{proof}

We are finally able to prove the last expected result.

\begin{thm}\label{inf_spectre}
Almost surely, for all $p<p_0$ and all $h\in\ocint{h_{\min},h_{\max}^{(p)}}$, 
\[
\mathscr{D}^{(p)}_f(h)\geq \left(h+\frac{1}{p}\right)\sup_{\alpha\in\ocint{\frac{-1}{p},h}}\frac{\bm{\nu}(\alpha)}{\alpha+\frac{1}{p}}.
\]
\end{thm}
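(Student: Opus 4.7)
The strategy is to use the general mass transference principle of Theorem~\ref{edouard} on the limsup set $E_h^{(p)}$ introduced in Equation~\eqref{eq:E}, and then invoke Lemma~\ref{gauge}. So the first task is to recognize the balls appearing in the definition of $E_h^{(p)}$ as contracted balls $B_n^{\gamma_n}$ of suitably chosen base balls $B_n$. Given an index $(j,k,n)$ entering the definition, I would take
\[
B_{j,k,n}=B\!\left(k2^{-j},\,2^{-(\bm{\nu}(\alpha_n)-\varepsilon)j+2\log_2 j}\right)
\]
as the base ball, because these are precisely the balls whose limsup covers $\ccint{0,1}$ almost surely thanks to Proposition~\ref{leb1} (recall that $(\alpha_n)$ was chosen in $\ooint{h_{\min},+\infty}\setminus\mathcal{D}$). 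A direct logarithmic computation then fixes the contracting ratio $\gamma_{j,n}\simeq\delta_n^{(p)}/(\bm{\nu}(\alpha_n)-\varepsilon)$; the assumption $h\leq h_{\max}^{(p)}$ translates exactly into $\delta_n^{(p)}\geq\bm{\nu}(\alpha_n)$ for the admissible $\alpha_n$, so $\gamma_{j,n}\geq 1$ for $\varepsilon$ small enough, as required by Theorem~\ref{edouard}.

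With this set-up, since a single $\alpha_n$ already yields a sequence of base balls whose limsup has full Lebesgue measure on $\ccint{0,1}$, the value
\[
s=\sup\left\{\tfrac{1}{\gamma}:\leb\!\left(\limsup_{(j,k,n):\gamma_{j,n}\leq\gamma}B_{j,k,n}\right)=1\right\}
\]
of Theorem~\ref{edouard} satisfies $s\geq \frac{\bm{\nu}(\alpha_n)-\varepsilon}{\delta_n^{(p)}}=\left(h+\tfrac{1}{p}\right)\frac{\bm{\nu}(\alpha_n)-\varepsilon}{\alpha_n+\tfrac{1}{p}}$ for every admissible $n$. Using the density of $(\alpha_n)$ in $\coint{h_{\min},+\infty}\setminus\mathcal{D}$, the fact that $\bm{\nu}$ coincides with $\bm{\lambda}$ off $\mathcal{D}$, and letting $\varepsilon\to 0$, one obtains $s\geq D_h:=\left(h+\tfrac{1}{p}\right)\sup_{\alpha\in\ocint{-1/p,h}}\frac{\bm{\nu}(\alpha)}{\alpha+1/p}$. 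Theorem~\ref{edouard} then produces a gauge $\xi$ with $\lim_{r\to 0^+}\log\xi(r)/\log r = s\geq D_h$ and $\mathcal{H}^\xi(\limsup B_{j,k,n}^{\gamma_{j,n}})>0$; Proposition~\ref{intermed} transfers this to $\mathcal{H}^\xi(\{x\in\ccint{0,1}:h_f^{(p)}(x)\leq h\})>0$, and Lemma~\ref{gauge} delivers Equation~\eqref{aim_inf}, which, combined with $\bm{\nu}\geq\nu_{\vec{c}}$ and Corollary~\ref{eg_nu}, gives the stated lower bound.

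To secure a full-probability event valid uniformly in $p$ and $h$, I would intersect (over a countable dense sequence $(p_m)$ in $\ooint{0,p_0}$ and a countable family of rational thresholds approximating $h$) the almost sure events provided by Proposition~\ref{leb1} for each $\alpha_n$, together with those of Lemma~\ref{j_alpha}. The approximation from below in $p$ and the monotonicity of both the spectrum and the formalism (as already exploited in the proof of Proposition~\ref{reg_max}) then propagate the inequality to all $p<p_0$ and all $h\in\ocint{h_{\min},h_{\max}^{(p)}}$.

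The main obstacle is the bookkeeping of the two concurrent limits: passing $\varepsilon\to 0$ while simultaneously selecting a sequence of $\alpha_n$'s approaching the sup in $D_h$, all the while checking that the exponent $s$ produced by Theorem~\ref{edouard} reaches, and does not merely approach, $D_h$. This is what forces the ball parametrization to include the $\varepsilon$ in the radius $2^{-(\bm{\nu}(\alpha_n)-\varepsilon)j+2\log_2 j}$ and the $\frac{4}{\bm{\nu}(\alpha_n)}\log_2 j$ correction in the contracted radius: together they ensure that the non-trivial logarithmic terms are absorbed when $\gamma_{j,n}$ is compared with the target ratio, so that $1/\gamma_{j,n}$ truly converges to $\bm{\nu}(\alpha_n)/\delta_n^{(p)}$ and the sup over $n$ matches $D_h/(h+1/p)$ in the limit.
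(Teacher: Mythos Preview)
Your approach is essentially the paper's: apply Theorem~\ref{edouard} to the limsup set $E_h^{(p)}$ with base balls coming from Proposition~\ref{leb1}, then invoke Proposition~\ref{intermed} and Lemma~\ref{gauge}. The paper makes the contraction ratios constant in $j$ by setting $\gamma_n=\delta_n^{(p)}/(\bm{\nu}(\alpha_n)-\varepsilon_n)$ with an $n$-dependent $\varepsilon_n$ (chosen so that $2\varepsilon_n<\bm{\nu}(\alpha_n)$, which is exactly what forces the inclusion $B_{j,n,k}^{\gamma_n}\subseteq B\bigl(k2^{-j},2^{\delta_n^{(p)}(-j+\frac{4}{\bm{\nu}(\alpha_n)}\log_2 j)}\bigr)$). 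This is cleaner than your single $\varepsilon$ followed by $\varepsilon\to 0$: the gauge $\xi$ produced by Theorem~\ref{edouard} depends on the chosen $\gamma$'s, so you cannot pass to the limit after applying the theorem. With $\varepsilon_n$ tending to $0$ along suitable subsequences of $(\alpha_n)$, a \emph{single} application of the mass transference principle already yields $s\geq D_h$.

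Your last paragraph overcomplicates the uniformity in $p$ and $h$. The almost-sure event in the paper is simply $\Omega^\ast=\bigcap_n\{\text{Proposition~\ref{leb1} holds at }\alpha_n\}$, which depends on neither $p$ nor $h$: the sets $E(\alpha_n,\bm{\nu}(\alpha_n)-\varepsilon)$ involve only the wavelet coefficients and the fixed sequence $(\alpha_n)$. On $\Omega^\ast$, the remaining argument---including the application of Theorem~\ref{edouard}, which is a purely deterministic geometric statement---runs verbatim for every $p<p_0$ and every $h\in\ocint{h_{\min},h_{\max}^{(p)}}$. No dense sequence in $p$, no rational approximation in $h$, and no monotonicity argument is needed; the latter would in fact be delicate, since neither $\mathscr{D}_f^{(p)}(h)$ nor $D_h^{(p)}$ is obviously monotone in $p$.
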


\begin{proof}
Using Lemma \ref{gauge} and Proposition \ref{intermed}, the proof boils down to showing that, almost surely, for every $p<p_0$ and every $h\in\ocint{h_{\min},h_{\max}^{(p)}}$, there exists a gauge function $\xi$ such that
\[
\mathcal{H}^\xi\left(E_h^{(p)}\right)>0\quad\text{and}\quad \lim_{r\rightarrow0^+}\frac{\log\xi(r)}{\log r}\geq \left(h+\frac{1}{p}\right)\sup_{\alpha\in\ccint{h_{\min},h}}\frac{\bm{\nu}(\alpha)}{\alpha+\frac{1}{p}}.
\]
Fix $p<p_0$ and $h\in\ocint{h_{\min},h_{\max}^{(p)}}$. These values being fixed, in order not to overcomplicate the notations, we drop the indices. Recall first that $E$, defined from Equation~\eqref{eq:E}, can be viewed as a $\limsup$ set of balls
\[
B_{j,n,k}=B\left(k2^{-j},2^{\delta_n^{(p)}\left(-j+\frac{4}{\bm{\nu}(\alpha_n)}\log_2j\right)}\right)
\]
with $n\leq j$, $\alpha_n\leq h$ and $k\in F_j(\alpha_n)$. Now, for every such $n\in\N$, choose $\varepsilon_n>0$ such that $2\varepsilon_n<\bm{\nu}(\alpha_{n})$, and define
\[
\beta_n=\left(h+\frac{1}{p}\right)\frac{\bm{\nu}(\alpha_{n})-\varepsilon_n}{\alpha_{n}+\frac{1}{p}}.
\]
Notice that
\[
0<\left(h_{\min}+\frac{1}{p}\right)\frac{\bm{\nu}(\alpha_{n})-\varepsilon_n}{h_{\max}+\frac{1}{p}}\leq\beta_n\leq\left(h_{\max}+\frac{1}{p}\right)\frac{\bm{\nu}(\alpha_{n})}{\alpha_{n}+\frac{1}{p}}\leq1.
\]
It follows that $\gamma_n=\frac{1}{\beta_n}$ is well-defined, larger or equal to 1, and satisfies
\[
B_{j,n,k}\supseteq B\left(k2^{-j},2^{-(\bm{\nu}(\alpha_{n})-\varepsilon_n)j+2\log_2j}\right)^{\gamma_n}
\]
for every $n\in\N$. For every $(j,n,k)$, we set $\gamma_{j,n,k}=\gamma_n$. We only need to construct a full probability event $\Omega^\ast$ independent of $h$ and $p$, on which
\small\[
s\coloneqq\sup\left\{\frac{1}{\gamma}:\leb\left(\limsup_{(j,n,k)\,:\,\gamma_{j,n,k}\leq\gamma}B\left(k2^{-j},2^{-(\bm{\nu}(\alpha_{n})-\varepsilon_n)j+2\log_2j}\right)\right)=1\right\}\geq\left(h+\frac{1}{p}\right)\sup_{\alpha\in\ccint{h_{\min},h}}\frac{\bm{\nu}(\alpha)}{\alpha+\frac{1}{p}}.
\]\normalsize
Notice that
\begin{align*}
s 
& \geq\sup\left\{\beta_n:\leb\left(\limsup_{j\rightarrow+\infty}\bigcup_{k\in F_j(\alpha_{n})}B\left(k2^{-j},2^{-(\bm{\nu}(\alpha_{n})-\varepsilon_n)j+2\log_2j}\right)\right)=1\right\} \\
& =\sup\left\{\beta_n:\leb(E(\alpha_{n},\bm{\nu}(\alpha_{n})-\varepsilon_n))=1\right\}.
\end{align*}
In view of Proposition \ref{leb1}, there exists a full probability event $\Omega^\ast$ independent of $h$ and $p$ such that for every $n\in\N$ and every $\varepsilon>0$ satisfying $\bm{\nu}(\alpha_n)>\varepsilon$,
\[
\leb(E(\alpha_n,\bm{\nu}(\alpha_n)-\varepsilon))=1.
\]
It follows that, on $\Omega^\ast$, 
\[
s\geq \left(h+\frac{1}{p}\right)\sup_{n\in\N}\frac{\bm{\nu}(\alpha_{n})-\varepsilon_n}{\alpha_{n}+\frac{1}{p}}=\left(h+\frac{1}{p}\right)\sup_{\alpha\in\ccint{h_{\min},h}}\frac{\bm{\nu}(\alpha)}{\alpha+\frac{1}{p}}
\]
as expected.
\end{proof}

\section{Prevalent $p$-spectrum in $S^\nu$ spaces}\label{secSnutot}

In this section, we prove the generic optimality of the $p$-large deviation wavelet formalism, that is, Theorem~\ref{mainPrev}, within the spaces $S^\nu$. To this end, we begin by recalling the notion of prevalence, as well as the definition of the spaces $S^\nu$.

\subsection{Prevalence and $S^\nu$ spaces}\label{secSnu}

The notion of prevalence is intended to describe which sets may be considered as \textit{large} in a measure-theoretic sense. In $\R^n$, a set is typically called \textit{small} if its Lebesgue measure is null. However, the only locally finite and translation-invariant measure defined on the Borel subsets of an infinite dimensional Banach space is the trivial measure. The notion of \textit{prevalence} was independently introduced by Christensen (\cite{christensen}) and Hunt, Sauer and Yorke (\cite{hunt}) in order to compensate for the lack of such a measure. More precisely, it naturally generalizes the class of null Lebesgue measure sets without the use of a particular measure. 

\begin{defi}
A non-trivial measure $\mu$ defined on the Borel subset of a 
Polish space $X$ is said to be \textit{transverse} to a Borel subset $B$ of $X$ if $\mu(B+x)=0$ for every $x\in X$.
Furthermore, a Borel subset $B$ of $X$ is said to be \textit{shy} if there exists a measure that is transverse to $B$. More generally, a subset of $X$ is \textit{shy} if it can be included in a shy Borel subset of $X$. Moreover, a subset of $X$ is said to be \textit{prevalent} if its complement is shy.
\end{defi}

Let us now recall the definition and some properties of $S^\nu$ spaces introduced in \cite{jaffardBeyond} (see also \cite{aubryTopological}). We consider an admissible profile $\nu$, i.e. a function
\[
\nu:\R\rightarrow\{-\infty\}\cup\ccint{0,1}
\]
which is non-decreasing, right-continuous and satisfies
\[
\alpha_{\min}\coloneqq\inf\{\alpha\in\R:\nu(\alpha)\geq 0\}\in\R.
\]
In this case, $\nu(\alpha)=-\infty$ for every $\alpha<\alpha_{\min}$ and $\nu(\alpha)\geq 0$ for every $\alpha\geq \alpha_{\min}$.

\begin{defi}
The space $S^\nu$ is the set of functions $f$ whose sequence of wavelet coefficients $\vec{c}$ satisfies the following property: for every $\alpha \in \mathbb{R}$, every $\varepsilon > 0$, and every $C > 0$, there exists $J \in \mathbb{N}$ such that
\[
\#\left\{k\in\{0,\ldots,2^j-1\}:\modu{c_{j,k}}\geq C2^{-\alpha j}\right\}\leq 2^{(\nu(\alpha)+\varepsilon)j}\, , \quad\forall j\geq J.
\]
\end{defi}

In other words, $S^\nu$ is the space of functions $f$ whose wavelet coefficient sequence $\vec{c}$ satisfies $\nu_{\vec{c}}(\alpha) \leq \nu(\alpha)$ for every $\alpha \in \mathbb{R}$. This space can be shown to be robust (i.e., independent of the choice of a regular wavelet basis used to compute the coefficients), vectorial, metric, complete, and separable \cite{aubryTopological}.
Hence, it is suitable for the study of generic properties. Moreover, it is known from \cite{aubryPrevalence} that the set of sequences $\vec{c} \in S^\nu$ for which $\nu_{\vec{c}} = \nu$ is prevalent.

\subsection{Proof of Theorem~\ref{mainPrev}}\label{secpSnu}

We fix an admissible profile $\nu$ such that $\nu(\alpha) > 0$ for all $\alpha > \alpha_{\min}$.
 We set
\begin{equation}\label{eq:pnu}
p_\nu=\inf_{\alpha\in\coint{\alpha_{\min},0}}\frac{\nu(\alpha)-1}{\alpha},
\end{equation}
and we assume $\nu(0)<1$ if $\alpha_{\min}\leq 0$. 
In this case, using the properties
\[
\eta_f(p)=\sup\left\{s\in\R:\vec{c}\in b_{p,\infty}^{\frac{s}{p}}\right\} 
\]
(see \cite{Jaffard1997}) and
\[
S^\nu\subseteq \bigcap_{\varepsilon>0}b_{p,\infty}^{\frac{\eta(p)}{p}-\varepsilon},\quad\text{with}\quad
\eta(p)=\inf_{\alpha\geq\alpha_{\min}}(\alpha p-\nu(\alpha)+1)
\]
(see \cite{aubryTopological}), it can be shown that $\eta_f(p)>0$ for all $p<p_\nu$ and all $f\in S^\nu$, as required.

\medskip

To establish Theorem~\ref{mainPrev}, we rely on the fact that a property $\mathcal{P}$ in a Polish space $E$ is prevalent if one can construct a process $X$ which has almost surely its values in $E$ and such that $X+f$ satisfies $\mathcal{P}$ for all $f\in E$. Indeed, in this case, the distribution of $X$ is transverse to the set of functions in $E$ which do not satisfy $\mathcal{P}$, and its complement is therefore prevalent.

Let us now construct such a process. It follows naturally from Section~\ref{secpRWS} to consider a specific type of Random Wavelet Series.

\begin{defi}
A RWS is said to be \textit{associated} to $\nu$ if 
\begin{itemize}
\item for every $\alpha\in\R$, one has
\[
\limsup_{j\rightarrow+\infty}\frac{1}{j}\log_2\left(2^j\bm{\rho_j}(\ocint{-\infty,\alpha})\right)=\nu(\alpha)
\]
i.e. $\nu=\bm{\nu}=\bm{\lambda}$,
\item $\nu(\alpha)\geq 0\Rightarrow2^j\bm{\rho_j}(\ocint{-\infty,\alpha})\geq j^2$ for every $j\in\N$.
\end{itemize}
\end{defi}

The existence of a RWS associated to $\nu$ is established in \cite{aubryPrevalence}, and some of the following properties are mentioned.

\begin{prop}\label{RWSnu}
If $f$ is a RWS associated to $\nu$, then, almost surely,
\begin{enumerate}
\item one has
\[
h_{\max}^{(p)}=\inf\left\{h>\frac{-1}{p}:h+\frac{1}{p}=\inf_{\alpha\in\ccint{\alpha_{\min},h}}\frac{\alpha+\frac{1}{p}}{\nu(\alpha)}\right\},
\] 
\item $f\in S^\nu$,
$\alpha_{\min}=h_{\min}$
and
$\nu_{\vec{c}}=\nu$,
\item for every $p<p_\nu$ and every $h\geq\frac{-1}{p}$, \[
\mathscr{D}_f^{(p)}(h)=\left\{\begin{array}{ll}
\displaystyle\left(h+\frac{1}{p}\right)\sup_{\alpha\in\ocint{\frac{-1}{p},h}}\frac{\nu(\alpha)}{\alpha+\frac{1}{p}} & \text{ if }h\leq h_{\max}^{(p)}, \\
-\infty & \text{ if }h>h_{\max}^{(p)}.
\end{array}\right.
\]
\end{enumerate}
\end{prop}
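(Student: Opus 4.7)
The plan is to dispatch the three items by reducing to the general random-wavelet-series results of Section~\ref{secpRWS}, once the identifications $h_{\min}=\alpha_{\min}$, $\nu_{\vec{c}}=\nu$ and $p_0\geq p_\nu$ have been set up.

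First, I would prove item 2. Corollary~\ref{eg_nu} gives, almost surely, $\nu_{\vec{c}}(\alpha)=\bm{\nu}(\alpha)=\nu(\alpha)$ for every $\alpha\geq h_{\min}$ and $\nu_{\vec{c}}(\alpha)=-\infty$ otherwise, so the task reduces to showing $h_{\min}=\alpha_{\min}$. The inequality $h_{\min}\geq\alpha_{\min}$ follows because, for $\alpha<\alpha_{\min}$, $\bm{\lambda}(\alpha)=\nu(\alpha)=-\infty$ forces an exponential decay of $2^j\bm{\rho_j}(\ocint{-\infty,\alpha+\varepsilon})$ for $\varepsilon>0$ small enough, so $\alpha\notin W$. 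For the reverse inequality, I would exploit both clauses in the definition of a RWS associated to $\nu$: the second yields $2^j\bm{\rho_j}(\ocint{-\infty,\alpha_{\min}+\varepsilon})\geq j^2$ for every $j$, while the first still forces exponential decay of $2^j\bm{\rho_j}(\ocint{-\infty,\alpha_{\min}-\varepsilon})$; thus $\sum_j 2^j\bm{\rho_j}(\ccint{\alpha_{\min}-\varepsilon,\alpha_{\min}+\varepsilon})=+\infty$ and $\alpha_{\min}\in W$. Combining, $\nu_{\vec{c}}=\nu$ almost surely, hence $f\in S^\nu$.

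Next, I would tackle item 1 by substituting $\nu$ for $\bm{\rho}$ in the defining relation of $h_{\max}^{(p)}$. Using Equation~\eqref{eq:enveloppe_th} together with the monotonicity of $\alpha\mapsto\alpha+1/p$, one checks
\[
\sup_{\alpha\in\ocint{-1/p,h}}\frac{\bm{\rho}(\alpha)}{\alpha+1/p}=\sup_{\alpha\in\ocint{-1/p,h}}\frac{\nu(\alpha)}{\alpha+1/p},
\]
then inverts, using $\nu=-\infty$ on $(-\infty,\alpha_{\min})$ and $\nu>0$ on $(\alpha_{\min},+\infty)$, to rewrite the condition as $h+1/p=\inf_{\alpha\in\ccint{\alpha_{\min},h}}\frac{\alpha+1/p}{\nu(\alpha)}$. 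Finally, for item 3, I would invoke Theorem~\ref{main_RWS}, whose hypothesis $p_0>0$ must be upgraded to $p_0\geq p_\nu$. This follows from the characterization $\eta_f(p)=\sup\{s\in\R:\vec{c}\in b_{p,\infty}^{s/p}\}$ of \cite{Jaffard1997} combined with the embedding $S^\nu\subseteq\bigcap_{\varepsilon>0}b_{p,\infty}^{\eta(p)/p-\varepsilon}$ of \cite{aubryTopological}: for $p<p_\nu$, the admissibility of $\nu$ and the assumption $\nu(0)<1$ when $\alpha_{\min}\leq 0$ yield $\eta(p)=\inf_{\alpha\geq\alpha_{\min}}(\alpha p-\nu(\alpha)+1)>0$, hence $\eta_f(p)>0$ almost surely. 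Theorem~\ref{main_RWS} then delivers the full $p$-spectrum on $\ccint{h_{\min},h_{\max}^{(p)}}$ via $\nu_{\vec{c}}=\nu$, with value $-\infty$ outside, as claimed.

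The principal obstacle is the identification $h_{\min}=\alpha_{\min}$, since it must reconcile the slow polynomial lower bound $j^2$ built into the definition of a RWS associated to $\nu$ with the exponential decay implied by $\nu=-\infty$ on the complement of $\ccint{\alpha_{\min},+\infty}$. A secondary, more routine, difficulty is the equivalence of the sup- and inf-formulations in item 1, where negative contributions to the supremum coming from $\alpha\leq-1/p$ must be discarded before one can legitimately invert.
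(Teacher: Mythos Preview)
Your proposal is correct and follows essentially the same route as the paper: the paper also derives item~2 from Corollary~\ref{eg_nu} together with the implication $\alpha_{\min}\in W$ (using $\bm{\nu}(\alpha_{\min}-\varepsilon)<0\leq\nu(\alpha_{\min})$, which is exactly your combination of the two clauses in the definition of a RWS associated to $\nu$), obtains item~1 from Equation~\eqref{eq:enveloppe_th} and $\nu=\bm{\nu}$, and deduces item~3 directly from Theorem~\ref{main_RWS}. The only difference is that you fold the verification $p_0\geq p_\nu$ into the proof, whereas the paper establishes $\eta_f(p)>0$ for all $f\in S^\nu$ and $p<p_\nu$ just before the proposition and then takes it for granted.
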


\begin{proof}
The first item follows from Equation~\eqref{eq:enveloppe_th} and  the identity $\nu=\bm{\nu}$. Next, Corollary~\ref{eg_nu} ensures that $\nu_{\vec{c}}\leq\bm{\nu}=\nu$ and $\bm{\nu}(h_{\min})\geq0$, hence $f\in S^\nu$ and $h_{\min}\geq\alpha_{\min}$. Moreover, for all $\varepsilon>0$, $\bm{\nu}(\alpha_{\min}-\varepsilon)<0\leq \nu(\alpha_{\min})$, which implies that $\alpha_{\min}$ belongs to $W$, and therefore $\alpha_{\min}\geq h_{\min}$. This is enough to assert 
$\nu_{\vec{c}}=\nu$, using again Corollary~\ref{eg_nu}. Once this property established, the third point follows directly from Theorem~\ref{main_RWS}.
\end{proof}

Choosing $X$ as a Random Wavelet Series associated to $\nu$ ensures that $X$ almost surely belongs to $S^\nu$ and has the required $p$-spectrum. To guarantee that these properties are preserved for $X+f$, we define
\[
X=\sum_{j\in\N_0}\sum_{k=0}^{2^j-1}C_{j,k}\psi_{j,k},\quad\text{where }C_{j,k}=\varepsilon_{j,k}\modu{C_{j,k}}
\]
with $\modu{C_{j,k}}$ chosen such that $X$ is a RWS associated to $\nu$ and $\varepsilon_{j,k}\iid\text{Rademacher}\left(\frac{1}{2}\right)$. Then, $X$ has its values in $S^\nu$ and for any fixed
\[
f=\sum_{j\in\N_0}\sum_{k=0}^{2^j-1}c_{j,k}\psi_{j,k}\in S^\nu,
\]
\[
X+f=\sum_{j\in\N_0}\sum_{k=0}^{2^j-1}(C_{j,k}+c_{j,k})\psi_{j,k}
\]
also has its values in $S^\nu$. Though $C_{j,k}+c_{j,k}$ and $C_{j,k'}+c_{j,k'}$ are independent, there are not necessarily identically distributed, and $X+f$ is not a RWS. It remains to show that the $p$-spectrum of $X+f$ complies with the formalism. To that end, we only need to prove that $X+f$ satisfies a version of Lemma~\ref{j_alpha}.

\begin{lemma}
Let $\alpha\geq\frac{-1}{p}$ be such that $\bm{\lambda}(\alpha)>0$. Almost surely, for every $\varepsilon>0$ satisfying $\bm{\lambda}(\alpha)>\varepsilon$, for infinitely many scales $j$ and for all $\lambda\in\Lambda_j$, the smallest scale $J_\alpha(\lambda)\geq j$ for which there exists $\lambda'\in\Lambda_{J_\alpha(\lambda)}$ such that $\lambda'\subseteq \lambda$ and $\modu{C_{\lambda'}+c_{\lambda'}}\geq 2^{-\alpha J_\alpha(\lambda)}$ satisfies
\[
J_\alpha(\lambda)\leq \left\lceil\frac{1}{\bm{\lambda}(\alpha)-\varepsilon}(j+2\log_2j)\right\rceil.
\]
\end{lemma}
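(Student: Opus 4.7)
The plan is to adapt the Borel--Cantelli argument from Lemma~\ref{j_alpha}, exploiting the sign randomization provided by the Rademacher factors $\varepsilon_{j,k}$ to absorb the deterministic perturbation by $c_{\lambda'}$. The only new ingredient is a lower bound on $\mathbb{P}(|C_{\lambda'} + c_{\lambda'}| \geq 2^{-\alpha J_n})$ that remains comparable to $\bm{\rho_{J_n}}(\ocint{-\infty,\alpha})$, and the proof of Lemma~\ref{j_alpha} then runs almost verbatim.

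The first step is a simple parallelogram identity. For any $a \geq 0$ and any $c \in \C$,
\[
|a+c|^2 + |-a+c|^2 = 2a^2 + 2|c|^2 \geq 2a^2,
\]
so at least one of $|a+c|, |-a+c|$ is $\geq a$. Applying this with $a = |C_{\lambda'}|$ and $c = c_{\lambda'}$ and using the independence of $\varepsilon_{\lambda'}$ from $|C_{\lambda'}|$, I obtain
\[
\mathbb{P}\bigl(|C_{\lambda'}+c_{\lambda'}| \geq |C_{\lambda'}| \bigm| |C_{\lambda'}|\bigr) \geq \tfrac{1}{2},
\]
hence $\mathbb{P}(|C_{\lambda'}+c_{\lambda'}| \geq 2^{-\alpha j'}) \geq \tfrac{1}{2}\bm{\rho_{j'}}(\ocint{-\infty,\alpha})$. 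Independence across $\lambda'$ at a fixed scale is preserved since both $(\varepsilon_{\lambda'})$ and $(|C_{\lambda'}|)$ are i.i.d.\ in $\lambda'$.

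The second step is to reproduce the argument of Lemma~\ref{j_alpha}: fix $M \geq 2$ with $\bm{\lambda}(\alpha) - \tfrac{1}{M} > 0$, pick a sequence $(J_n)$ with $2^{J_n}\bm{\rho_{J_n}}(\ocint{-\infty,\alpha}) > 2^{(\bm{\lambda}(\alpha)-1/M)J_n}$, and set
\[
j_n = \max\Bigl\{j \in \N : \bigl\lceil\tfrac{1}{\bm{\lambda}(\alpha)-1/M}(j+2\log_2 j)\bigr\rceil \leq J_n\Bigr\},
\]
where the key modification compared to Lemma~\ref{j_alpha} is the use of $j + 2\log_2 j$ in place of $j + \log_2 j$. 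Estimating the probability of the corresponding bad event $\tilde{A}_n$ by independence,
\[
\mathbb{P}(\tilde{A}_n) \leq 2^{j_n}\Bigl(1 - \tfrac{1}{2}\bm{\rho_{J_n}}(\ocint{-\infty,\alpha})\Bigr)^{2^{J_n-j_n}} \leq 2^{j_n}\exp\bigl(-\tfrac{1}{2}\cdot 2^{J_n-j_n}\bm{\rho_{J_n}}(\ocint{-\infty,\alpha})\bigr).
\]
By construction $2^{J_n - j_n}\bm{\rho_{J_n}}(\ocint{-\infty,\alpha}) \geq 2^{-j_n}\cdot 2^{(\bm{\lambda}(\alpha)-1/M)J_n} \geq j_n^2$, so $\mathbb{P}(\tilde{A}_n) \leq 2^{j_n}e^{-j_n^2/2}$, which is summable. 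Borel--Cantelli and an intersection over $M \in \N$ yield the conclusion.

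The main obstacle is simply the sign-randomization lemma and the careful bookkeeping of the factor $\tfrac{1}{2}$: without additional care, this factor would destroy the summability in Borel--Cantelli, which is precisely why the statement of the lemma enlarges the logarithmic correction from $\log_2 j$ to $2\log_2 j$. Everything else, including the independence structure and the final intersection over $M$, transports unchanged from Lemma~\ref{j_alpha}.
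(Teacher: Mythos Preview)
Your proof is correct and in fact more direct than the paper's. Both arguments rest on the same key observation---that the Rademacher sign gives $\mathbb{P}(|C_{\lambda'}+c_{\lambda'}|\geq|C_{\lambda'}|\mid|C_{\lambda'}|)\geq\tfrac12$---but they exploit it differently. The paper first applies Hoeffding's inequality to show that, with high probability, at least a third of the sub-intervals $\lambda'\subseteq\lambda$ at scale $J_n$ land in the random set $\Lambda_n^+(\lambda)=\{\lambda':|C_{\lambda'}+c_{\lambda'}|\geq|C_{\lambda'}|\}$; it then conditions on this event and uses that, on $\Lambda_n^+(\lambda)$, the implication $|C_{\lambda'}+c_{\lambda'}|<2^{-\alpha J_n}\Rightarrow|C_{\lambda'}|<2^{-\alpha J_n}$ reduces matters to the i.i.d.\ moduli. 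You instead combine the two pieces of information in a single line, bounding $\mathbb{P}(|C_{\lambda'}+c_{\lambda'}|\geq 2^{-\alpha J_n})\geq\tfrac12\bm{\rho_{J_n}}(\ocint{-\infty,\alpha})$ and then using independence of the events across $\lambda'$ directly. This bypasses the concentration step entirely; the factor $\tfrac12$ is absorbed by the extra $\log_2 j$ in the definition of $j_n$, exactly as you note. Your route is shorter and arguably cleaner, while the paper's decomposition into a ``good sign'' event and a ``good modulus'' event keeps the two sources of randomness more visibly separated.
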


\begin{proof}
Fix $M\geq 2$ such that $\bm{\lambda}(\alpha)-\frac{1}{M-1}>0$. We can fix sequences $(J_n)_{n\in\N}$ and $(j_n)_{n\in\N}$ similarly to Lemma~\ref{j_alpha}, i.e. such that for all $n\in\N$,
\[
2^{J_n}\bm{\rho_{J_n}}(\ocint{-\infty,\alpha})>2^{\left(\bm{\lambda}(\alpha)-\frac{1}{M}\right)J_n}
\]
and 
\[
j_n=\max\left\{j\in\N:\left\lceil\frac{1}{\bm{\lambda}(\alpha)-\frac{1}{M}}(j+2\log_2j)\right\rceil\leq J_n\right\}.
\]
For every $n\in\N$ and every $\lambda\in\Lambda_{j_n}$, we consider the sets
\[
\Lambda_n(\lambda)=\{\lambda'\in\Lambda_{J_n}:\lambda'\subseteq\lambda\},\quad\Lambda_n^+(\lambda)=\{\lambda'\in\Lambda_n(\lambda):\modu{C_{\lambda'}+c_{\lambda'}}\geq\modu{C_{\lambda'}}\},
\]
the random variable
\[
S_{n,\lambda}=\#\Lambda_n^+(\lambda)=\sum_{\lambda'\in\Lambda_n(\lambda)}X_{\lambda'}, \quad\text{where }X_{\lambda'}=\left\{\begin{array}{ll}
1 & \text{ if } \lambda'\in\Lambda_n^+(\lambda), \\
0 & \text{otherwise}
\end{array}\right.\quad\forall\lambda'\in\Lambda_n(\lambda),
\]
and the event
\[
B_{n,\lambda}=\left\{S_{n,\lambda}\geq \frac{2^{J_n-j_n}}{3}\right\}.
\]
We have
\[
\mathbb{E}[X_{\lambda'}]=\prob\left(\modu{C_{\lambda'}+c_{\lambda'}}\geq \modu{C_{\lambda'}}\right)\geq\prob(\varepsilon_{\lambda'}=\text{sgn}(c_{\lambda'}))+\mathbb{I}_{\{c_{\lambda'}=0\}}\geq\frac{1}{2}
\]
for all $\lambda'\in\Lambda_n(\lambda)$, hence
\[
\mathbb{E}[S_{n,\lambda}]\geq\frac{2^{J_n-j_n}}{2}
\]
for all $\lambda\in\Lambda_{j_n}$ and all $n\in\N$. Therefore, using the Hoeffding inequality,
\begin{align*}
\prob(B_{n,\lambda})\geq 1-\prob\left(\mathbb{E}[S_{n,\lambda}]-S_{n,\lambda}\geq \frac{2^{J_n-j_n}}{6}\right) \geq 1-2e^{-j_n}
\end{align*}
for every $n\in\N$ such that $2^{J_n-j_n}\geq 18j_n$. Now, for every $n\in\N$, we define the event
\[
A_n=\left\{\exists\lambda\in\Lambda_{j_n}\text{ s.t. }\forall j'\leq J_n\,\forall\lambda'\in\Lambda_{j'}\text{ with }\lambda'\subseteq \lambda,\, \modu{C_\lambda'+c_\lambda'}<2^{-\alpha j'}\right\}.
\]
As in Lemma~\ref{j_alpha}, it is enough to show that the series $\sum_{n\in\N}\prob(A_n)$ converges, which follows from the inequalities
\begin{align*}
\prob(A_n) 
& \leq \sum_{\lambda\in\Lambda_{j_n}}\prob\left(\forall \lambda'\in\Lambda_{J_n}\text{ with }\lambda'\subseteq\lambda,\,\modu{C_\lambda'+c_\lambda'}<2^{-\alpha J_n}\right) \\
& \leq \sum_{\lambda\in\Lambda_{j_n}}\prob\left(\forall \lambda'\in\Lambda_n(\lambda),\,\modu{C_\lambda'+c_\lambda'}<2^{-\alpha J_n}\,|\,B_{n,\lambda}\right)\prob(B_{n,\lambda})+2^{j_n+1}e^{-j_n} \\
& \leq \sum_{\lambda\in\Lambda_{j_n}}\prob\left(\forall \lambda'\in\Lambda_n^+(\lambda),\,\modu{C_{\lambda'}}<2^{-\alpha J_n}\,|\,B_{n,\lambda}\right)\prob(B_{n,\lambda})+2^{j_n+1}e^{-j_n} \\
& \leq 2^{j_n}\left(1-\bm{\rho_{J_n}}(\ocint{-\infty,\alpha})\right)^{\frac{2^{J_n-j_n}}{3}}+2^{j_n+1}e^{-j_n} \\
& \leq 2^{j_n}\exp\left(-\frac{2^{J_n-j_n}}{3}\bm{\rho_{J_n}}(\ocint{-\infty,\alpha})\right)+2^{j_n+1}e^{-j_n} \\
& \leq 2^{j_n}e^{\frac{-j_n^2}{3}}+2^{j_n+1}e^{-j_n}.
\end{align*}
\end{proof}

Once this lemma is established, the $p$-spectrum follows as in Section~\ref{secpRWS}. Note, however, that the lower bound thus provided is equivalently based on $\nu$, $\bm{\nu}$ or $\nu_{\vec{C}}$, but not on the profile $\nu_{\vec{C}+\vec{c}}$ of $X+f$, as would be required to ensure that $X+f$ satisfies the $p$-large deviation wavelet formalism. The prevalence of the set $\{f\in S^\nu:\nu_{\vec{c}}=\nu\}$ (stated in Section~\ref{secSnu}) is therefore required to conclude 
and to get Theorem \ref{mainPrev}, using the fact that the intersection of two prevalent sets is itself prevalent.

\bigskip

\noindent \textbf{Acknowledgements.} This work was supported by an FNRS grant awarded to T. Lambert. This project was also partially supported by the Tournesol program, a Partenariat Hubert Curien (PHC).  The authors thank E. Daviaud for fruitful discussions that greatly contributed to the development of this study.

\bibliographystyle{plain}
\bibliography{pspectrum.bib}

\newpage
\appendix
\section{}
\label{appA}

%\section{}\label{appA}

The aim of this appendix is to prove Proposition~\ref{propW}. To that end, we first recall the following lemma, originally established in \cite{RWS}, which is a direct consequence of the Borel–Cantelli lemma.

\begin{lemma}\label{coeff_series}
Let $a<b$. Almost surely, at infinitely many scales $j$, there exists $\lambda\in\Lambda_j$ satisfying 
\[
2^{-bj}\leq \modu{c_\lambda}\leq 2^{-aj}
\]
if and only if
\[
\sum_{j\in\N}2^j\bm{\rho_j}(\ccint{a,b})=+\infty.
\]
\end{lemma}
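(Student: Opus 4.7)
The plan is to recognise this as a straightforward Borel--Cantelli argument applied to the events
\[
A_j=\bigl\{\exists\,\lambda\in\Lambda_j:\ 2^{-bj}\leq\modu{c_\lambda}\leq 2^{-aj}\bigr\},\qquad j\in\N,
\]
and show that $\prob(\limsup_j A_j)=1$ iff $\sum_j 2^j\bm{\rho_j}(\ccint{a,b})=+\infty$. First I would observe that, by the definition of a Random Wavelet Series, the $2^j$ coefficients $(c_{j,k})_{k}$ are i.i.d.\ with $\prob(\modu{c_{j,k}}\in\ccint{2^{-bj},2^{-aj}})=\bm{\rho_j}(\ccint{a,b})=:p_j$, and that the families $(c_{j,\cdot})_{j\in\N}$ are independent across scales; consequently the events $A_j$ are mutually independent, with
\[
\prob(A_j)=1-(1-p_j)^{2^j}.
\]

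For the converse implication ($\Rightarrow$), I would note that $1-(1-p_j)^{2^j}\leq 2^j p_j$ by the standard inequality $1-(1-x)^n\leq nx$ on $\ccint{0,1}$. Hence if $\sum_j 2^j p_j<+\infty$, then $\sum_j\prob(A_j)<+\infty$, and the first Borel--Cantelli lemma yields $\prob(\limsup_j A_j)=0$, i.e.\ almost surely only finitely many scales $j$ carry a coefficient in the prescribed range.

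For the direct implication ($\Leftarrow$), I would use the lower bound $1-(1-p_j)^{2^j}\geq 1-\exp(-2^j p_j)$. Split into two cases: if $\limsup_j 2^j p_j>0$, then $\prob(A_j)\geq 1-e^{-c}$ for infinitely many $j$ and some $c>0$, so $\sum_j\prob(A_j)=+\infty$; otherwise $2^j p_j\to 0$, and using $1-e^{-x}\geq x/2$ for $x\in\ccint{0,1}$ one gets $\prob(A_j)\geq \tfrac{1}{2}\cdot 2^j p_j$ eventually, so again $\sum_j\prob(A_j)=+\infty$. By the independence of the $A_j$, the second Borel--Cantelli lemma gives $\prob(\limsup_j A_j)=1$.

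There is essentially no obstacle here beyond making the elementary inequalities precise; the only subtle point is to ensure the independence of the events $A_j$ is used in the second Borel--Cantelli lemma, which is justified by the across-scale independence of the wavelet coefficients in a RWS.
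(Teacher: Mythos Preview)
Your proof is correct and matches the paper's approach: the paper does not spell out an argument but simply states that the lemma is a direct consequence of the Borel--Cantelli lemma, and your write-up supplies precisely those details (computing $\prob(A_j)=1-(1-p_j)^{2^j}$, bounding it above by $2^j p_j$ and below by a constant multiple of $\min(1,2^j p_j)$, and invoking both directions of Borel--Cantelli using the across-scale independence of the wavelet coefficients).
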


\medskip

Let us now recall and prove Proposition~\ref{propW}.

\begin{prop}
The following properties are satisfied:
\begin{enumerate}
\item $\bm{\rho}(\alpha)>0\Rightarrow\alpha\in W$ and $\bm{\rho}(\alpha)<0\Rightarrow\alpha\notin W$,
\item almost surely, for every $\alpha\in\R$,
\[
\rho_{\vec{c}}(\alpha)=\left\{\begin{array}{ll}
\bm{\rho}(\alpha) & \text{ if }\alpha\in W, \\
-\infty & \text{ otherwise}.
\end{array}\right.
\]
\end{enumerate}
\end{prop}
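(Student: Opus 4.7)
The plan is to deduce the first item directly from the definitions of $\bm{\rho}$ and $W$, and to establish the second by combining a Markov/Borel--Cantelli upper bound with concentration-type lower bounds. Throughout, the randomness is controlled only on a countable family of rational bands, which then transfers to arbitrary real $\alpha$ via the obvious monotonicity of the counts in the band.

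For the first item, write $L(\alpha,\varepsilon):=\limsup_j \log_2(2^j\bm{\rho_j}(\ccint{\alpha-\varepsilon,\alpha+\varepsilon}))/j$, so that $\varepsilon\mapsto L(\alpha,\varepsilon)$ is non-decreasing and $\bm{\rho}(\alpha)=\inf_{\varepsilon>0}L(\alpha,\varepsilon)$. If $\bm{\rho}(\alpha)>0$, then for every $\varepsilon>0$ one has $2^j\bm{\rho_j}(\ccint{\alpha-\varepsilon,\alpha+\varepsilon})\geq 2^{\bm{\rho}(\alpha)j/2}$ at infinitely many scales $j$, so the series defining $W$ diverges and $\alpha\in W$. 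Conversely, if $\bm{\rho}(\alpha)<0$, picking $\varepsilon>0$ with $L(\alpha,\varepsilon)<\bm{\rho}(\alpha)/2<0$ produces geometric decay of the terms, so the series converges and $\alpha\notin W$.

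For the second item, the key observation is that $N_j(a,b):=\#\{k:2^{-bj}\leq\modu{c_{j,k}}\leq 2^{-aj}\}$ is a sum of $2^j$ independent Bernoulli random variables with mean $\mu_j(a,b):=2^j\bm{\rho_j}(\ccint{a,b})$. For the upper bound, Markov's inequality yields $\prob(N_j(a,b)>2^{(L(a,b)+\delta)j})\leq 2^{-\delta j/2}$ for $j$ large, which is summable for every rational triple $(a,b,\delta)$; Borel--Cantelli then provides a full-probability event on which $\limsup_j \log_2 N_j(a,b)/j\leq L(a,b)$ for every such band, together with $N_j(a,b)=0$ eventually whenever $\sum_j\mu_j(a,b)<\infty$. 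Sandwiching $\ccint{\alpha-\varepsilon,\alpha+\varepsilon}$ between suitable rational bands and letting $\varepsilon\to 0^+$ gives $\rho_{\vec{c}}(\alpha)\leq\bm{\rho}(\alpha)$ whenever $\bm{\rho}(\alpha)\geq 0$, and $\rho_{\vec{c}}(\alpha)=-\infty$ whenever $\alpha\notin W$.

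For the matching lower bound when $\alpha\in W$, two regimes arise. When $\bm{\rho}(\alpha)=0$, Lemma~\ref{coeff_series} applied to every rational band with divergent sum provides a full-probability event on which each such band contains at least one coefficient at infinitely many scales; choosing such a rational band strictly inside $\ccint{\alpha-\varepsilon,\alpha+\varepsilon}$ (which is possible because $\alpha\in W$ ensures divergence on all sub-bands of this radius) gives $\rho_{\vec{c}}(\alpha)\geq 0$. When $\bm{\rho}(\alpha)>0$, along the infinitely many scales at which $\mu_j(a,b)\geq 2^{(L(a,b)-\delta)j}$ the mean grows exponentially, so the Chernoff bound $\prob(N_j(a,b)<\mu_j(a,b)/2)\leq\exp(-\mu_j(a,b)/8)$ is summable along this subsequence, and Borel--Cantelli produces an almost-sure event on which $\limsup_j \log_2 N_j(a,b)/j\geq L(a,b)$; sandwiching rational bands $\ccint{\alpha-\varepsilon',\alpha+\varepsilon'}\subseteq\ccint{a,b}\subseteq\ccint{\alpha-\varepsilon,\alpha+\varepsilon}$ with $\varepsilon'<\varepsilon$ yields $\rho_{\vec{c}}(\alpha)\geq L(\alpha,\varepsilon')\geq\bm{\rho}(\alpha)$ after letting $\varepsilon\to 0^+$. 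The main obstacle is uniformity in $\alpha$: since $\alpha$ ranges over an uncountable set, one cannot directly intersect full-probability events indexed by $\alpha$; the resolution is to restrict the randomness to the countable collection of rational bands and exploit the monotonicity of $N_j(a,b)$ and $L(a,b)$ in $\ccint{a,b}$ to pass to arbitrary $\alpha\in\R$.
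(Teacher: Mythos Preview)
Your argument is correct and covers all the cases, but the technical route differs from the paper's. Both proofs split item~2 into the same four pieces (the case $\alpha\notin W$; the universal upper bound $\rho_{\vec{c}}\leq\bm{\rho}$; the lower bound when $\bm{\rho}(\alpha)>0$; and the lower bound $\rho_{\vec{c}}(\alpha)\geq 0$ when $\alpha\in W$, handled via Lemma~\ref{coeff_series}). The difference lies in how the uncountability of $\alpha$ is handled for the first two of these. You freeze a countable collection of rational bands, apply Markov and Chernoff to each band separately, intersect the resulting full-probability events, and then recover arbitrary $\alpha$ by the monotonicity of $N_j(a,b)$ and $L(a,b)$ in $\ccint{a,b}$. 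The paper instead works \emph{scale by scale}: at each scale $j$ it partitions the value axis into the $\sim j$ dyadic intervals $\lambda_{\lfloor\log_2 j\rfloor,k'}$ and uses a binomial concentration inequality (together with Bienaym\'e--Tchebychev for the small-mean cells) to control, on a single full-probability event, the counts $\#\{\lambda:X_\lambda\in\lambda_{\lfloor\log_2 j\rfloor,k'}\}$ \emph{uniformly} over all cells and all large $j$; arbitrary $\alpha$ is then handled by covering $\ccint{\alpha_m^-,\alpha_m^+}$ by finitely many of these cells. Your approach is more elementary (only Markov, Chernoff, and monotonicity are needed) and conceptually cleaner; the paper's scale-adaptive discretization yields a stronger intermediate statement---uniform two-sided control of all dyadic band counts at each scale---which is more than is required here but can be reused in other arguments. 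One minor point: your parenthetical justification ``$\alpha\in W$ ensures divergence on all sub-bands of this radius'' is loosely phrased; what you actually use is that any rational band $\ccint{a,b}$ containing some $\ccint{\alpha-\delta,\alpha+\delta}$ with $\delta<\varepsilon$ inherits divergence from the latter, which is exactly the content of $\alpha\in W$.
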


\begin{proof}
Since the first point is clear, we focus on the second item. 

First, let us establish that, almost surely, for every $\alpha\notin W$, $\rho_{\vec{c}}(\alpha)=-\infty$. Since $W$ is closed, $\R\setminus W$ can be written as
\[
\R\setminus W=\bigcup_{n\in\N}\ooint{\alpha_n,\beta_n},
\]
hence $\alpha\notin W$ if and only if there exist $n\in\N$ and $m\in\N$ such that $\frac{1}{m}<\frac{\beta_n-\alpha_n}{2}$ and
\[
\alpha\in\ooint{\alpha_n+\frac{1}{m},\beta_n-\frac{1}{m}}.
\]
Moreover, using the definition of $W$, if $\alpha\notin W$, then there exists $\varepsilon(\alpha)>0$ such that
\[
\sum_{j\in\N}2^j\bm{\rho_j}(\ccint{\alpha-\varepsilon(\alpha),\alpha+\varepsilon(\alpha)})<+\infty.
\]
For every $n\in\N$ and every $m\in\N$ large enough, one can find $\alpha^1,\ldots,\alpha^k\notin W$, such that the intervals $\ooint{\alpha^i-\varepsilon\left(\alpha^i\right),\alpha^i+\varepsilon\left(\alpha^i\right)}$ cover the compact interval $\ccint{\alpha_n+\frac{1}{m},\beta_n-\frac{1}{m}}$. It follows that for such $n$ and $m$,
\[
\sum_{j\in\N}2^j\bm{\rho_j}\left(\ccint{\alpha_n+\frac{1}{m},\beta_n-\frac{1}{m}}\right)\leq \sum_{i=1}^{k}\sum_{j\in\N}2^j\bm{\rho_j}\left(\ccint{\alpha^i-\varepsilon(\alpha^i),\alpha^i+\varepsilon(\alpha^i)}\right)<+\infty,
\]
and Lemma~\ref{coeff_series} claims that, almost surely, there exists $J(m,n)\in\N$ such that for every $j\geq J(m,n)$,
\[
\#\left\{\lambda\in\Lambda_j:2^{-\left(\beta_n-\frac{1}{m}\right)j}\leq \modu{c_\lambda} \leq 2^{-\left(\alpha_n+\frac{1}{m}\right)j}\right\}=0.
\]
The conclusion follows.

\medskip

Then, to show that, almost surely, for every $\alpha\in W$, $\rho_{\vec{c}}(\alpha)=\bm{\rho}(\alpha)$, we use the first item to divide the proof as follows:
\begin{itemize}
\item[(A)] almost surely, for every $\alpha\in\R$, $\rho_{\vec{c}}(\alpha)\leq\bm{\rho}(\alpha)$,
\item[(B)] almost surely, for every $\alpha\in\R$ such that $\bm{\rho}(\alpha)>0$, $\rho_{\vec{c}}(\alpha)\geq \bm{\rho}(\alpha)$,
\item[(C)] almost surely, for every $\alpha\in W$, $\rho_{\vec{c}}(\alpha)\geq 0$.
\end{itemize}
Note in addition that it is enough to consider $\alpha\in\ooint{0,1}$. Let us start with item (C). Fix $\varepsilon>0$ and consider a sequence $(\alpha_n)_{n\in\N}$ of $W$ such that
\[
W\subseteq \bigcup_{n\in\N}\ooint{\alpha_n-\varepsilon,\alpha_n+\varepsilon}.
\]
By definition of $W$, for every $n\in\N$,
\[
\sum_{j\in\N}2^j\bm{\rho_j}(\ccint{\alpha_n-\varepsilon,\alpha_n+\varepsilon})=+\infty.
\]
Using Lemma~\ref{coeff_series}, almost surely, for every $\alpha\in W$, there exists $n\in\N$ for which, at infinitely many scales $j$, there exists $\lambda\in\Lambda_j$ satisfying
\[
2^{-(\alpha+2\varepsilon)j}\leq 2^{-(\alpha_n+\varepsilon)j}\leq \modu{c_\lambda}\leq 2^{-(\alpha_n-\varepsilon)j}\leq 2^{-(\alpha-2\varepsilon)j},
\]
which allows to conclude by considering a sequence $(\varepsilon_n)_{n\in\N}$ that decreases to 0. Let us now move to properties (A) and (B). For every $j\in\N$, let $A_j^{(1)}$ and $A_j^{(2)}$ be respectively the events
\[
\left\{\exists k'\in\Lambda_{\lfloor \log_2j\rfloor}\text{ s.t. }\#\{\lambda\in\Lambda_j:X_\lambda\in\lambda_{\lfloor \log_2j\rfloor,k'}\}> j^3+\frac{3}{2}\cdot2^j\bm{\rho_j}(\lambda_{\lfloor \log_2j\rfloor,k'})\right\}
\]
and
\small\[
\left\{\exists k'\in\Lambda_{\lfloor \log_2j\rfloor}\text{ s.t. }\bm{\rho_j}(\lambda_{\lfloor\log_2j\rfloor,k'})\geq \frac{j^2}{2^j}\text{ and }\#\{\lambda\in\Lambda_j:X_\lambda\in\lambda_{\lfloor \log_2j\rfloor,k'}\}< \frac{1}{2}\cdot2^j\bm{\rho_j}(\lambda_{\lfloor \log_2j\rfloor,k'})\right\}.
\]\normalsize
Assume that
\begin{equation}\label{eq:propW}
\prob\left(\limsup_{j\rightarrow+\infty}A_j^{(1)}\right)=\prob\left(\limsup_{j\rightarrow+\infty}A_j^{(2)}\right)=0
\end{equation}
and let us show that this entails items (A) and (B). We know that, almost surely, there exists $J\in\N$ such that for every $j\geq J$, one has
\[
\#\{\lambda\in\Lambda_j:X_\lambda\in\lambda_{\lfloor \log_2j\rfloor,k'}\}\leq j^3+\frac{3}{2}\cdot2^j\bm{\rho_j}(\lambda_{\lfloor \log_2j\rfloor,k'}) \quad\forall k'\in\Lambda_{\lfloor \log_2j\rfloor},
\]
and
\[
\#\{\lambda\in\Lambda_j:X_\lambda\in\lambda_{\lfloor \log_2j\rfloor,k'}\}\geq \frac{1}{2}\cdot2^j\bm{\rho_j}(\lambda_{\lfloor \log_2j\rfloor,k'})\quad\forall k'\in\Lambda_{\lfloor \log_2j\rfloor} \text{ with } \bm{\rho_j}(\lambda_{\lfloor\log_2j\rfloor,k'})\geq \frac{j^2}{2^j}.
\]
On this full-probability event, fix $\alpha\in\ooint{0,1}$, $(\alpha_m^+)_{m\in\N}$ a non-increasing dyadic sequence of \mbox{$\ccint{0,1}\setminus\{\alpha\}$} which converges to $\alpha$, $(\alpha^-_m)_{m\in\N}$ a non-decreasing dyadic sequence of $\ccint{0,1}\setminus\{\alpha\}$ which converges to $\alpha$ and $m\in\N$. There exists $J'\in\N$ such that for every $j'\geq J'$, there exists $\mathcal{K}_{j'}\subseteq\Lambda_{j'}$ such that
\[
\ccint{\alpha_m^-,\alpha_m^+}=\bigcup_{k'\in\mathcal{K}_{j'}}\lambda_{j',k'}.
\]
Therefore, for every $j\geq\max\left(J,2^{J'}\right)$, if $j'=\lfloor\log_2j\rfloor$, we have
\begin{align*}
\frac{\log_2\left(\#\{\lambda\in\Lambda_j:2^{-\alpha_m^+j}\leq\modu{c_\lambda}\leq2^{-\alpha_m^-j}\}\right)}{j}
& \leq \frac{\log_2\left(\sum_{k'\in\mathcal{K}_{j'}}\left(j^3 + \frac{3}{2}\cdot2^j\bm{\rho_j}(\lambda_{j',k'})\right)\right)}{j} \\
& \leq \frac{\log_2\left(j^4+\frac{3}{2}\cdot 2^j\bm{\rho_j}\left(\ccint{\alpha_m^-,\alpha_m^+}\right)\right)}{j},
\end{align*}
and property (A) follows. If we assume in addition $\bm{\rho}(\alpha)>0$, then we can consider $\delta>0$, $\varepsilon>0$ and a sequence $(j_n)_{n\in\N}$ such that for every $n\in\N$, 
\[
2^{j_n}\bm{\rho_{j_n}}(\ccint{\alpha_m^-,\alpha_m^+})\geq2^{j_n}\bm{\rho_{j_n}}(\ccint{\alpha-\varepsilon,\alpha+\varepsilon})\geq 2^{\delta j_n}.
\]
Fix $J''\in\N$ such that for every $j\geq J''$, $2^{\delta j}\geq j^3$. It follows that for every $n\in\N$ such that $ j_n\geq \max\left(2^{J'},J''\right)$, 
\[
\sum_{k'\in\mathcal{K}_{\lfloor\log_2j_n\rfloor}}\bm{\rho_{j_n}}(\lambda_{\lfloor\log_2j_n\rfloor,k'})=\bm{\rho_{j_n}}(\ccint{\alpha_m^-,\alpha_m^+})\geq \frac{j_n^3}{2^{j_n}}.
\]
Then the subset 
\[
\mathcal{K}_{\lfloor\log_2j_n\rfloor}^+=\left\{k'\in\mathcal{K}_{\lfloor\log_2j_n\rfloor}:\bm{\rho_{j_n}}(\lambda_{\lfloor\log_2j_n\rfloor,k'})\geq \frac{j_n^2}{2^{j_n}}\right\}
\]
of $\mathcal{K}_{\lfloor\log_2j_n\rfloor}$ is non-empty. Therefore, for every $n\in\N$ such that $j_n\geq\max\left(J,2^{J'},J''\right)$, if \mbox{$j'_n=\lfloor\log_2j_n\rfloor$}, we have
\begin{align*}
\frac{1}{j_n}\log_2\left(\frac{1}{2} 2^{j_n}\bm{\rho_{j_n}}(\ccint{\alpha_m^-,\alpha_m^+})\right)
& \leq \frac{1}{j_n}\log_2\left(\sum_{k'\in\mathcal{K}^+_{j'_n}}\frac{1}{2} 2^{j_n}\bm{\rho_{j_n}}(\lambda_{j'_n,k'})+\frac{1}{2} j_n^3\right) \\
& \leq\frac{1}{j_n} \log_2\left(\sum_{k'\in\mathcal{K}^+_{j'_n}}\#\{\lambda\in\Lambda_{j_n}:X_\lambda\in\lambda_{j'_n,k'}\}+\frac{1}{2} j_n^3\right) \\
& \leq \frac{1}{j_n}\log_2\left(\#\{\lambda\in\Lambda_{j_n}:2^{-\alpha_m^+j_n}\leq\modu{c_\lambda}\leq2^{-\alpha_m^-j_n}\}+\frac{1}{2} j_n^3\right)
\end{align*}
and property (B) follows. It remains to show that Relation~\eqref{eq:propW} is true. Using Borel-Cantelli Lemma, it is enough to establish the convergence of the series
$\sum_{j\in\N}\prob\left(A_j^{(1)}\right)$ and $\sum_{j\in\N}\prob\left(A_j^{(2)}\right)$. Note that for every $j\in\N$, every $j'\leq j$ and every $\lambda'\in\Lambda_{j'}$, 
\[
\#\{\lambda\in\Lambda_j:X_\lambda\in \lambda'\}\sim \text{Bin}(2^j,\bm{\rho_j}(\lambda')),
\]
and recall that (see \cite{stute}) if $X\sim\text{Bin}(M,p)$ with $p\in\ooint{0,1}$, then for every $z>0$,
\begin{equation}\label{stute}
\prob(\modu{X-Mp}\geq z) \leq 2\exp\left(-Mp\left[\left(1+\frac{z}{Mp}\right)\ln\left(1+\frac{z}{Mp}\right)-\frac{z}{Mp}\right]\right).
\end{equation}
If $A_j^+$ and $A_j^-$ denote respectively the events
\[
\left\{\exists \lambda'\in\Lambda_{\lfloor \log_2j\rfloor}\text{ s.t. }\bm{\rho_j}(\lambda')\geq\frac{j^2}{2^j}\text{ and }\modu{2^j\bm{\rho_j}(\lambda')-\#\{\lambda\in\Lambda_j:X_\lambda\in\lambda'\}}> \frac{1}{2}\cdot2^j\bm{\rho_j}(\lambda')\right\}
\]
and
\[
\left\{\exists \lambda'\in\Lambda_{\lfloor \log_2j\rfloor}\text{ s.t. }\bm{\rho_j}(\lambda')\leq\frac{j^2}{2^j}\text{ and }\modu{2^j\bm{\rho_j}(\lambda')-\#\{\lambda\in\Lambda_j:X_\lambda\in\lambda'\}}> j^3\right\}
\]
then
\[
\prob\left(A_j^{(1)}\right)\leq\prob\left(A_j^+\right)+\prob\left(A_j^-\right)\quad\text{and}\quad \prob\left(A_j^{(2)}\right)\leq \prob\left(A_j^+\right).
\]
For every $j\in\N$ and every $\lambda'\in\Lambda_{\lfloor\log_2j\rfloor}$ such that $\bm{\rho_j}(\lambda')\geq\frac{j^2}{2^j}$, using the concentration inequality \eqref{stute} applied to $X=\#\{\lambda\in\Lambda_j:X_\lambda\in\lambda'\}$ and $z=\frac{1}{2}2^j\bm{\rho_j}(\lambda')$, we get
\begin{align*}
\prob\left(\modu{2^j\bm{\rho_j}(\lambda')-\#\{\lambda\in\Lambda_j:X_\lambda\in\lambda'\}}\geq\frac{1}{2} 2^j\bm{\rho_j}(\lambda')\right)
& \leq 2\exp\left(-2^j\bm{\rho_j}(\lambda')\left[\frac{3}{2}\ln\left(\frac{3}{2}\right)-\frac{1}{2}\right]\right) \\
& \leq 2\exp\left(-\frac{j^2}{10}\right)
\end{align*}
(if $\bm{\rho_j}(\lambda')=1$, the result is obvious since $\#\{\lambda\in\Lambda_j:X_\lambda\in\lambda'\}=2^j$ almost surely). Therefore, for every $j\in\N$,
\begin{align*}
\prob\left(A_j^+\right) 
\leq 2j\exp\left(-\frac{j^2}{10}\right).
\end{align*}
Moreover, for every $j\in\N$ and every $\lambda'\in\Lambda_{\lfloor\log_2j\rfloor}$ such that $\bm{\rho_j}(\lambda')\leq\frac{j^2}{2^j}$, using Bienaymé-Tchebychev inequality, we get
\begin{align*}
\prob\left(\modu{2^j\bm{\rho_j}(\lambda')-\#\{\lambda\in\Lambda_j:X_\lambda\in\lambda'\}}> j^3\right) \leq \frac{2^j\bm{\rho_j}(\lambda')(1-\bm{\rho_j}(\lambda'))}{j^6} \leq \frac{1}{j^4}.
\end{align*}
Therefore, for every $j\in\N$,
\[
\prob\left(A_j^-\right)\leq\frac{1}{j^3}.
\]
This is enough to conclude.
\end{proof}

\end{document}